\newtheorem {Lemma}{Lemma}[section]
\newtheorem {Theorem} {Theorem}[section]
\newtheorem {Corollary}{Corollary}[section]
\numberwithin{equation}{section}
\begin{document}

\title{On ABC spectral radius of uniform hypergraphs}

\author{
Hongying Lin$^{1}$\footnote{E-mail: linhy99@scut.edu.cn}, Bo Zhou$^{2}$\footnote{Corresponding author. E-mail: zhoubo@m.scnu.edu.cn}\\
$^{1}$School of Mathematics, South China  University of Technology, \\
Guangzhou 510641, P.R. China\\
$^{2}$School of  Mathematical Sciences, South China Normal University, \\
Guangzhou 510631, P.R. China}

\date{}
\maketitle

\begin{abstract}
Given a $k$-uniform hypergraph $G$ with vertex set $[n]$ and edge set $E(G)$, the ABC tensor $\mathcal{ABC}(G)$ of $G$ is the $k$-order $n$-dimensional tensor with
\[
 \mathcal{ABC}(G)_{i_1, \dots, i_k}=
 \begin{cases}
 \dfrac{1}{(k-1)!}\sqrt[k]{\dfrac{\sum_{i\in e}d_{i}-k}{\prod_{i\in e}d_{i}}} & \mbox{if $e=\{i_1, \dots, i_k\}\in E(G)$}\\
 0 & \mbox{otherwise}
\end{cases}
\]
for $i_j\in [n]$ with $j\in [k]$, where $d_i$ is the degree of vertex $i$ in $G$.
The ABC spectral radius of a uniform hypergraph is the spectral radius of its  ABC tensor.   We give tight lower and upper bounds for the ABC spectra radius, and determine  the maximum ABC spectral radii of uniform hypertrees,  uniform non-hyperstar hypertrees and  uniform non-power  hypertrees of given size, as well as the maximum ABC spectral radii of unicyclic uniform hypergraphs and linear unicyclic uniform hypergraphs of given size, respectively. We also characterize those uniform hypergraphs for which the maxima for the ABC spectral radii are actually attained in all cases.
\\ \\
{\bf Keywords:} ABC tensor,  ABC spectral radius, uniform  hypergraph, H-eigenvalue
\end{abstract}
%
%

\section{Introduction}

Given a positive integer $k\ge 2$, a  $k$-uniform hypergraph $G$ consists of  a finite set of vertices $V(G)$ a set of hyperedges (or simply edges) and $E(G)\subseteq 2^{V(G)}$ such that each edge contains exactly $k$ vertices. We call the numbers of vertices and edges of $G$ as the order and size of $G$, respectively.
A uniform hypergraph is a $k$-uniform hypergraph for some $k$.
A linear hypergraph is one in which every two distinct edges intersect in at most one vertex.

Let $G$ be a $k$-uniform hypergraph of order $n$ with vertex set $V(G)=[n]:=\{1, \dots, n\}$. For
$i\in V(G)$, denote by $E_i(G)$ the set of edges containing $i$, and
the degree of $i$ in $G$, denoted by $d_G(i)$ or simply $d_i$, is $|E_i(G)|$. The  hypergraph
$G$ is regular  if all the degree of its vertices are equal.
Assume that $E(G)\ne \emptyset$ for any hypergraph $G$ in this paper.

For integers $k$ and $n$  with $2\le k<n$, a $k$-order $n$-dimensional complex or real tensor (or hypermatrix)  $\mathcal{T}$ is a multidimensional array of $n^k$
elements of the form
$\mathcal{T}= (\mathcal{T}_{i_1,\dots, i_k})$, where $1 \le i_1, \dots, i_k\le n$.
A $k$-order $n$-dimensional
real tensor is said to be a nonnegative tensor if all
its entries are nonnegative.
For  a $k$-order
$n$-dimensional real tensor $\mathcal{T}$ and  an $n$-dimensional vector  ${\bf x}=(x_1,\dots, x_n)^\top$, the product
$\mathcal{T}{\bf x}^{k-1}$ is defined to be an $n$-dimensional vector so that  for $i\in [n]$,
\[
(\mathcal{T}{\bf x}^{k-1})_i=\sum_{i_2\in [n]}\dots \sum_{i_k\in [n]}
\mathcal{T}_{i,i_2,\dots, i_k}x_{i_2}\dots x_{i_k},
\]
while $\mathcal{T}{\bf x}^{k}$ is defined as the following homogeneous polynomial
\[
\mathcal{T}{\bf x}^{k}=\sum_{i_1\in [n]}\dots \sum_{i_k\in [n]}\mathcal{T}_{i_1,\dots, i_k}x_{i_1}\dots x_{i_k}.
\]
So $\mathcal{T}{\bf x}^{k}={\bf x}^\top (\mathcal{T}{\bf x}^{k-1})$. Let ${\bf x}^{[k]}=(x_1^k,\dots, x_n^k)^{\top}$.

Lim \cite{Lim} and Qi \cite{Qi05} proposed independently the concepts of eigenvalues and eigenvectors of a $k$-order $n$-dimensional real tensor $\mathcal{T}$.
A complex $\lambda$ is called an eigenvalue of $\mathcal{T}$,
if the system of homogeneous polynomial equations
\[
\mathcal{T}{\bf x}^{k-1}=\lambda {\bf x}^{[k-1]},
\]
i.e.,
\[
(\mathcal{T}{\bf x}^{k-1})_i=\lambda x_i^{k-1} \mbox{ for all $i\in [n]$}
\]
has a  nonzero solution  ${\bf x}$.
The vector ${\bf x}$ is called an eigenvector of $\mathcal{T}$ corresponding to $\lambda$, and the  equalities
\[
\sum_{i_2\in [n]}\dots \sum_{i_k\in [n]}\mathcal{T}_{i,i_2,\dots, i_k}x_{i_2}\dots x_{i_k}=\lambda x_i^{k-1}
\]
for $i=1, \dots, n$
are called the $(\lambda, {\bf x})$-eigenequations of $\mathcal{T}$. Moreover, if both $\lambda$ and ${\bf x}$ are real, then we call $\lambda$ an  $H$-eigenvalue and ${\bf x}$ an $H$-eigenvector of $\mathcal{T}$, see also \cite{Q2,Q3}.
The spectral radius  of $\mathcal{T}$ is the maximum  modulus of its eigenvalues, denoted by
$\rho(\mathcal{T})$.

Let $G$ be a $k$-uniform hypergraph of order $n$.
Recall that the adjacency tensor $\mathcal{A}(G)$ of $G$ is defined as \cite{CD}
\[
 \mathcal{A}(G)_{i_1, \dots, i_k}=
 \begin{cases}
\frac{1}{(k-1)!} & \mbox{if $\{i_1, \dots, i_k\}\in E(G)$},\\
 0 & \mbox{otherwise}.
\end{cases}
\]
Fix $i\in \{1, \dots, n\}$. If $\{i, i_2, \dots, i_k\}\in E(G)$, then
$\mathcal{A}(G)_{i, \tau(i_2), \dots, \tau(i_k)}=\frac{1}{(k-1)!}$ for any permutation $\tau$ in the symmetric group of degree $k-1$, and as there are $(k-1)!$ such permutations, one has
\[
\sum_{i_2,\dots, i_k\in [n]}\mathcal{A}(G)_{i, i_2, \dots, i_k}=\sum_{e\in E_i(G)}\frac{1}{(k-1)!}(k-1)!
=\sum_{e\in E_i(G)}1 =d_i.
\]
That is, the $i$-th row sum of  $\mathcal{A}(G)$ is just the degree of the $i$-th vertex of $G$. The ABC tensor $\mathcal{ABC}(G)$ of the $k$-uniform hypergraph $G$ as the $k$-order
$n$-dimensional tensor with entries
\[
 \mathcal{ABC}(G)_{i_1, \dots, i_k}=
 \begin{cases}
 \dfrac{1}{(k-1)!}\sqrt[k]{\dfrac{\sum_{i\in e}d_{i}-k}{\prod_{i\in e}d_{i}}} & \mbox{if $e=\{i_1, \dots, i_k\}\in E(G)$},\\
 0 & \mbox{otherwise}.
\end{cases}
\]
The term `ABC' is  abbreviated from atom-bond connectivity that comes from chemistry \cite{Est}. The ABC tensor of the hypergraph $G$ may be viewed as the
adjacency tensor  of an edge-weighted hypergraph $G_w$ in which an edge $e=\{i_1,\dots, i_k\}\in E(G)$ has weight $\sqrt[k]{\dfrac{\sum_{i\in e}d_{i}-k}{\prod_{i\in e}d_{i}}}$ based on the degrees of the vertices in the edge.
For a $k$-uniform hypergraph $G$, the ABC eigenvalues of $G$ are defined as the eigenvalues of its ABC tensor, and in particular,
the ABC spectral radius of $G$ is defined as the spectral radius of its ABC tensor, denoted by $\rho_{\mathcal{ABC}}(G)$. That is, $\rho_{\mathcal{ABC}}(G)=\rho(\mathcal{ABC}(G))$.
Recall that the spectral radius of a hypergraph $G$ is the spectral radius of its adjacency tensor of $G$, denoted by $\rho_{\mathcal{A}}(G)$, see, e.g., \cite{N1,N2}.

An ordinary graph is a just $2$-uniform hypergraph, so the ABC tensor of a graph $G$ is just the ABC  matrix of $G$, which was proposed by Estrada \cite{Est} in the context of molecular graphs based on early work, see, e.g. \cite{Est1}.
For edge $\{i,j\}$,
 $\frac{d_i+d_j-2}{d_id_j}$ is interpreted
as the probability of visiting  edge $ij$ from $i$ or $j$.
Such interpretation in the context of molecular graphs is related
to the polarizing capacity of the bond considered.
Since  the work of Estrada \cite{Est},
the spectral properties of the ABC matrix of a graph have received much attention,
see, e.g. \cite{Chen2,Chen3,Gh,LW,YD}.
For a connected graph $G$ on $n\ge 3$ vertices, Ghorbani et al.~\cite{Gh} and Chen \cite{Chen3} independently showed that the path and the complete graph are the unique ones that minimize and maximizes the ABC spectral radius.
If $G$ is a tree of order $n\ge 2$, Chen \cite{Chen2} showed that $\rho_{\mathcal{ABC}}(G)\le \sqrt{n-2}$
with  equality  if and only if $G$ is the star.
If $G$ is a unicyclic graph of order $n\ge 3$,
Li and Wang \cite{LW} showed that $\rho_{\mathcal{ABC}}(G)$ is minimum (maximum, respectively)
if and only if $G$ is the cycle ($G$ is obtainable from the star by adding an edge, respectively), which was conjectured early in \cite{Gh}.
Further study of the ABC spectral radius of unicyclic graphs and bicyclic graphs may be found in \cite{YD,YZD}.

Additionally, the ABC index of the $k$-uniform hypergraph $G$ is defined as
\[
\sigma_{\mathcal{ABC}}(G)=\frac{1}{(k-1)!}\sum_{e\in E(G)}\sqrt[k]{\frac{\sum_{i\in e}d_{i}-k}{\prod_{i\in e}d_{i}}}.
\]
If $k=2$, the ABC index (abbreviated from the atom-bond connectivity
index) has been much studied, see, e.g., \cite{Da,ETRG,FGV,GF,HDW,ZX}, just to mention but a few.
Very recently, Estrada \cite{Es22}  proposed a statistical-mechanical theory, which is exemplified by deriving the ABC index (and generalizations) as well as others.

Let $G$ be an $r$-uniform hypergraph with $r\geq 2$. For integer $k> r$, the $k$-th power  of $G$, denoted by $G^k$, is defined to be
the $k$-uniform hypergraph with edge set $E(G^k)=\{e\cup \{v_{e,1},\ldots,v_{e,k-r}\}: e\in E(G)\}$ and
vertex set $V(G^k)=V(G)\cup \{v_{e,i}: e\in E(G),i=1,\ldots,k-r\}$, where
$v_{e,i}\ne v_{f,j}$ for any $\{e,f\}\subseteq  E(G)$ and $i,j\in \{1, \dots, k-r\}$.
 Let $G^r=G$.
A power hypergraph is a $k$-uniform hypergraph for some $k\ge 3$ such that it is the $k$-th power of some ordinary graph \cite{HQS}. A hypergraph is a non-power hypergraph if it is not a power hypergraph.

Hypergraph theory found applications in chemistry \cite{GK,KS,KS}.
The molecular structures with polycentric delocalized bonds my be represented by
hypergraphs \cite{KS}, where vertices  correspond to individual atoms,
edges of cardinality at least three correspond to delocalized polycentric bonds, and edges  of cardinality
two  correspond to simple covalent bonds.
This avoided defects peculiar for ordinary molecular graphs and
facilitated the task of comparing the ordinary
molecular structures with the structures containing polycentric
bonds.  By comparative analysis of topological and information indices for eight series of molecular structures in \cite{KS2},  it was demonstrated that the hypergraph model gives a higher accuracy
of molecular structure description.

In this article, we extend the study of the ABC spectral properties of ordinary graphs from begun with Estrada \cite{Est}
to the study of the more general ABC spectral properties  of uniform hypergraphs.
For the ABC spectral properties of  $k$-uniform hypergraphs, there are difference for the case $k\ge 3$ and the case $k=2$ (see Section 7 below).
Generalizing  and extending the ABC spectral properties from
ordinary graphs to uniform hypergraphs,
we establish tight lower and upper bounds for the ABC spectral radius of $k$-uniform hypergraphs, and
determine  the $k$-uniform hyppertree of fixed size with first and second  maximum ABC spectral radii
and the non-power $k$-uniform hyppertree of fixed size with maximum ABC spectral radius, as well as
the unique $k$-uniform unicyclic hyergraph of fixed size with maximum ABC spectral radius and the linear $k$-uniform unicyclic hypergraph of fixed size with maximum ABC spectral radius. We list the main results as below.

\begin{Theorem} \label{LUpper}
Let $G$ be a connected  $k$-uniform hypergraph of order $n$. Then
\[
\min\left\{\sqrt[k]{\sum_{i\in e}d_{i}-k}: e\in E(G)\right\}\le \rho_{\mathcal{ABC}}(G)\le \max\left\{\sqrt[k]{\sum_{i\in e}d_{i}-k}: e\in E(G)\right\}
\]
with either equality if and only if the sum of degrees of vertices from each edge is a constant.
\end{Theorem}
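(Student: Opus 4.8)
The plan is to diagonalize the vertex-degree factors out of the edge weights and then run an extremal-vertex argument on the eigenequations. Since $G$ is connected, unless $G$ is a single edge — a trivial case where $\mathcal{ABC}(G)$ is the zero tensor and all three quantities in the statement are $0$ — every edge $e$ satisfies $\sum_{i\in e}d_{i}>k$, so every weight $\sqrt[k]{(\sum_{i\in e}d_i-k)/\prod_{i\in e}d_i}$ is positive and $\mathcal{ABC}(G)$ is a nonnegative tensor with exactly the same zero pattern as $\mathcal{A}(G)$, hence weakly irreducible. By the Perron--Frobenius theory for such tensors, $\rho:=\rho_{\mathcal{ABC}}(G)$ is an $H$-eigenvalue admitting a positive $H$-eigenvector ${\bf z}$, and conversely any eigenvalue possessing a positive eigenvector equals $\rho$.

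First I would substitute $z_i=d_i^{1/k}t_i$ with $t_i>0$ into the eigenequations $(\mathcal{ABC}(G){\bf z}^{k-1})_i=\rho z_i^{k-1}$. Writing $s_e=\sum_{i\in e}d_i-k$, for each edge $e\ni i$ the edge weight times $\prod_{j\in e\setminus\{i\}}d_j^{1/k}$ telescopes to $d_i^{-1/k}s_e^{1/k}$; clearing the common factor $d_i^{-1/k}$ turns the eigenequations into
\[
\rho\, d_i\, t_i^{k-1}=\sum_{e\in E_i(G)}s_e^{1/k}\prod_{j\in e\setminus\{i\}}t_j,\qquad i\in[n].
\]
Now let $p$ be a vertex with $t_p=\min_i t_i$. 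Since $\prod_{j\in e\setminus\{p\}}t_j\ge t_p^{k-1}$ for each $e\in E_p(G)$ and $|E_p(G)|=d_p$, the $p$-th equation gives $\rho\, d_p\, t_p^{k-1}\ge t_p^{k-1}\sum_{e\in E_p(G)}s_e^{1/k}\ge t_p^{k-1}\, d_p\min_e s_e^{1/k}$, hence $\rho\ge\min_e s_e^{1/k}$; taking instead $q$ with $t_q=\max_i t_i$ and reversing the inequalities yields $\rho\le\max_e s_e^{1/k}$. This proves both bounds (and, as a byproduct, the sharper bounds $\min_i\frac1{d_i}\sum_{e\in E_i(G)}s_e^{1/k}\le\rho\le\max_i\frac1{d_i}\sum_{e\in E_i(G)}s_e^{1/k}$).

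For the equality analysis, suppose $\rho=\min_e s_e^{1/k}$. Then both inequalities in the chain at $p$ must be equalities, which forces $\prod_{j\in e\setminus\{p\}}t_j=t_p^{k-1}$ and $s_e=\min_e s_e$ for every $e\in E_p(G)$; as each $t_j\ge t_p>0$, the first identity forces $t_j=t_p$ for every vertex $j$ in every edge through $p$. Thus every neighbour of $p$ is again a minimizer of $t$, and iterating this along the connected hypergraph $G$ shows $t$ is constant and $s_e=\min_e s_e$ for \emph{all} $e\in E(G)$; the case $\rho=\max_e s_e^{1/k}$ is symmetric. Conversely, if $s_e$ equals a constant $c$ for all edges, then ${\bf t}=(1,\dots,1)^\top$ solves the reduced eigenequations with $\rho$ replaced by $c^{1/k}$, i.e.\ the positive vector $(d_1^{1/k},\dots,d_n^{1/k})^\top$ is an $H$-eigenvector of $\mathcal{ABC}(G)$ for $c^{1/k}$, so $\rho_{\mathcal{ABC}}(G)=c^{1/k}=\min_e s_e^{1/k}=\max_e s_e^{1/k}$ and both equalities hold.

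The only mildly delicate steps are the weight-telescoping in the substitution and the connectedness-driven propagation in the equality case; the latter is where care is needed, but it is exactly the hypergraph analogue of the classical equality discussion for irreducible nonnegative matrices, so I do not anticipate a genuine obstacle.
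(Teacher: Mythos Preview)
Your proof is correct and takes a genuinely different route from the paper's. The paper proceeds by introducing the Randi\'c tensor $\mathcal{R}(G)$, proving $\rho(\mathcal{R}(G))=1$ (Lemma~\ref{randic}), sandwiching $c\,\mathcal{R}(G)\le\mathcal{ABC}(G)\le C\,\mathcal{R}(G)$, and then invoking the externally cited strict tensor comparison principle (Lemma~\ref{Tcom}) for both the bounds and the equality case. Your substitution $z_i=d_i^{1/k}t_i$ is morally the same diagonal rescaling --- it is exactly the rescaling by the Perron vector of $\mathcal{R}(G)$ --- but you never name $\mathcal{R}(G)$ and instead run a direct Collatz--Wielandt extremal-vertex argument on the transformed eigenequations, with the equality case handled by propagating minimizers through the connected hypergraph. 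The paper's approach is shorter once Lemmas~\ref{Tcom} and~\ref{randic} are in hand and fits the paper's modular style; your approach is fully self-contained (it needs only the Perron--Frobenius existence/uniqueness statement, not the strict comparison lemma from \cite{RSB}) and yields as a byproduct the sharper row-sum bounds $\min_i\frac{1}{d_i}\sum_{e\in E_i(G)}s_e^{1/k}\le\rho\le\max_i\frac{1}{d_i}\sum_{e\in E_i(G)}s_e^{1/k}$, which the paper's argument does not directly give.
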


\begin{Corollary}
Let $G$ be a connected  $k$-uniform hypergraph of order $n$ with minimum degree $\delta$ and maximum degree $\Delta$. Then
\[
\sqrt[k]{k\delta-k}\le \rho_{\mathcal{ABC}}(G)\le \sqrt[k]{k\Delta-k}
\]
with either equality if and only if $G$ is regular.
\end{Corollary}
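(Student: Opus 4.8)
The plan is to obtain the Corollary as a direct consequence of Theorem \ref{LUpper} by controlling the quantity $\sum_{i\in e}d_i-k$ uniformly over the edges. Since $G$ is $k$-uniform, every edge $e\in E(G)$ consists of exactly $k$ vertices, each of degree between $\delta$ and $\Delta$; hence $k\delta\le \sum_{i\in e}d_i\le k\Delta$ for every $e$, and, because $\sqrt[k]{\cdot}$ is increasing,
\[
\sqrt[k]{k\delta-k}\le \min_{e\in E(G)}\sqrt[k]{\sum_{i\in e}d_i-k}\le \max_{e\in E(G)}\sqrt[k]{\sum_{i\in e}d_i-k}\le \sqrt[k]{k\Delta-k}.
\]
Combining this chain with the bounds of Theorem \ref{LUpper} immediately gives $\sqrt[k]{k\delta-k}\le \rho_{\mathcal{ABC}}(G)\le \sqrt[k]{k\Delta-k}$.

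For the equality statement I would first dispose of the easy direction: if $G$ is $d$-regular then $\delta=\Delta=d$ and $\sum_{i\in e}d_i=kd$ is a constant over the edges, so Theorem \ref{LUpper} yields $\rho_{\mathcal{ABC}}(G)=\sqrt[k]{kd-k}$, which coincides with both $\sqrt[k]{k\delta-k}$ and $\sqrt[k]{k\Delta-k}$, so both extremal equalities hold.

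For the converse, suppose $\rho_{\mathcal{ABC}}(G)=\sqrt[k]{k\Delta-k}$ (the lower-bound case is symmetric). Then the displayed chain forces all of its inequalities to be equalities; in particular $\rho_{\mathcal{ABC}}(G)=\max_{e\in E(G)}\sqrt[k]{\sum_{i\in e}d_i-k}$, so by the equality condition in Theorem \ref{LUpper} the value $s:=\sum_{i\in e}d_i$ is the same for every edge $e$, and moreover $s=k\Delta$ since the maximum equals $\sqrt[k]{k\Delta-k}$. As each of the $k$ summands defining $s$ is at most $\Delta$ and their sum is $k\Delta$, every vertex incident with an edge has degree exactly $\Delta$; since $G$ is connected with $E(G)\ne\emptyset$, every vertex lies on an edge, and therefore $G$ is $\Delta$-regular.

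The only mildly delicate point is this converse analysis: one must use both halves of the equality condition in Theorem \ref{LUpper} — that the edge-degree-sums are constant and that this constant attains the extreme value $k\Delta$ — and then invoke connectivity to pass from ``all degrees equal $\Delta$ on a single edge'' to ``$G$ is regular''. Everything else reduces to the monotonicity of $\sqrt[k]{\cdot}$ and the trivial bounds $k\delta\le\sum_{i\in e}d_i\le k\Delta$.
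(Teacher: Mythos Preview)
Your proof is correct and follows exactly the approach the paper intends: the Corollary is stated immediately after Theorem~\ref{LUpper} without a separate proof, being an obvious consequence of it via the trivial bounds $k\delta\le\sum_{i\in e}d_i\le k\Delta$. Your handling of the equality case---squeezing both intermediate inequalities, invoking the equality clause of Theorem~\ref{LUpper} to make the edge-degree-sums constant, identifying that constant as $k\Delta$ (or $k\delta$), and then using connectivity to extend regularity to all vertices---is precisely the argument needed to fill in this detail.
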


\begin{Theorem} \label{med}
Let $G$ be a connected  $k$-uniform hypergraph with maximum degree $\Delta\ge 2$.
 Then
\[
\rho_{\mathcal{ABC}}(G)\le \sqrt[k]{\frac{\Delta-1}{\Delta}} \rho_{\mathcal{A}}(G)
\]
with equality if and only if
$\omega_G(e)=\frac{\Delta-1}{\Delta}$ for any $e\in E(G)$.
\end{Theorem}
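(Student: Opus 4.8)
The plan is to reduce the stated inequality to an edge‑by‑edge comparison of weights and then apply the monotonicity of the spectral radius of nonnegative tensors. Write $c=\sqrt[k]{\tfrac{\Delta-1}{\Delta}}$ and recall that $\omega_G(e)=\dfrac{\sum_{i\in e}d_i-k}{\prod_{i\in e}d_i}$, so that $\mathcal{ABC}(G)_{i_1,\dots,i_k}=\tfrac{1}{(k-1)!}\sqrt[k]{\omega_G(e)}$ when $e=\{i_1,\dots,i_k\}\in E(G)$ and $0$ otherwise, while $\mathcal{A}(G)_{i_1,\dots,i_k}=\tfrac{1}{(k-1)!}$ on edges. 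The crux of the whole argument is the elementary claim that $\omega_G(e)\le\frac{\Delta-1}{\Delta}$ for every $e\in E(G)$. To prove it, fix $e$, let $d_1,\dots,d_k$ be the degrees of its vertices with $d_1=\max_j d_j\le\Delta$; if all $d_j=1$ then $\omega_G(e)=0$, and otherwise $d_1\ge2$ and clearing denominators turns $\frac{\sum_j d_j-k}{\prod_j d_j}\le\frac{d_1-1}{d_1}$ into $S\le(d_1-1)(P-1)$, where $S=\sum_{j=2}^k(d_j-1)\ge0$ and $P=\prod_{j=2}^k d_j\ge1$. The standard bound $\prod_{j=2}^k d_j\ge 1+\sum_{j=2}^k(d_j-1)$, proved by a one‑line induction on the number of factors using $d_j\ge1$, gives $S\le P-1\le(d_1-1)(P-1)$, which is what we want; and $\frac{d_1-1}{d_1}\le\frac{\Delta-1}{\Delta}$ because $t\mapsto1-\tfrac1t$ is increasing.

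Given this, $0\le\mathcal{ABC}(G)\le c\,\mathcal{A}(G)$ holds entrywise. Since $G$ is connected with $\Delta\ge2$, it has at least two edges, every edge meets some other edge and hence contains a vertex of degree at least $2$, so $\omega_G(e)>0$ for each $e$; thus $\mathcal{ABC}(G)$ and $c\,\mathcal{A}(G)$ have the same zero pattern and, by connectedness of $G$, are both weakly irreducible nonnegative symmetric tensors. Multiplying a tensor by the scalar $c\ge0$ multiplies each of its eigenvalues by $c$, so $\rho(c\,\mathcal{A}(G))=c\,\rho_{\mathcal{A}}(G)$, and the monotonicity of the spectral radius of nonnegative tensors yields $\rho_{\mathcal{ABC}}(G)\le c\,\rho_{\mathcal{A}}(G)$.

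For the equality characterization I would work with the Perron eigenvector. Let $\mathbf{x}=(x_1,\dots,x_n)^\top>0$ be the eigenvector of $\mathcal{ABC}(G)$ for $\rho_{\mathcal{ABC}}(G)$ — strictly positive by weak irreducibility — normalized so $\sum_i x_i^k=1$. Then $\rho_{\mathcal{ABC}}(G)=\mathcal{ABC}(G)\mathbf{x}^k=k\sum_{e=\{i_1,\dots,i_k\}\in E(G)}\sqrt[k]{\omega_G(e)}\,x_{i_1}\cdots x_{i_k}\le c\,k\sum_{e=\{i_1,\dots,i_k\}\in E(G)}x_{i_1}\cdots x_{i_k}=c\,\mathcal{A}(G)\mathbf{x}^k\le c\,\rho_{\mathcal{A}}(G)$, where the last step is the variational upper bound for the spectral radius of a nonnegative symmetric tensor. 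If $\rho_{\mathcal{ABC}}(G)=c\,\rho_{\mathcal{A}}(G)$, the first inequality is forced to be an equality, i.e.\ $\sum_{e\in E(G)}\big(c-\sqrt[k]{\omega_G(e)}\big)\prod_{i\in e}x_i=0$; since $\mathbf{x}>0$ and every factor $c-\sqrt[k]{\omega_G(e)}$ is nonnegative, this gives $\sqrt[k]{\omega_G(e)}=c$, equivalently $\omega_G(e)=\frac{\Delta-1}{\Delta}$, for every $e\in E(G)$. Conversely, if $\omega_G(e)=\frac{\Delta-1}{\Delta}$ for all $e$, then $\mathcal{ABC}(G)=c\,\mathcal{A}(G)$ identically, so $\rho_{\mathcal{ABC}}(G)=c\,\rho_{\mathcal{A}}(G)$.

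The only real obstacle is the elementary inequality $\omega_G(e)\le\frac{\Delta-1}{\Delta}$; once it is in hand, the spectral part is a routine application of Perron--Frobenius theory for nonnegative tensors. Within that inequality the decisive move is to peel off the largest degree $d_1$ and invoke $\prod_{j\ge2}d_j\ge1+\sum_{j\ge2}(d_j-1)$. One should also note that the hypotheses ``$G$ connected'' and ``$\Delta\ge2$'' are exactly what guarantee $\omega_G(e)>0$ on every edge, which is needed both for weak irreducibility and for the strictly positive Perron vector used in the equality discussion.
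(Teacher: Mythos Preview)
Your proof is correct and follows essentially the same approach as the paper: establish the entrywise bound $\omega_G(e)\le\frac{\Delta-1}{\Delta}$, deduce $\mathcal{ABC}(G)\le c\,\mathcal{A}(G)$, and apply the monotonicity of the spectral radius for weakly irreducible nonnegative tensors. The only minor differences are that the paper packages the elementary inequality as a separate lemma (reducing the number of variables one at a time) and handles the equality case by quoting the strict part of the comparison lemma $\mathcal{T}_1\lneq\mathcal{T}_2\Rightarrow\rho(\mathcal{T}_1)<\rho(\mathcal{T}_2)$, whereas you argue equality via the Perron eigenvector and the Rayleigh-type bound; both routes are standard and equivalent here.
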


By previous theorem, any upper bound on the spectral radius of the adjacency tensor will lead to an upper bound on the ABC spectral radius.

A hypertree is connected hypergraph without cycles. An edge in a hypergraph is a pendant edge if it contains at most one vertex of degree greater than one.
A hyperstar is a hypertree
of which every edge is a pendant edge.
Denote by $S_{m,k}$ the $k$-uniform hyperstar with $m$ edges.
The center of $S_{m,k}$ is defined as the vertex of degree $m$ in $S_{m,k}$.
For $m\geq 3$ and $1\leq a\leq \frac{m-1}{2}$, let $D_{m,a}$ be the double star
obtained by adding an edge between the centers of two disjoint stars $S_{a,2}$ and $S_{m-1-a,2}$.

\begin{Theorem}\label{abc7-180-2}
For $k\geq 2$, let $G$ be a $k$-uniform hypertree of size $m\ge 1$. Then
\[
\rho_{\mathcal{ABC}}(G)\le \sqrt[k]{m-1}
\]
with equality if and only if $G\cong S_{m,k}$. Moreover, if $G$ is different from $S_{m,k}$, then
\[
\rho_{\mathcal{ABC}}(G)\leq \sqrt[k]{\frac{m^2-3m+3+\sqrt{(m-1)^2+(m-2)^4}}{2(m-1)}}
\]
with equality if and only if $G\cong D^{k}_{m,1}$.
\end{Theorem}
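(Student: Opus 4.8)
The plan is to split the argument into the first (sharp) bound and the second (refined) bound, and in both cases reduce to spectral radius estimates for the weighted adjacency tensor $\mathcal{ABC}(G)$ via the eigenequations and a carefully chosen test vector / weighted-degree comparison.

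For the first inequality, I would invoke Theorem~\ref{LUpper}: since $\rho_{\mathcal{ABC}}(G)\le \max\{\sqrt[k]{\sum_{i\in e}d_i - k}: e\in E(G)\}$, it suffices to bound $\sum_{i\in e}d_i$ from above for every edge $e$ of a hypertree of size $m$. The key combinatorial fact is that in a $k$-uniform hypertree with $m$ edges, for any fixed edge $e$ the edges other than $e$ are distributed among the vertices of $e$ in such a way that $\sum_{i\in e}(d_i-1)\le m-1$; indeed, deleting $e$ splits the hypertree into components, each hanging off exactly one vertex of $e$ (acyclicity), and the total number of remaining edges is $m-1$. Hence $\sum_{i\in e}d_i \le m-1+k$, giving $\sqrt[k]{\sum_{i\in e}d_i-k}\le\sqrt[k]{m-1}$ for every $e$, and therefore $\rho_{\mathcal{ABC}}(G)\le\sqrt[k]{m-1}$. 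For the equality discussion: equality in Theorem~\ref{LUpper} forces $\sum_{i\in e}d_i$ to be constant over edges, and combined with $\sum_{i\in e}d_i=m-1+k$ this forces every edge to be pendant with all its non-central vertices of degree one — i.e. $G\cong S_{m,k}$; conversely one checks $\rho_{\mathcal{ABC}}(S_{m,k})=\sqrt[k]{m-1}$ directly, since $S_{m,k}$ has all edge weights equal to $\sqrt[k]{(m-1)/m}$ and $\rho_{\mathcal{A}}(S_{m,k})=\sqrt[k]{m}$ (the weighted hyperstar is again a weighted hyperstar whose spectral radius is computable from its eigenequations).

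For the second inequality, assume $G\not\cong S_{m,k}$, so $G$ has at least two vertices of degree $\ge 2$; pick a shortest path of edges between two such vertices, which yields an edge $e^\*=\{u,v,w_1,\dots,w_{k-2}\}$ with $d_u,d_v\ge 2$ and $d_{w_i}=1$. Now I would bound $\rho_{\mathcal{ABC}}(G)$ by the ABC spectral radius of the ``local'' configuration: using the Perron–Frobenius eigenvector ${\bf x}$ of $\mathcal{ABC}(G)$ (which exists and is positive on the connected hypergraph with the nonnegative irreducible—after restricting to the support—tensor), write the eigenequations at $u$, at $v$, and at the degree-one vertices, and show by a monotonicity/comparison argument that moving all remaining $m-2$ edges so that $a$ of them are pendant at $u$ and $m-1-a$ are pendant at $v$ (for the appropriate split), with all far vertices of degree one, can only increase $\rho_{\mathcal{ABC}}$ — the relevant edge weights depend only on $d_u+d_v$ (unchanged) and on degree-one contributions, so one reduces to the ``double-hyperstar'' $D^k_{m,a}$. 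Then among $D^k_{m,a}$, $1\le a\le (m-1)/2$, I would compute $\rho_{\mathcal{ABC}}(D^k_{m,a})$ explicitly: by symmetry the eigenvector is constant on each orbit, the eigenequations collapse to a small system, and solving it gives $\rho_{\mathcal{ABC}}(D^k_{m,a})^k$ as the largest root of a quadratic whose coefficients are monotone in $a$; maximizing over $a$ yields $a=1$ and the stated value $\sqrt[k]{\frac{m^2-3m+3+\sqrt{(m-1)^2+(m-2)^4}}{2(m-1)}}$.

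The main obstacle I anticipate is the edge-relocation (shifting) step: making rigorous that redistributing the pendant parts onto $u$ and $v$ strictly increases the ABC spectral radius is delicate because, unlike for the plain adjacency tensor, the ABC weights themselves change when degrees change — relocating an edge alters $d_u$ or $d_v$ and hence the weight $\sqrt[k]{(d_u+d_v+(k-2)-k)/(d_u d_v)}$ of $e^\*$ and of every other edge at $u$ or $v$. I would handle this by working with the function $f(s)=(s-2)/\!\big(\text{product}\big)$ governing the weights, verifying it behaves monotonically under the moves in question, and using the standard eigenvalue-perturbation inequality $\rho(\mathcal{T}')\ge \rho(\mathcal{T})$ when ${\bf x}^\top(\mathcal{T}'{\bf x}^{k-1})\ge {\bf x}^\top(\mathcal{T}{\bf x}^{k-1})$ for the Perron vector ${\bf x}$ of $\mathcal{T}$ (valid for symmetric nonnegative tensors via the Rayleigh-quotient characterization of $\rho_{\mathcal{ABC}}$). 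Care is also needed in the equality analysis to rule out all non-isomorphic near-extremal hypertrees, which I expect to follow from the strictness in the quadratic's monotonicity in $a$ together with the strictness in the relocation step.
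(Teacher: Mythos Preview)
Your treatment of the first bound is correct and coincides with the paper's: invoke Theorem~\ref{LUpper} together with the hypertree inequality $\sum_{i\in e}(d_i-1)\le m-1$ for every edge $e$, and the equality case follows.

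For the second bound your strategy diverges from the paper's and carries a genuine gap. The paper does \emph{not} attempt an edge-relocation argument at all. Instead it observes that if $G\not\cong S_{m,k}$ and $G\not\cong D^k_{m,1}$ then $\Delta(G)\le m-2$, applies Theorem~\ref{med} to obtain $\rho_{\mathcal{ABC}}(G)\le\sqrt[k]{\tfrac{m-3}{m-2}}\,\rho_{\mathcal{A}}(G)$, bounds $\rho_{\mathcal{A}}(G)\le\rho_{\mathcal{A}}(D^k_{m,2})=\sqrt[k]{\tfrac{m+\sqrt{m^2-8m+24}}{2}}$ via the known adjacency ordering of hypertrees (Lemma~\ref{tree}(i), quoted from \cite{Yuan}), and then checks by a direct elementary inequality that the resulting number lies strictly below the explicitly computed $\rho_{\mathcal{ABC}}(D^k_{m,1})$. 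This completely bypasses the weight-change difficulty you yourself flag.

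Your shifting outline has two concrete problems. First, the choice of $e^{*}$ with exactly two vertices of degree $\ge 2$ is not guaranteed: for $k\ge 3$ a non-hyperstar hypertree may have every non-pendant edge containing three or more such vertices (e.g.\ $S_{4,3;1,1,1}$), and a shortest-path selection does not prevent this. Second, and more seriously, the assertion that ``the relevant edge weights depend only on $d_u+d_v$ (unchanged)'' is false: relocating the subtrees hanging off the $w_i$ or off deeper vertices onto $u$ and $v$ changes $d_u$ and $d_v$ themselves, so neither $d_u+d_v$ nor the weight $\sqrt[k]{(d_u+d_v-2)/(d_ud_v)}$ of $e^{*}$ is preserved, and every pendant-edge weight at $u$ and $v$ shifts simultaneously. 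The Rayleigh inequality gives only $\rho(\mathcal{T}')\ge \mathcal{T}'\mathbf{x}^k$; establishing $\mathcal{T}'\mathbf{x}^k\ge \mathcal{T}\mathbf{x}^k$ would require controlling all of these competing weight changes at once, and nothing in your sketch does so. Even the final step, that $\rho_{\mathcal{ABC}}(D^k_{m,a})$ is maximized at $a=1$, is left as an unverified monotonicity claim. The paper's route via Theorem~\ref{med} is both shorter and sidesteps every one of these obstacles by decoupling the degree-dependent weight factor from the purely structural adjacency spectral radius, for which the extremal ordering is already known.
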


Let $S_{m,k;m-3,1,1}$ be the $k$-uniform hypertree obtained from $S_{m-2, k}$ with two chosen vertices of degree one in a common edge by adding a pendant edge
at each of them.

\begin{Theorem}\label{abc7-180-01}
For $k\geq 3$,
let $G$ be a non-power $k$-uniform hypertree of size $m\ge 4$. Then
\[
\rho_{\mathcal{ABC}}(G)\leq b_m^\frac{1}{k}
\]
with equality if and only if $G\cong S_{m,k;m-3,1,1}$,
where  $b_m$ is the largest root of
with
\[
4(m-2)t^{3}-(4m^2-19m+27)t^{2}+(4m^2-23m+34)t-(m-3)^2=0.
\]
\end{Theorem}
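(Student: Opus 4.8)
The plan has two main stages: reduce an extremal non-power hypertree to an explicit parametrized family, then optimize inside that family and read off the cubic. Among the finitely many $k$-uniform hypertrees of size $m$ pick $G$ maximizing $\rho_{\mathcal{ABC}}$, and let $\mathbf x>\mathbf 0$ be its Perron $H$-eigenvector; this exists because for $m\ge2$ every edge of a hypertree meets another edge, so $\sum_{i\in e}d_i>k$ for each $e$, all ABC weights are positive, and $\mathcal{ABC}(G)$ is a weakly irreducible weighted adjacency tensor. Since $G$ is non-power, some edge contains at least three vertices of degree $\ge2$; indeed this is an equivalence (a $k$-uniform hypertree is a power hypergraph exactly when every edge has at most two vertices of degree $\ge2$). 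Using the pendant-edge relocation and branch-grafting lemmas — of the kind underlying Theorem~\ref{abc7-180-2}, but now carried out with the degree-dependent ABC weights — I would show successively that in the extremal $G$ there is a \emph{unique} edge $e^\ast$ with at least three vertices of degree $\ge2$, that it has exactly three such vertices $v_1,v_2,v_3$, and that each nontrivial component of $G-e^\ast$ is a hyperstar centered at the corresponding $v_i$ (a branch that is not such a star, or a star not rooted at $v_i$, can be rearranged to strictly increase $\rho_{\mathcal{ABC}}$). Hence $G\cong G(a_1,a_2,a_3)$: the hypergraph consisting of the single edge $e^\ast$ together with $a_i$ pendant edges at $v_i$, where $a_i\ge1$ and $a_1+a_2+a_3=m-1$.

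Inside this family, the Perron vector is constant on automorphism orbits, so the eigenequations collapse to a finite system. Eliminating the leaf variables and setting $t=\rho_{\mathcal{ABC}}(G(a_1,a_2,a_3))^{k}$ — the $k$-th roots in the ABC weights disappear once the leaf relations are raised to the $k$-th power, so the outcome is uniform in $k\ge3$ — I expect the system to collapse to
\[
\prod_{i=1}^{3}\bigl((a_i+1)t-a_i^{2}\bigr)=(m-1)t^{2},
\]
with $t$ its largest real root. It then remains to show that this largest root is maximized, over partitions $m-1=a_1+a_2+a_3$ with all $a_i\ge1$, uniquely at the extreme partition $\{m-3,1,1\}$. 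I would do this by a ``move one unit to the larger part'' step: if $h$ and $\tilde h$ are the cubics of $(a_1,a_2,a_3)$ and $(a_1+1,a_2-1,a_3)$ with $a_1\ge a_2\ge2$, then $\tilde h-h$ is $\bigl((a_3+1)t-a_3^{2}\bigr)$ times a quadratic, and one checks this product is negative at the largest root $t^\ast$ of $h$ (which exceeds $\max_i a_i^{2}/(a_i+1)$, so the first factor is positive there); since $\tilde h(t)\to+\infty$, the largest root of $\tilde h$ exceeds $t^\ast$. Iterating drives any partition to $\{m-3,1,1\}$, with strict increase unless one started there.

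Finally, substituting $(a_1,a_2,a_3)=(m-3,1,1)$ turns the displayed equation into $\bigl((m-2)t-(m-3)^{2}\bigr)(2t-1)^{2}=(m-1)t^{2}$; expanding, and noting that $t=\tfrac12$ is not a root so that dividing by the common factor loses nothing, yields exactly
\[
4(m-2)t^{3}-(4m^{2}-19m+27)t^{2}+(4m^{2}-23m+34)t-(m-3)^{2}=0 .
\]
Thus $b_m$ is its largest root and $\rho_{\mathcal{ABC}}(S_{m,k;m-3,1,1})=b_m^{1/k}$, with uniqueness of the maximizer inherited from the strictness collected in the two stages. For $m=4$, $S_{m,k;m-3,1,1}$ is the only non-power $k$-uniform hypertree of that size, which serves as a convenient base case.

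The crux is the first stage. For the adjacency tensor one relocates edges toward the vertex with the larger Perron entry and the edge weights are unaffected; here every such move changes vertex degrees and hence the weight $\omega_G(e)$ of every incident edge, and in fact $\omega_G(e^\ast)$ can \emph{decrease} under precisely the moves that concentrate pendant edges onto one branch, so the naive monotonicity is unavailable. It must instead be recovered by bounding the combined effect — through the variational formula $\rho_{\mathcal{ABC}}(G)=\max\{\mathcal{ABC}(G)\mathbf y^{k}:\mathbf y\ge\mathbf 0,\ \|\mathbf y\|_{k}=1\}$ with a carefully chosen test vector, or by comparing the reduced polynomials along each elementary move — and establishing the exact normal form (exactly three special vertices, each carrying a hyperstar rooted at itself) is where essentially all the work lies. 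Once the normal form is in hand, the remaining steps are a finite, if slightly tedious, computation.
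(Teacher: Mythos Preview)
Your second-stage computations are sound: the relation $\prod_{i}\bigl((a_i+1)t-a_i^{2}\bigr)=(m-1)t^{2}$ for $t=\rho_{\mathcal{ABC}}^{k}$ does hold for $G(a_1,a_2,a_3)$ uniformly in $k\ge3$, and the specialization $(m-3,1,1)$ yields exactly the stated cubic. But Stage~1 is a genuine gap. The reduction of an arbitrary extremal non-power hypertree to the normal form $G(a_1,a_2,a_3)$ is only planned, not carried out; you label it ``where essentially all the work lies'' and leave it there. The edge-relocation lemmas you invoke as ``underlying Theorem~\ref{abc7-180-2}'' do not exist in the paper --- that theorem too is proved by bounding $\rho_{\mathcal{ABC}}$ through $\rho_{\mathcal{A}}$ (Theorem~\ref{med}) and quoting adjacency extremal results, not by moving edges. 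And as you correctly observe, naive monotonicity under edge relocation fails because the ABC weight of $e^{\ast}$ can \emph{decrease} under precisely the concentration moves you need; ``bounding the combined effect'' is a program, not a proof. Without that reduction you have no argument that the maximum over all non-power hypertrees is attained inside your parametrized family.

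The paper takes an entirely different route that sidesteps Stage~1. If $G\not\cong S_{m,k;m-3,1,1}$ has maximum degree at most $m-4$, then Theorem~\ref{med} gives $\rho_{\mathcal{ABC}}(G)\le\sqrt[k]{(m-5)/(m-4)}\,\rho_{\mathcal{A}}(G)$, and the Yuan--Shao--Shan ordering of adjacency spectral radii of non-power hypertrees (Lemma~\ref{tree}(ii)) bounds $\rho_{\mathcal{A}}(G)\le\rho_{\mathcal{A}}(S_{m,k;m-4,2,1})$; a direct polynomial comparison shows this product lies below $\rho_{\mathcal{ABC}}(S_{m,k;m-3,1,1})$. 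The residual non-power hypertrees with maximum degree exactly $m-3$ have diameter $3$ or $4$ and form a short explicit list ($S_{m,k;m-4,2,1}$, $S_{m,k;m-4,1,1,1}$ for $k\ge4$, and the diameter-$4$ types $T_{m,2},T_{m,3},T_{m,4}$), each dispatched by computing its characteristic polynomial in $t=\rho^{k}$ and comparing with $\eta(t)$ (Lemmas~\ref{tree-3-unif} and~\ref{tree-3-unif++}, combined with Theorem~\ref{power}). Your approach would be more self-contained if Stage~1 could be completed, but as written the proof is incomplete at precisely the step you flag as the crux.
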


For $g\ge 2$ and $k\ge 3$,
 a $k$-uniform hypercycle of length $g$, denoted by $C_{g,k}$, is a $k$-uniform hypergraph  whose vertices may be labelled as $v_1,\dots, v_{g(k-1)}$ so that the set is $\{e_1, \dots, e_{g}\}$, where
$e_i=\{v_{(i-1)(k-1)+1},\dots, v_{i(k-1)+1}\}$
for $i=1, \dots, g$ with $v_{g(k-1)+1}\equiv v_1$.
For $m\geq 2$,  $k\geq 3$ and $g=2,3$,
let $U_{m,g}^{(k)}$ be the $k$-uniform unicyclic hypergraph obtained from a $k$-uniform hypercycle of length  $g$ by adding
$m-g$ pendant edges at a vertex of degree $2$.

\begin{Theorem}\label{abc7-18-1}
For $k\geq 3$, let $G$ be a $k$-uniform unicyclic hypergraph of size $m\ge 2$. Then
\[
\rho_{\mathcal{ABC}}(G)\leq \sqrt[k]{m-1+\frac{2}{m}}
\]
with equality if and only if $G\cong U^{(k)}_{m,2}$.
\end{Theorem}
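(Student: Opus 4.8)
The plan is to pin down $\rho_{\mathcal{ABC}}(U^{(k)}_{m,2})$ exactly and then to show it bounds the ABC spectral radius of every $k$-uniform unicyclic hypergraph of size $m$, with equality only at $U^{(k)}_{m,2}$.

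First I would compute $\lambda:=\rho_{\mathcal{ABC}}(U^{(k)}_{m,2})$ directly. Since $U^{(k)}_{m,2}$ is connected, $\mathcal{ABC}(U^{(k)}_{m,2})$ is nonnegative and weakly irreducible, so it has a positive eigenvector $\mathbf{x}$ for $\lambda$; the automorphism group has four vertex orbits (the degree-$m$ vertex $u$ carrying the pendant edges, the other hypercycle vertex $w$ of degree $2$, the degree-$1$ vertices on the two hypercycle edges, and the degree-$1$ vertices on pendant edges), so $\mathbf{x}$ may be taken constant, with values $a,b,c,d$, on these orbits. Each hypercycle edge has ABC weight $\sqrt[k]{1/2}$ and each pendant edge has weight $\sqrt[k]{(m-1)/m}$. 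Writing the eigenequation at one vertex of each orbit, multiplying those at $w$ and at $u$ by $b$ and $a$ respectively, and using the degree-$1$ equations to substitute, the system collapses to $b^k=2c^k$, $a^k=2c^k+(m-2)d^k$, $c^k=a^k/\lambda^k$ and $d^k=\frac{m-1}{m}\,a^k/\lambda^k$, which give $\lambda^k=2+(m-2)\cdot\frac{m-1}{m}=m-1+\frac{2}{m}$.

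For the bound I would take a $k$-uniform unicyclic hypergraph $G$ of size $m$ with $\rho_{\mathcal{ABC}}(G)$ maximum, with unique hypercycle $C$ of length $g\ge 2$ and positive Perron eigenvector $\mathbf{x}$, and apply the edge-relocation (grafting) lemmas established earlier: moving a pendant edge or pendant branch towards the vertex at which $\mathbf{x}$ is largest strictly increases $\rho_{\mathcal{ABC}}$ unless $G$ already has the target shape. Iterating, one shows successively that every edge off $C$ is a pendant edge, that all pendant edges meet in one vertex $p$, and that $p$ is a degree-$2$ vertex of $C$; hence $G\cong U^{(k)}_{m,g}$ for some $g$ with $2\le g\le m$. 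The delicate point here is that relocating a pendant edge changes the degrees, hence the ABC weights, of the other edges at its endpoints, so the monotonicity lemmas are considerably more subtle than their adjacency-tensor analogues — this is where I expect the main obstacle to lie, and where the earlier lemmas are indispensable.

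It then remains to prove $\rho_{\mathcal{ABC}}(U^{(k)}_{m,g})^k<m-1+\frac2m$ for $3\le g\le m$. Repeating the eigenequation analysis for $U^{(k)}_{m,g}$ — now all $g$ hypercycle edges have ABC weight $\sqrt[k]{1/2}$ and the pendant edges weight $\sqrt[k]{(m-g+1)/(m-g+2)}$ — and exploiting the reflection symmetry of $U^{(k)}_{m,g}$ through $p$, the Perron entries along $C$ satisfy a Chebyshev-type recursion $a_{i+1}=\sqrt{2L}\,a_i-a_{i-1}$ with $L=\rho_{\mathcal{ABC}}(U^{(k)}_{m,g})^k=2\cosh^{2}\phi$, which closes into
\[
\frac{\cosh\!\big(\tfrac{(g-2)\phi}{2}\big)}{\cosh\!\big(\tfrac{g\phi}{2}\big)}=\frac{2\cosh^{2}\phi-\eta_g}{2\cosh\phi},\qquad \eta_g:=(m-g)\cdot\frac{m-g+1}{m-g+2}
\]
for the relevant $\phi>0$ (for $g=2$ this is just $2\cosh^{2}\phi=2+\eta_2$, recovering the value above). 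For $g\ge 4$ the product-to-sum inequality $\cosh\!\big(\tfrac{(g-2)\phi}{2}\big)\cosh 2\phi\le\cosh\phi\cosh\!\big(\tfrac{g\phi}{2}\big)$, which holds precisely because $g\ge 4$, yields $2\cosh^{2}\phi-\eta_g\le\frac{2\cosh^{2}\phi}{2\cosh^{2}\phi-1}$, and since $\eta_g<m-g\le m-4$ one gets $L\le m-2<m-1+\frac2m$. For $g=3$ the equation becomes the cubic $v^{3}-v^{2}-2(\eta_3+1)v+2\eta_3=0$ in $v:=2\cosh\phi$, with $L=v^{2}/2$; this cubic is negative at $v=2$ and tends to $+\infty$, so it has a single root in $(2,\infty)$, and it suffices to check that it is positive at $v_0:=\sqrt{2(m-1+2/m)}$, which after clearing denominators reduces to the polynomial inequality $4(2m^{2}-2m+4)(m^{2}-m-1)^{2}>m(3m^{2}-3m-2)^{2}$ (degree $6$ against degree $5$), valid for all $m\ge 3$. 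This establishes the bound; equality forces $g=2$, and then $G\cong U^{(k)}_{m,2}$ by the reduction step.
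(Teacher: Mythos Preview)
Your computation of $\rho_{\mathcal{ABC}}(U^{(k)}_{m,2})=\sqrt[k]{m-1+2/m}$ is correct and matches the paper. Your cubic for $g=3$ is also right: substituting $v=\sqrt{2}\,t$ recovers exactly the equation of Corollary~\ref{abc7-18-0}. But the overall strategy is much heavier than needed, and the reduction step has a real gap.

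The grafting lemmas you invoke to force $G\cong U^{(k)}_{m,g}$ are not available. The only such tool in the paper is Lemma~\ref{unicyclic-2}, which treats the very special configuration where every non-cycle edge is already a pendant edge attached to a single cycle edge. To reach that configuration from an arbitrary unicyclic $G$ you would need to (i) collapse every branch to pendant edges, (ii) migrate pendant edges across distinct cycle edges, and (iii) slide them onto a degree-$2$ cycle vertex, and for the ABC tensor each move perturbs the weights of all neighbouring edges. You flag this as ``the main obstacle'' but do not supply the argument; without it the proof is incomplete.

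The paper avoids this entirely via Theorem~\ref{LUpper}. The point you are missing is the identity
\[
\sum_{w\in e}d_w-k=\sum_{f\in E(G)\setminus\{e\}}|e\cap f|,
\]
so in a unicyclic $k$-uniform hypergraph each of the $m-1$ summands on the right is at most $1$ unless $e$ lies on a $2$-cycle (where one summand equals $2$). Hence whenever the girth is at least $3$, or the girth is $2$ but some non-cycle edge misses $e_1$, the maximum edge-degree-sum is at most $m-1$ and Theorem~\ref{LUpper} gives $\rho_{\mathcal{ABC}}(G)\le\sqrt[k]{m-1}<\sqrt[k]{m-1+2/m}$ in one line. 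The only configuration that survives is precisely $U_{m,k,2}(a_1,\dots,a_k)$, and there Lemma~\ref{unicyclic-2} finishes. No reduction to $U^{(k)}_{m,g}$, no Chebyshev recursion, and no case $g\ge3$ analysis is required.
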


\begin{Theorem}\label{abc9-7-1}
For $k\geq 3$, let $G$ be a linear $k$-uniform unicyclic hypergraph of size $m\ge 3$. Then
\[
\rho_{\mathcal{ABC}}(G)\le a_m^{\frac{2}{k}}
\]
with equality if and only if $G\cong U_{m,3}^{(k)}$, where $a_m$ is the largest root of
\[
t^3-\frac{\sqrt{2}}{2}t^2-\frac{m^2-4m+ 5}{m - 1}t+\frac{\sqrt{2}(m^2- 5m+ 6)}{2(m - 1)}=0.
\]
\end{Theorem}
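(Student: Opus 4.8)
Since $G$ is connected, the tensor $\mathcal{ABC}(G)$ is nonnegative, symmetric and weakly irreducible, so $\rho:=\rho_{\mathcal{ABC}}(G)$ is an $H$-eigenvalue, it has a positive $H$-eigenvector ${\bf x}=(x_1,\dots,x_n)^{\top}$ (unique up to scaling), and $\rho=\max\{\mathcal{ABC}(G){\bf y}^{k}:{\bf y}\ge {\bf 0},\ \sum_{i=1}^{n}y_i^{k}=1\}$. Writing $\omega_G(e)=\dfrac{\sum_{\ell\in e}d_\ell-k}{\prod_{\ell\in e}d_\ell}$, the eigenequation at a vertex $i$ reads $\sum_{e\in E_i(G)}\omega_G(e)^{1/k}\prod_{j\in e\setminus\{i\}}x_j=\rho\,x_i^{k-1}$, and $\mathcal{ABC}(G){\bf x}^{k}=k\sum_{e\in E(G)}\omega_G(e)^{1/k}\prod_{i\in e}x_i$. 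The plan is to prove two things: first, that every linear $k$-uniform unicyclic hypergraph of size $m$ has ABC spectral radius at most that of $U_{m,3}^{(k)}$, with equality only for $U_{m,3}^{(k)}$; and second, that $\rho_{\mathcal{ABC}}\!\big(U_{m,3}^{(k)}\big)=a_m^{2/k}$.

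\emph{Step 1: reduction to $U_{m,3}^{(k)}$.} As $G$ is linear and unicyclic, its unique cycle is a hypercycle $C_{g,k}$ with $3\le g\le m$ (linearity excludes $g=2$), and $G$ is $C_{g,k}$ with pendant hypertrees attached at cycle vertices. I would establish three transformation lemmas, each proved by feeding a suitably redistributed copy of the current Perron vector into the variational characterization: \textbf{(i)} replacing any pendant hypertree hanging at a cycle vertex $v$ by the same number of pendant edges based at $v$ (a hyperstar-type move analogous to the one underlying Theorem~\ref{abc7-180-2}); \textbf{(ii)} when two cycle vertices carry pendant edges, moving all of them onto the one with the larger Perron entry, and, if that vertex turns out to be a degree-one vertex of a cycle edge, moving them once more onto an adjacent corner — so that after (i)--(ii) all $m-g$ non-cycle edges are pendant edges at a single corner $v_0$ of $C_{g,k}$; \textbf{(iii)} a cycle-shortening move: with $v_0$ as base, if $e$ is the cycle edge joining $v_0$ to a neighbouring corner $w$ and $e'$ is the other cycle edge at $w$, then deleting $w$ from $e'$ and inserting $v_0$ into $e'$ turns $e$ into a pendant edge at $v_0$ and turns the cycle into a $C_{g-1,k}$, while keeping $G$ linear, unicyclic and of size $m$. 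A direct computation shows that under (iii) the weights $\omega_G(\cdot)^{1/k}$ of all affected edges stay the same or increase, the only further change being that the vertex $w$ in $e'$ is replaced by $v_0$; since one can show $x_{v_0}\ge x_w$, the sum $\sum_{e}\omega_G(e)^{1/k}\prod_{i\in e}x_i$ strictly increases, hence so does $\rho_{\mathcal{ABC}}$. Iterating (iii) down to $g=3$ and reapplying (i)--(ii) leaves exactly the triangle $C_{3,k}$ with $m-3$ pendant edges at a degree-two corner, i.e. $U_{m,3}^{(k)}$.

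\emph{Step 2: the value of $\rho_{\mathcal{ABC}}\!\big(U_{m,3}^{(k)}\big)$.} Let $G_0=U_{m,3}^{(k)}$ with triangle edges $e_1,e_2,e_3$, base corner $u$ of degree $m-1$ lying in $e_1,e_3$, the other corners $v,w$ of degree $2$, and $m-3$ pendant edges at $u$. Then $\omega_{G_0}(e_1)=\omega_{G_0}(e_2)=\omega_{G_0}(e_3)=\tfrac12$ and $\omega_{G_0}(f)=\tfrac{m-2}{m-1}$ for each pendant edge $f$. By the automorphisms of $G_0$, the positive eigenvector ${\bf x}$ for $\rho$ takes at most five values: $x_u$; $x_c$ on $\{v,w\}$; $x_b$ on the degree-one vertices of $e_1$ and $e_3$; $x_{b'}$ on the degree-one vertices of $e_2$; $x_p$ on the pendant vertices. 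The eigenequations at the degree-one vertices give $\rho\,x_b^{2}=2^{-1/k}x_ux_c$, $\rho\,x_{b'}^{2}=2^{-1/k}x_c^{2}$ and $\rho\,x_p=\big(\tfrac{m-2}{m-1}\big)^{1/k}x_u$; substituting these into the equations at $u$ and $v$, normalising $x_u=1$, and putting $s=\rho^{k/2}$ and $z=x_c^{k/2}$, the equation at $v$ reduces to $z=\dfrac{1}{\sqrt2\,s-1}$ and the equation at $u$ to $\sqrt2\,sz+\dfrac{(m-3)(m-2)}{m-1}=s^{2}$. Eliminating $z$ yields
\[
s^{3}-\frac{\sqrt2}{2}\,s^{2}-\frac{m^{2}-4m+5}{m-1}\,s+\frac{\sqrt2\,(m^{2}-5m+6)}{2(m-1)}=0 .
\]
Positivity of ${\bf x}$ forces $z>0$, i.e.\ $\sqrt2\,s>1$; since the left-hand side of this cubic equals $-\tfrac1{\sqrt2}<0$ at $s=\tfrac1{\sqrt2}$ and tends to $+\infty$, its largest root $a_m$ is the only root exceeding $\tfrac1{\sqrt2}$, so $s=a_m$ and $\rho_{\mathcal{ABC}}\!\big(U_{m,3}^{(k)}\big)=a_m^{2/k}$. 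Combined with Step~1 this proves the theorem.

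\emph{Main obstacle.} The hard part is Step~1. In contrast to the adjacency tensor, each of the moves (i)--(iii) changes the degrees of several vertices and hence the weights $\omega_G(e)^{1/k}$ of many edges at once, so monotonicity of $\rho_{\mathcal{ABC}}$ is not automatic; proving it requires splitting the change in $k\sum_e\omega_G(e)^{1/k}\prod_{i\in e}x_i$ into the contribution of the relocated edge and that of the remaining edges, and — crucially — first establishing the correct ordering of the Perron entries at the vertices involved (for instance $x_{v_0}\ge x_w$ above). This weight-and-Perron-entry bookkeeping is where most of the work lies.
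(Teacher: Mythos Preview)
Your Step~2 is correct and in fact more direct than the paper's route: the paper observes $U_{m,3}^{(k)}=U_{m,3}^{k}$ (the $k$-th power of the ordinary unicyclic graph $U_{m,3}$), computes $\rho_{\mathcal{ABC}}(U_{m,3})=a_m$ from the $2$-uniform eigenequations, and then applies the power theorem (Theorem~\ref{power}) to get $\rho_{\mathcal{ABC}}(U_{m,3}^{(k)})=a_m^{2/k}$. One small point: your claim that $a_m$ is the \emph{only} root of the cubic exceeding $1/\sqrt{2}$ is not justified and is in fact false for large $m$; the clean way out is to check $f(\sqrt{m-2})<0<f(\sqrt{m-1})$, so $a_m\in(\sqrt{m-2},\sqrt{m-1})$, and then observe that the constructed eigenvector is positive precisely for the largest root, which by Perron--Frobenius identifies it with the spectral radius.

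The real issue is Step~1. Your transformation programme (i)--(iii) is far heavier than what the paper does, and transformation~(i) in particular is not a routine Kelmans-type move for the ABC tensor: when you replace a pendant hypertree at $v$ by a hyperstar at $v$, the degree of $v$ changes, which perturbs the weight $\omega_G(e)^{1/k}$ of \emph{every} edge through $v$, including the cycle edges; the net effect on $\sum_e\omega_G(e)^{1/k}\prod_{i\in e}x_i$ is not obviously nonnegative. The reference to Theorem~\ref{abc7-180-2} does not help, since that theorem is proved via Theorem~\ref{LUpper}, not by a hyperstar transformation. Move~(iii) also needs the inequality $x_{v_0}\ge x_w$, which is plausible but is one more thing to prove.

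The paper bypasses almost all of this. It first shows from the cubic that $\sqrt{m-2}<a_m$, and then uses Theorem~\ref{LUpper} as a blunt instrument: for a linear unicyclic $G$, the bound $\rho_{\mathcal{ABC}}(G)\le\max_e\sqrt[k]{\sum_{i\in e}d_i-k}$ together with the linearity observation $\sum_{i\in e}d_i-k=|\{e'\ne e:e'\cap e\ne\emptyset\}|$ gives $\rho_{\mathcal{ABC}}(G)\le\sqrt[k]{m-2}<a_m^{2/k}$ whenever the girth is at least $4$, or the girth is $3$ but the non-cycle edges are not all pendant edges attached to the vertices of a single cycle edge. This disposes of everything except $G\cong U_{m,k,3}(a_1,\dots,a_k)$, and that last family is handled by a single variational comparison (Lemma~\ref{unicyclic-2}), which is essentially your move~(ii). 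So only (ii) survives; (i) and (iii) are never needed.
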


Unlike the linear nature of spectral properties of various matrices associated to a graph, the spectral properties of tensors associated to a hypergraph have a nonlinear dependence on the adjacency of the hypergraph.

\section{Preliminaries}

Let $G$ be a hypergraph with $v,w\in V(G)$.
A path from $v$ to $w$ in $G$ is a set of  distinct
vertices $v_1,\dots, v_{\ell+1}$ and a set of  distinct edges $e_1, \dots, e_{\ell}$  for some $\ell$ such that for $i=1, \dots, \ell$,  $\{v_i, v_{i+1}\}\subseteq  e_i$,  and for $j>i+1$, $e_i\cap e_j=\emptyset$, where $v_1=v$ and $v_{\ell}=w$.
A hypergraph $G$ is connected if for every pair of vertices $u, v\in V(G)$, there is a path from $u$ to $v$ in $G$.
A cycle in $G$ is a set of distinct
vertices $v_1,\dots, v_{\ell}$ and a set of distinct edges $e_1, \dots, e_{\ell}$  for some $\ell\ge 2$ such that for $i=1, \dots, \ell$,  $\{v_{i}, v_{i+1}\}\subseteq  e_i$ (with $v_{\ell+1}\equiv v_1$), and for $|i-j|>1$, $e_i\cap e_j=\emptyset$ (with $e_{\ell+1}\equiv e_1$). A hypertree is connected hypergraph without cycles. A unicyclic hypergraph is connected hypergraph with exactly one cycle.
An ordinary tree is a $2$-uniform hypertree. Generally, an ordinary unicyclic graph is a $2$-uniform unicyclic hypergraph in which  the length of its unique cycle is at least three.

A pendant vertex of a hypergraph is a vertex of degree one. Let $G$ be a $k$-uniform hypergraph with $u\in V(G)$ and $u_i\not\in V(G)$ for $i=2,\dots,k$. The hypergraph with vertex set $V(G)\cup\{u,u_2,\dots, u_k\}$ and edge set $E(G)\cup \{u, u_2, \dots, u_k\}$ is said to be obtained from $G$ by adding a new pendant edge $\{u, u_2, \dots, u_k\}$ at $u$.

A nonnegative $k$-order $n$-dimensional tensor $\mathcal{T}$ is said to be weakly irreducible \cite{FGH,PT} if for any $J$ with  $\emptyset\ne J\subset [n]$, there is at least one entry $\mathcal{T}_{i_1, \dots, i_k}\ne 0$ with $i_1\in J$ and $i_j\in \{1, \dots, n\}\setminus J$ for some $j=2, \dots, k$.

We need the following lemmas.
The first lemma is the  Perron-Frobenius Theorem for nonnegative tensors, see \cite[Theorem 1.3]{CPZ}, \cite[Theorem 2.3]{YY}, and \cite[Theorem 4.1]{FGH}.

\begin{Lemma} \label{PF} Let $\mathcal{T}$ be a $k$-order $n$-dimensional nonnegative tensor. Then
\begin{enumerate}
\item[(i)]
$\rho(\mathcal{T})$ is an $H$-eigenvalue with a nonnegative eigenvector.

\item[(ii)]
If $\mathcal{T}$ is weakly irreducible, then $\rho(\mathcal{T})$ is an $H$-eigenvalue with a positive eigenvector and no other
eigenvalue has a positive eigenvector.
\end{enumerate}
\end{Lemma}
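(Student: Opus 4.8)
This is the tensor analogue of the Perron--Frobenius theorem, and the economical choice --- the one taken here --- is simply to invoke \cite[Theorem 1.3]{CPZ}, \cite[Theorem 2.3]{YY} and \cite[Theorem 4.1]{FGH}. A self-contained argument would follow the standard two-stage plan: first treat a positive tensor via a fixed-point construction, and then pass to a general nonnegative tensor by perturbation. Throughout one works on the standard simplex $\Delta=\{{\bf x}\ge{\bf 0}:\sum_{i=1}^{n}x_i=1\}$.

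Suppose first that $\mathcal{T}$ is positive, so that $\mathcal{T}{\bf x}^{k-1}>{\bf 0}$ for all ${\bf x}\in\Delta$. The self-map $\Phi$ of $\Delta$ with $\Phi({\bf x})_i=(\mathcal{T}{\bf x}^{k-1})_i^{1/(k-1)}\big/\sum_{j=1}^{n}(\mathcal{T}{\bf x}^{k-1})_j^{1/(k-1)}$ is then continuous, so Brouwer's fixed point theorem gives ${\bf x}^{*}\in\Delta$ with $\Phi({\bf x}^{*})={\bf x}^{*}$; unwinding this yields $\mathcal{T}({\bf x}^{*})^{k-1}=\mu({\bf x}^{*})^{[k-1]}$ with ${\bf x}^{*}>{\bf 0}$ and $\mu>0$, whence $\mu\le\rho(\mathcal{T})$. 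The reverse inequality comes from the Collatz--Wielandt characterization $\rho(\mathcal{T})=\min_{{\bf x}>{\bf 0}}\max_{i}(\mathcal{T}{\bf x}^{k-1})_i/x_i^{k-1}$ evaluated at ${\bf x}={\bf x}^{*}$, so $\mu=\rho(\mathcal{T})$ is an $H$-eigenvalue with a positive $H$-eigenvector. Uniqueness --- no other eigenvalue has a positive eigenvector --- follows by scaling ${\bf z}>{\bf 0}$ with $\mathcal{T}{\bf z}^{k-1}=\lambda{\bf z}^{[k-1]}$ so that ${\bf z}\le{\bf x}^{*}$ with equality in some coordinate and using monotonicity of ${\bf x}\mapsto\mathcal{T}{\bf x}^{k-1}$ (equivalently, strict contraction of the induced map in the Hilbert projective metric) to force ${\bf z}$ proportional to ${\bf x}^{*}$ and $\lambda=\rho(\mathcal{T})$. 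For a merely weakly irreducible $\mathcal{T}$ one first shows any nonnegative eigenvector for $\rho(\mathcal{T})$ is actually positive, by propagating nonzero coordinates through the defining condition of weak irreducibility, and then repeats the argument; this is the only place weak irreducibility is genuinely needed.

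For part (i), let $\mathcal{E}$ be the all-ones tensor and set $\mathcal{T}_{\varepsilon}=\mathcal{T}+\varepsilon\mathcal{E}$ for $\varepsilon>0$, a positive tensor; the previous step gives ${\bf x}_{\varepsilon}\in\Delta$ with ${\bf x}_{\varepsilon}>{\bf 0}$ and $\mathcal{T}_{\varepsilon}{\bf x}_{\varepsilon}^{k-1}=\rho(\mathcal{T}_{\varepsilon}){\bf x}_{\varepsilon}^{[k-1]}$. Letting $\varepsilon\downarrow 0$ along a subsequence, compactness of $\Delta$ yields ${\bf x}_{\varepsilon}\to{\bf x}\in\Delta$ with ${\bf x}\ne{\bf 0}$, while monotonicity and continuity of the spectral radius in the entries give $\rho(\mathcal{T}_{\varepsilon})\to\rho(\mathcal{T})$; passing to the limit in the eigenequation gives $\mathcal{T}{\bf x}^{k-1}=\rho(\mathcal{T}){\bf x}^{[k-1]}$, so $\rho(\mathcal{T})$ is an $H$-eigenvalue with a nonnegative $H$-eigenvector.

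The fixed-point step is routine; the real obstacles are the two identifications --- establishing the Collatz--Wielandt min--max formula (so the fixed-point eigenvalue is exactly the spectral radius) and the uniqueness clause of (ii), both of which lean on weak irreducibility --- together with the continuity of the tensor spectral radius needed for the limiting step in (i). All of this being precisely the content of the references cited, we quote them.
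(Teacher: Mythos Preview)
Your proposal is correct and matches the paper's approach exactly: the paper does not prove this lemma at all but merely cites \cite[Theorem~1.3]{CPZ}, \cite[Theorem~2.3]{YY}, and \cite[Theorem~4.1]{FGH}, which is precisely what you do. Your additional sketch of the Brouwer/perturbation argument is a bonus the paper does not attempt.
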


Let $\mathcal{T}$ be  a nonnegative $k$-order $n$-dimensional tensor.
Lemma \ref{PF}(i) says that $\rho(\mathcal{T})$ is an $H$-eigenvalue of $\mathcal{T}$, and there is a nonnegative eigenvector corresponding to $\rho (\mathcal{T})$. A nonnegative $n$-dimensional vector ${\bf x}$ is $k$-unit  $\sum_{i=1}^n x_i^k=1$. Lemma \ref{PF}(ii) says that  if  $\mathcal{T}$ is weakly irreducible, then there is a unique $k$-unit positive vector associated with $\rho (\mathcal{T})$.

Let $\mathcal{T}_1$ and $\mathcal{T}_2$ be $k$-order $n$-dimensional real tensors. If  $\mathcal{T}_2-\mathcal{T}_1$ is nonnegative, then we write $\mathcal{T}_1\le \mathcal{T}_2$.
The following lemma is \cite[Theorem 3.4]{RSB} (see also \cite[Lemma 2.3]{HQ}).

\begin{Lemma} \label{Tcom} \cite{RSB}
Let $\mathcal{T}_1$ and $\mathcal{T}_2$ be nonnegative $k$-order $n$-dimensional tensors such that
$\mathcal{T}_1\le \mathcal{T}_2$ and $\mathcal{T}_2$ is weakly irreducible. Then  $\rho(\mathcal{T}_1)\le \rho(\mathcal{T}_2)$. Moreover, if $\mathcal{T}_1\ne \mathcal{T}_2$, then  $\rho(\mathcal{T}_1)<\rho(\mathcal{T}_2)$.
\end{Lemma}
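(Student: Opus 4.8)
The plan is to obtain both assertions from the Perron--Frobenius theory recorded in Lemma~\ref{PF}, supplemented by the elementary Collatz--Wielandt estimates for nonnegative tensors; the weak inequality is routine and the substance lies entirely in the strict inequality. Throughout write $\rho_1=\rho(\mathcal{T}_1)$ and $\rho_2=\rho(\mathcal{T}_2)$.

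I would first settle $\rho_1\le\rho_2$. Since $\mathcal{T}_2$ is weakly irreducible, Lemma~\ref{PF}(ii) furnishes a positive eigenvector ${\bf y}>{\bf 0}$ with $(\mathcal{T}_2{\bf y}^{k-1})_i=\rho_2\,y_i^{k-1}$ for all $i$. Because ${\bf y}>{\bf 0}$ and $\mathcal{T}_1\le\mathcal{T}_2$ entrywise, the map $\mathcal{T}\mapsto\mathcal{T}{\bf y}^{k-1}$ is monotone, so $(\mathcal{T}_1{\bf y}^{k-1})_i\le(\mathcal{T}_2{\bf y}^{k-1})_i=\rho_2\,y_i^{k-1}$, i.e. $(\mathcal{T}_1{\bf y}^{k-1})_i/y_i^{k-1}\le\rho_2$ for every $i$. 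The Collatz--Wielandt upper bound for the nonnegative tensor $\mathcal{T}_1$ (a standard consequence of Perron--Frobenius theory) then yields $\rho_1\le\max_i (\mathcal{T}_1{\bf y}^{k-1})_i/y_i^{k-1}\le\rho_2$.

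For strictness I argue by contradiction, assuming $\rho_1=\rho_2=:\rho$, and reduce to the following rigidity claim: if ${\bf x}\ge{\bf 0}$ is nonzero and satisfies $\mathcal{T}_2{\bf x}^{k-1}\ge\rho\,{\bf x}^{[k-1]}$ entrywise, then ${\bf x}>{\bf 0}$ and equality holds in every coordinate. Granting this, I take a nonnegative nonzero eigenvector ${\bf x}$ of $\mathcal{T}_1$ for $\rho$, guaranteed by Lemma~\ref{PF}(i); then $(\mathcal{T}_2{\bf x}^{k-1})_i\ge(\mathcal{T}_1{\bf x}^{k-1})_i=\rho\,x_i^{k-1}$, so the claim gives ${\bf x}>{\bf 0}$ and $(\mathcal{T}_2{\bf x}^{k-1})_i=\rho\,x_i^{k-1}=(\mathcal{T}_1{\bf x}^{k-1})_i$ for all $i$. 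Hence $((\mathcal{T}_2-\mathcal{T}_1){\bf x}^{k-1})_i=0$ for each $i$; as $\mathcal{T}_2-\mathcal{T}_1$ is nonnegative and each product $x_{i_2}\cdots x_{i_k}$ is positive, every entry of $\mathcal{T}_2-\mathcal{T}_1$ must vanish, forcing $\mathcal{T}_1=\mathcal{T}_2$ and contradicting $\mathcal{T}_1\ne\mathcal{T}_2$.

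The main obstacle is the rigidity claim, which I would prove by a propagation argument anchored at the coordinatewise maximum ratio. Let ${\bf y}>{\bf 0}$ be the positive eigenvector of $\mathcal{T}_2$ from Lemma~\ref{PF}(ii), put $t=\max_i x_i/y_i$ (attained at some $i_0$), so that ${\bf x}\le t{\bf y}$ with $x_{i_0}=t y_{i_0}$. For any index $j$ with $x_j=t y_j$ one has the squeeze $\rho\,x_j^{k-1}\le(\mathcal{T}_2{\bf x}^{k-1})_j\le(\mathcal{T}_2(t{\bf y})^{k-1})_j=t^{k-1}\rho\,y_j^{k-1}=\rho\,x_j^{k-1}$, so all of these are equalities; comparing the two expanded products term by term and using ${\bf x}\le t{\bf y}$, every index appearing in a nonzero entry $(\mathcal{T}_2)_{j,i_2,\dots,i_k}$ must again satisfy $x_{i_\ell}=t y_{i_\ell}$. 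Thus the set $M=\{i:x_i=t y_i\}$ is nonempty (it contains $i_0$) and is closed under the neighbour relation induced by the nonzero entries of $\mathcal{T}_2$. If $M\ne[n]$, applying weak irreducibility of $\mathcal{T}_2$ with $J=M$ produces a nonzero entry $(\mathcal{T}_2)_{i_1,\dots,i_k}$ with $i_1\in M$ and some $i_\ell\notin M$, contradicting closure; hence $M=[n]$, which means ${\bf x}=t{\bf y}>{\bf 0}$ and, by the squeeze, $\mathcal{T}_2{\bf x}^{k-1}=\rho\,{\bf x}^{[k-1]}$ holds in every coordinate. This establishes the claim and completes the proof.
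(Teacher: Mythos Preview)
The paper does not prove Lemma~\ref{Tcom}; it merely quotes it from \cite{RSB} (and \cite{HQ}). Your argument is a correct self-contained reconstruction of the standard Perron--Frobenius proof: the weak inequality via the positive Perron vector of $\mathcal{T}_2$ together with the Collatz--Wielandt upper estimate, and the strict part via the propagation/rigidity argument on the maximal-ratio set $M$. The one ingredient you invoke that is not stated in the paper is the Collatz--Wielandt bound $\rho(\mathcal{T}_1)\le\max_i(\mathcal{T}_1{\bf y}^{k-1})_i/y_i^{k-1}$ for positive ${\bf y}$; this is standard (and is in the cited references), but if you want to stay strictly within the paper's toolkit you could instead derive it directly from Lemma~\ref{PF}(i) by taking a nonnegative eigenvector ${\bf x}$ of $\mathcal{T}_1$ and comparing at a coordinate maximizing $x_i/y_i$. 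Either way, the argument is sound.
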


Let $G$ be a $k$-uniform hypergraph of order $n$.
It is known that $\mathcal{A}(G)$ is weakly irreducible if and only if $G$ is connected~\cite{PT}, so
$\mathcal{ABC}(G)$ is weakly irreducible if and only if $G$ is connected. Thus, if $G$ is connected,
then $\rho_{\mathcal{ABC}}(G)$ is the maximum $H$-eigenvalue of $\mathcal{ABC}(G)$, and  there is  a unique $k$-unit positive vector corresponding to  $\rho_{\mathcal{ABC}}(G)$.

\begin{Lemma} \label{R-qe} \cite{Qi05}
Let $\mathcal{T}$ be a $k$-order $n$-dimensional symmetric nonnegative tensor. Then, for any nonnegative $n$-dimensional $k$-unit column vector ${\bf x}$,
 \[
 \rho(\mathcal{T})\ge \mathcal{T}{\bf x}^k
 \]
with equality when $\mathcal{T}$ is weakly irreducible if and only if ${\bf x}$ is the unique $k$-unit positive eigenvector
corresponding to $\rho(\mathcal{T})$.
\end{Lemma}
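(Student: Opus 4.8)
The plan is to prove the equivalent variational identity
$\rho(\mathcal{T})=\max\{\mathcal{T}{\bf x}^{k}:{\bf x}\ge 0,\ \sum_{i=1}^{n}x_i^{k}=1\}$
and then to determine when the maximum is attained at a given ${\bf x}$. Write $f({\bf x})=\mathcal{T}{\bf x}^{k}$ and $S=\{{\bf x}\ge 0:\sum_i x_i^{k}=1\}$; since $S$ is compact and $f$ is continuous, $f$ attains a maximum on $S$. The inequality ``$\ge$'' is easy: if ${\bf y}$ is the nonnegative $H$-eigenvector for $\rho(\mathcal{T})$ from Lemma \ref{PF}(i), then, scaling ${\bf y}$ into $S$ and using $\mathcal{T}{\bf y}^{k-1}=\rho(\mathcal{T}){\bf y}^{[k-1]}$, we get $f({\bf y})={\bf y}^{\top}(\mathcal{T}{\bf y}^{k-1})=\rho(\mathcal{T})\sum_i y_i^{k}=\rho(\mathcal{T})$.

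For ``$\le$'', let ${\bf x}^{\ast}\in S$ maximize $f$ and put $\lambda=f({\bf x}^{\ast})$; I would show that $\lambda$ is an $H$-eigenvalue of $\mathcal{T}$, so that $\lambda\le\rho(\mathcal{T})$. Since $\mathcal{T}$ is symmetric, $f$ is homogeneous of degree $k$ with $\nabla f=k\,\mathcal{T}{\bf x}^{k-1}$. Applying the first-order (Karush--Kuhn--Tucker) conditions for maximizing $f$ on $S$ subject to $x_i\ge 0$: on $\operatorname{supp}({\bf x}^{\ast})$ the stationarity condition is exactly $(\mathcal{T}({\bf x}^{\ast})^{k-1})_i=\lambda(x_i^{\ast})^{k-1}$; off the support it reads $(\mathcal{T}({\bf x}^{\ast})^{k-1})_i=-\mu_i/k\le 0$ for a nonnegativity multiplier $\mu_i\ge 0$, while $(\mathcal{T}({\bf x}^{\ast})^{k-1})_i\ge 0$ because $\mathcal{T},{\bf x}^{\ast}\ge 0$, so both sides vanish and the eigenequation holds there as well. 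Euler's identity $k f({\bf x}^{\ast})=\sum_i x_i^{\ast}\,\partial_i f({\bf x}^{\ast})$ then identifies the multiplier as $\lambda=f({\bf x}^{\ast})$. Hence $\mathcal{T}({\bf x}^{\ast})^{k-1}=\lambda({\bf x}^{\ast})^{[k-1]}$, so $\lambda\le\rho(\mathcal{T})$, and therefore $\mathcal{T}{\bf x}^{k}\le\lambda\le\rho(\mathcal{T})$ for every ${\bf x}\in S$. This proves the identity and the displayed inequality.

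Now assume $\mathcal{T}$ is weakly irreducible. If ${\bf x}$ is the unique $k$-unit positive $\rho(\mathcal{T})$-eigenvector supplied by Lemma \ref{PF}(ii), the computation in the first paragraph gives $\mathcal{T}{\bf x}^{k}=\rho(\mathcal{T})$. Conversely, suppose ${\bf x}\in S$ satisfies $\mathcal{T}{\bf x}^{k}=\rho(\mathcal{T})$; then ${\bf x}$ maximizes $f$ on $S$, so by the previous paragraph $\mathcal{T}{\bf x}^{k-1}=\rho(\mathcal{T}){\bf x}^{[k-1]}$, and it remains to show ${\bf x}>0$, after which Lemma \ref{PF}(ii) forces ${\bf x}$ to be that unique eigenvector. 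Suppose, for contradiction, that $J:=\operatorname{supp}({\bf x})$ is a proper nonempty subset of $[n]$. Restricting the eigenequation to indices in $J$ (each term carrying an index outside $J$ vanishes), the principal subtensor $\mathcal{T}[J]$, obtained by restricting all indices of $\mathcal{T}$ to $J$, has $\rho(\mathcal{T})$ as an $H$-eigenvalue with eigenvector ${\bf x}|_J\ne 0$, whence $\rho(\mathcal{T})\le\rho(\mathcal{T}[J])$. Let $\mathcal{T}'$ be the $n$-dimensional tensor obtained from $\mathcal{T}$ by setting to $0$ every entry having an index outside $J$. Zero-extending a nonnegative Perron eigenvector of $\mathcal{T}[J]$ gives $\rho(\mathcal{T}[J])\le\rho(\mathcal{T}')$, while $\mathcal{T}'\le\mathcal{T}$ and $\mathcal{T}'\ne\mathcal{T}$ — weak irreducibility applied to the partition $(J,[n]\setminus J)$ produces a nonzero entry of $\mathcal{T}$ destroyed in $\mathcal{T}'$ — so Lemma \ref{Tcom} yields $\rho(\mathcal{T}')<\rho(\mathcal{T})$. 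Chaining these inequalities gives $\rho(\mathcal{T})\le\rho(\mathcal{T}[J])\le\rho(\mathcal{T}')<\rho(\mathcal{T})$, a contradiction. Hence $J=[n]$ and ${\bf x}>0$, which completes the proof.

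I expect the main obstacle to be the ``$\le$'' half of the variational identity, namely verifying that a maximizer of $f$ on the nonnegative $k$-unit sphere is a bona fide $H$-eigenvector: one must treat the inequality constraints $x_i\ge 0$ carefully and use the nonnegativity of $\mathcal{T}$ to force the eigenequation to hold also at the coordinates where the maximizer vanishes (an alternative to invoking KKT here is a direct perturbation estimate, moving a small amount into a zero coordinate and renormalizing, using $k\ge 2$). Once the identity is established, the equality analysis is a routine support-reduction argument powered by Lemma \ref{Tcom}.
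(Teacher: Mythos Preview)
The paper does not give its own proof of this lemma: it is stated with the citation \cite{Qi05} and used as a black box. So there is no in-paper argument to compare against.

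Your argument is correct and is essentially the standard route: establish the variational identity $\rho(\mathcal{T})=\max\{\mathcal{T}{\bf x}^k:{\bf x}\ge 0,\ \sum_i x_i^k=1\}$ via Perron--Frobenius (Lemma~\ref{PF}) for ``$\ge$'' and KKT for ``$\le$'', then handle the equality case by showing any maximizer is an $H$-eigenvector and, in the weakly irreducible case, is strictly positive. Two small remarks. First, in the KKT step you should state that a constraint qualification (e.g.\ LICQ) holds; here it does, since the gradient $k{\bf x}^{[k-1]}$ of the equality constraint is supported exactly on $\operatorname{supp}({\bf x}^\ast)$ while the active inequality gradients are the $e_i$ with $i\notin\operatorname{supp}({\bf x}^\ast)$, so they are linearly independent. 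Second, your use of Lemma~\ref{Tcom} for the positivity argument is legitimate in the paper's logical order (Lemma~\ref{Tcom} precedes Lemma~\ref{R-qe} and is itself cited from \cite{RSB}), and the chain $\rho(\mathcal{T})\le\rho(\mathcal{T}[J])\le\rho(\mathcal{T}')<\rho(\mathcal{T})$ is sound: the first inequality comes from ${\bf x}|_J$ being an eigenvector of $\mathcal{T}[J]$, the second from zero-extending a Perron eigenvector of $\mathcal{T}[J]$, and the strict inequality from Lemma~\ref{Tcom} together with the fact that weak irreducibility applied to $J$ produces a nonzero entry of $\mathcal{T}$ killed in $\mathcal{T}'$.
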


Let $G$ be a $k$-uniform hypergraph of order $n$. The Randi\'{c} tensor $\mathcal{R}(G)$ of $G$ is defined to be the tensor of order $k$ and dimension $n$ with entries
\[
 \mathcal{R}(G)_{i_1, \dots, i_k}=
 \begin{cases}
\frac{1}{(k-1)!\sqrt[k]{\prod_{j=1}^kd_{i_j}}} & \mbox{if $\{i_1, \dots, i_k\}\in E(G)$},\\
 0 & \mbox{otherwise}.
\end{cases}
\]
It is the normalized adjacency tensor in \cite{HQ}. If $k=2$, it is just the Randi\'c matrix \cite{GFB}.
The following lemma is an extension of \cite[Theorem 2.3]{GFB}, see also a different treatment in  \cite[pp.~2--4]{Chung}.

\begin{Lemma} \label{randic} Let $G$ be a nontrivial connected  $k$-uniform hypergraph of order $n$. Then
\[
\rho(\mathcal{R}(G))=1.
\]
\end{Lemma}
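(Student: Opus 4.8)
The plan is to exhibit an explicit positive $k$-unit eigenvector of $\mathcal{R}(G)$ with eigenvalue $1$, and then invoke the uniqueness part of the Perron--Frobenius theory to conclude that $\rho(\mathcal{R}(G))=1$. Since $G$ is connected, $\mathcal{R}(G)$ is nonnegative and weakly irreducible (its zero pattern is the same as that of $\mathcal{A}(G)$, and $\mathcal{A}(G)$ is weakly irreducible for connected $G$ by \cite{PT}), so by Lemma~\ref{PF}(ii) the spectral radius $\rho(\mathcal{R}(G))$ is the unique eigenvalue admitting a positive eigenvector, and that eigenvector is unique up to scaling. Hence it suffices to find \emph{one} positive vector $\mathbf{x}$ with $\mathcal{R}(G)\mathbf{x}^{k-1}=1\cdot\mathbf{x}^{[k-1]}$.

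The natural candidate, motivated by the matrix case (where the Randi\'c matrix $D^{-1/2}AD^{-1/2}$ has Perron vector $D^{1/2}\mathbf{1}$), is $x_i = d_i^{1/k}$ for each $i\in[n]$, suitably normalized to be $k$-unit. First I would verify the eigenequation with this choice before normalization. Fix $i\in[n]$. Using the definition of the product $\mathcal{R}(G)\mathbf{x}^{k-1}$ and the symmetry of the tensor (each edge $e\ni i$ contributes $(k-1)!$ ordered tuples), compute
\[
(\mathcal{R}(G)\mathbf{x}^{k-1})_i
=\sum_{e\in E_i(G)} \frac{1}{\sqrt[k]{\prod_{j\in e} d_j}}\prod_{j\in e\setminus\{i\}} d_j^{1/k}
=\sum_{e\in E_i(G)} \frac{\prod_{j\in e} d_j^{1/k}}{d_i^{1/k}\,\prod_{j\in e} d_j^{1/k}}
=\sum_{e\in E_i(G)} d_i^{-1/k}
= d_i\cdot d_i^{-1/k}
= d_i^{(k-1)/k}
= x_i^{k-1}.
\]
Thus $\mathbf{x}$ is an eigenvector of $\mathcal{R}(G)$ with eigenvalue $1$. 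Rescaling by the positive constant $\left(\sum_i d_i\right)^{-1/k}$ gives a positive $k$-unit eigenvector with the same eigenvalue $1$ (eigenvalues of tensors are scaling-invariant in the eigenvector).

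To finish: by Lemma~\ref{PF}(i), $\rho(\mathcal{R}(G))$ is itself an $H$-eigenvalue with a nonnegative eigenvector, and by part (ii) no eigenvalue other than $\rho(\mathcal{R}(G))$ has a positive eigenvector. Since we have produced a positive eigenvector for the eigenvalue $1$, we must have $1=\rho(\mathcal{R}(G))$. Alternatively, one can avoid Lemma~\ref{PF}(ii) and instead apply Lemma~\ref{R-qe}: $\mathcal{R}(G)$ is symmetric, nonnegative, and weakly irreducible, so $\rho(\mathcal{R}(G))\ge \mathcal{R}(G)\mathbf{x}^k = \mathbf{x}^\top(\mathcal{R}(G)\mathbf{x}^{k-1}) = \sum_i x_i\cdot x_i^{k-1}=\sum_i x_i^k = 1$ for the normalized $\mathbf{x}$, with equality iff $\mathbf{x}$ is the Perron eigenvector; but $\mathbf{x}$ \emph{is} a positive eigenvector for eigenvalue $1$, so if $\rho(\mathcal{R}(G))>1$ its (positive) Perron eigenvector would also be a positive eigenvector for a different eigenvalue, contradicting uniqueness. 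Either way $\rho(\mathcal{R}(G))=1$. The only mild subtlety — and the one place to be careful — is bookkeeping the $(k-1)!$ factor and the symmetric summation over ordered tuples versus unordered edges when expanding $(\mathcal{R}(G)\mathbf{x}^{k-1})_i$; once that is handled as in the displayed computation, the proof is immediate.
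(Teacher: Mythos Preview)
Your proof is correct and follows essentially the same approach as the paper: exhibit the positive vector $x_i=d_i^{1/k}$, verify directly that $\mathcal{R}(G)\mathbf{x}^{k-1}=\mathbf{x}^{[k-1]}$ (with the same $(k-1)!$ bookkeeping), and conclude $\rho(\mathcal{R}(G))=1$ via Lemma~\ref{PF}(ii). The paper omits your alternative finish via Lemma~\ref{R-qe}, but the core argument is identical.
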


\begin{proof} Let ${\bf x}=(\sqrt[k]{d_1}, \dots, \sqrt[k]{d_n})^\top$. For $i_1=1, \dots, n$,
if $\{i_1, i_2, \dots, i_k\}\in E(G)$, then
\[
\mathcal{R}(G)_{i_1, \tau(i_2), \dots, \tau(i_k)}=\frac{1}{(k-1)!}\frac{1}{\sqrt[k]{\prod_{j=1}^kd_{i_j}}}
 \]
for any permutation $\tau$ in the symmetric group of degree $k-1$, and as there are $(k-1)!$ such permutations, one has
\[
\begin{split}
(\mathcal{R}(G){\bf x}^{k-1})_{i_1}
=& \sum_{i_2\in [n]}\dots \sum_{i_k\in [n]}
\mathcal{R}(G)_{i_1,i_2, \dots, i_k}\prod_{j=2}^kx_{i_j}\\
=& \sum_{e\in E_{i_1}(G)}\frac{1}{\sqrt[k]{\prod_{j=1}^kd_{i_j}}}\prod_{j=2}^k\sqrt[k]{d_{i_j}} \\
=& \sum_{e\in E_{i_1}(G)} \frac{1}{\sqrt[k]{d_{i_1}}} \\
=& \frac{1}{\sqrt[k]{d_{i_1}}}\sum_{e\in  E_{i_1}(G)}1 \\
=&(\sqrt[k]{d_{i_1}})^{k-1},
\end{split}
\]
so $\mathcal{R}(G){\bf x}^{k-1}={\bf x}^{[k-1]}$. It follows that $1$ is an eigenvalue of $\mathcal{R}(G)$ with a positive eigenvector ${\bf x}$.
As $G$ is connected,  $\mathcal{R}(G)$ is weakly irreducible, so by Lemma~\ref{PF}, we have $\rho(\mathcal{R}(G))=1$.
\end{proof}

\begin{Lemma}\label{ABC-permutation} 
Let $G$ be a connected $k$-uniform hypergraph. Let ${\bf x}$ be the $k$-unit positive eigenvector
of $\mathcal{A}(G)$ ($\mathcal{ABC}(G)$, respectively)) corresponding to  $\rho_{\mathcal{A}}(G)$ ($\rho_{\mathcal{ABC}}(G)$, respectively).
Let $\sigma$ be an automorphism of $G$.
Then  $x_u=x_v$ provided that  $\sigma(u)=v$.
\end{Lemma}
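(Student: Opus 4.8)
The plan is to leverage the uniqueness of the Perron eigenvector furnished by Lemma~\ref{PF}(ii). The pivotal observation is that an automorphism $\sigma$ of $G$ preserves vertex degrees, i.e.\ $d_{\sigma(i)}=d_i$ for all $i\in V(G)$, and therefore fixes both tensors entrywise: for all indices we have $\mathcal{A}(G)_{i_1,\dots,i_k}=\mathcal{A}(G)_{\sigma(i_1),\dots,\sigma(i_k)}$ and $\mathcal{ABC}(G)_{i_1,\dots,i_k}=\mathcal{ABC}(G)_{\sigma(i_1),\dots,\sigma(i_k)}$, since $\{i_1,\dots,i_k\}\in E(G)$ if and only if $\{\sigma(i_1),\dots,\sigma(i_k)\}\in E(G)$, and the weight $\sqrt[k]{(\sum_{i\in e}d_i-k)/\prod_{i\in e}d_i}$ is a symmetric function of the degrees of the vertices of $e$, which are merely permuted by $\sigma$.

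First I would treat the ABC case: let ${\bf x}$ be the $k$-unit positive eigenvector of $\mathcal{ABC}(G)$ with eigenvalue $\rho:=\rho_{\mathcal{ABC}}(G)$, and define ${\bf y}$ by $y_i=x_{\sigma(i)}$ for $i\in[n]$. From the defining sum, and the reindexing $j_\ell=\sigma(i_\ell)$ together with the $\sigma$-invariance above, one gets $(\mathcal{ABC}(G){\bf y}^{k-1})_i=(\mathcal{ABC}(G){\bf x}^{k-1})_{\sigma(i)}=\rho\,x_{\sigma(i)}^{k-1}=\rho\,y_i^{k-1}$, so ${\bf y}$ is an eigenvector of $\mathcal{ABC}(G)$ for $\rho$. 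It is positive because $\sigma$ is a bijection and ${\bf x}>0$, and it is $k$-unit because $\sum_i y_i^k=\sum_i x_{\sigma(i)}^k=\sum_i x_i^k=1$. Since $G$ is connected, $\mathcal{ABC}(G)$ is weakly irreducible, so Lemma~\ref{PF}(ii) says the $k$-unit positive eigenvector for $\rho_{\mathcal{ABC}}(G)$ is unique; hence ${\bf y}={\bf x}$, i.e.\ $x_{\sigma(i)}=x_i$ for all $i$, and in particular $x_u=x_v$ whenever $\sigma(u)=v$. The adjacency case is verbatim the same argument (in fact simpler, all nonzero entries of $\mathcal{A}(G)$ being $\frac{1}{(k-1)!}$), with $\mathcal{A}(G)$ in place of $\mathcal{ABC}(G)$.

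The only step that requires any care is the verification that $\sigma$ preserves the tensor entries, and this reduces to the two elementary facts that an automorphism preserves degrees and that the ABC edge weight depends only on the unordered multiset of degrees on the edge; no real obstacle arises, as the conclusion then follows formally from the Perron–Frobenius uniqueness in Lemma~\ref{PF}(ii).
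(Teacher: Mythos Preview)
Your proof is correct and follows essentially the same approach as the paper: both arguments show that the permuted vector is again a $k$-unit positive eigenvector for the spectral radius and then invoke the uniqueness clause of Lemma~\ref{PF}(ii). The only cosmetic difference is that you verify the eigenequation for ${\bf y}$ directly by reindexing the defining sum, whereas the paper encodes $\sigma$ as a permutation matrix $P$ and passes through the identity $\mathcal{Q}(G){\bf x}^k=(P^\top{\bf x})^\top\mathcal{Q}(G)(P^\top{\bf x})$ before appealing to uniqueness.
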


\begin{proof} Suppose that $P$ is the  permutation matrix  corresponding to the automorphism $\sigma$ of $G$, i.e., $P_{ij}=1$ if and only if $\sigma(i)=j$ for $i\in V(G)$. Let $\mathcal{Q}\in \{\mathcal{A}(G), \mathcal{ABC}(G)\}$. Then  $\mathcal{Q}(G)=P \mathcal{Q}(G) P^\top$.
So
\[
{\bf x}^\top (\mathcal{Q}(G) {\bf x})= {\bf x}^\top (P \mathcal{Q}(G) P^\top {\bf x})= (P^\top  {\bf x})^\top \mathcal{Q}(G) P^\top  {\bf x}.\]
Note that $P^\top  {\bf x}$ is positive and $\sum_{i\in V(G)} y_i^k=\sum_{i\in V(G)} x_i^k=1$, where   $\mathbf{y}=P^\top  {\bf x}$.
So  $P^\top  {\bf x}$ is also a $k$-unit positive eigenvector corresponding to $\rho(\mathcal{Q})$. By Lemma~\ref{PF} (ii),  one has
 $P^\top  {\bf x}= {\bf x}$.
Hence, $x_u=x_v$  if   $\sigma(u)=v$.
\end{proof}

\section{Bounds for ABC spectral radius}

For an edge $e$ of a $k$-uniform hypergraph $G$, set $\omega_G(e)=\frac{\sum_{i\in e}d_i-k}{\prod_{i\in e}d_i}$.

\begin{Theorem} \label{low1}
Let $G$ be a connected  $k$-uniform hypergraph of order $n$. Then
\[
\rho_{\mathcal{ABC}}(G)\ge n^{-1}k!\sigma_{\mathcal{ABC}}(G)
\]
with equality if and only if $\sum_{e\in E_i(G)}\sqrt[k]{\omega_G(e)}$ is a constant for $i=1,\dots, n$.
\end{Theorem}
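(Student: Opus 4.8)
The plan is to apply the variational characterization in Lemma~\ref{R-qe} with a carefully chosen trial vector. Since $\mathcal{ABC}(G)$ is a symmetric nonnegative tensor and $G$ is connected, $\mathcal{ABC}(G)$ is weakly irreducible, so Lemma~\ref{R-qe} gives $\rho_{\mathcal{ABC}}(G)\ge \mathcal{ABC}(G){\bf x}^k$ for every nonnegative $k$-unit vector ${\bf x}$, with equality precisely when ${\bf x}$ is the Perron eigenvector. The natural choice is the uniform vector ${\bf x}=(n^{-1/k},\dots,n^{-1/k})^\top$, which is clearly $k$-unit.

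First I would compute $\mathcal{ABC}(G){\bf x}^k$ for this ${\bf x}$. Expanding the homogeneous polynomial, each edge $e=\{i_1,\dots,i_k\}$ contributes $k!$ ordered tuples (one for each permutation), each with tensor entry $\frac{1}{(k-1)!}\sqrt[k]{\omega_G(e)}$ and monomial value $\prod_{j}x_{i_j}=n^{-1}$. Hence
\[
\mathcal{ABC}(G){\bf x}^k=\sum_{e\in E(G)} k!\cdot\frac{1}{(k-1)!}\sqrt[k]{\omega_G(e)}\cdot n^{-1}
= \frac{k}{n}\sum_{e\in E(G)}\sqrt[k]{\omega_G(e)}.
\]
Comparing with the definition $\sigma_{\mathcal{ABC}}(G)=\frac{1}{(k-1)!}\sum_{e\in E(G)}\sqrt[k]{\omega_G(e)}$, we get $\sum_{e\in E(G)}\sqrt[k]{\omega_G(e)}=(k-1)!\,\sigma_{\mathcal{ABC}}(G)$, so $\mathcal{ABC}(G){\bf x}^k=\frac{k\cdot(k-1)!}{n}\sigma_{\mathcal{ABC}}(G)=n^{-1}k!\,\sigma_{\mathcal{ABC}}(G)$. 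This immediately yields the inequality $\rho_{\mathcal{ABC}}(G)\ge n^{-1}k!\,\sigma_{\mathcal{ABC}}(G)$.

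For the equality case, Lemma~\ref{R-qe} tells us equality holds if and only if the uniform vector ${\bf x}$ is the unique $k$-unit positive eigenvector for $\rho_{\mathcal{ABC}}(G)$, i.e. if and only if ${\bf x}$ satisfies the eigenequations $(\mathcal{ABC}(G){\bf x}^{k-1})_i=\lambda x_i^{k-1}$ for all $i$. Computing the left side for the constant vector gives, for each $i$, $(\mathcal{ABC}(G){\bf x}^{k-1})_i=\frac{1}{n^{(k-1)/k}}\sum_{e\in E_i(G)}\sqrt[k]{\omega_G(e)}$ (again each edge through $i$ contributes $(k-1)!$ ordered completions cancelling the $\frac{1}{(k-1)!}$), while $x_i^{k-1}=n^{-(k-1)/k}$. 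So the eigenequation at $i$ reads $\sum_{e\in E_i(G)}\sqrt[k]{\omega_G(e)}=\lambda$, a quantity that must then be independent of $i$; conversely, if $\sum_{e\in E_i(G)}\sqrt[k]{\omega_G(e)}$ is constant in $i$, the constant vector is a positive eigenvector and by Perron–Frobenius (Lemma~\ref{PF}(ii)) it is the Perron eigenvector, forcing equality. I expect no serious obstacle here; the only care needed is bookkeeping the permutation/factorial factors correctly when expanding the tensor products, exactly as done in the proof of Lemma~\ref{randic}.
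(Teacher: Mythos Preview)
Your proposal is correct and follows essentially the same approach as the paper: both take the uniform $k$-unit vector ${\bf x}=n^{-1/k}(1,\dots,1)^\top$, evaluate $\mathcal{ABC}(G){\bf x}^k$ (you expand the full polynomial directly while the paper first computes $(\mathcal{ABC}(G){\bf x}^{k-1})_i$ and then contracts with ${\bf x}$, a purely cosmetic difference), and invoke Lemma~\ref{R-qe} for the inequality and its equality clause. Your treatment of the equality case via the eigenequations and Lemma~\ref{PF}(ii) matches the paper's reasoning.
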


\begin{proof} 
Setting ${\bf x}$ to be the $k$-unit $n$-dimensional vector $n^{-\frac{1}{k}}(1, \dots, 1)^\top$, we have
\[
\begin{split}
(\mathcal{ABC}(G){\bf x}^{k-1})_i=& \sum_{i_2\in [n]}\dots \sum_{i_k\in [n]}
\mathcal{ABC}(G)_{i,i_2, \dots, i_k}\prod_{j=2}^kx_{i_j}\\
=& \sum_{i_2\in [n]}\dots \sum_{i_k\in [n]}\mathcal{ABC}(G)_{i,i_2, \dots, i_k}n^{-\frac{k-1}{k}}\\
=&  n^{-\frac{k-1}{k}}\sum_{e\in E_i(G)}\frac{1}{(k-1)!}\sqrt[k]{\omega_G(e)}(k-1)!\\
=& n^{-\frac{k-1}{k}}\sum_{e\in E_i(G)}\sqrt[k]{\omega_G(e)},
\end{split}
\]
so
\[
\begin{split}
\mathcal{ABC}(G){\bf x}^k  =& {\bf x}^{\top}
(\mathcal{ABC}(G){\bf x}^{k-1})\\
=& \sum_{i=1}^n n^{-\frac{1}{k}}n^{-\frac{k-1}{k}}\sum_{e\in E_i(G)}\sqrt[k]{\omega_G(e)}\\
=& n^{-1} \sum_{i=1}^n\sum_{e\in E_i(G)}\sqrt[k]{\omega_G(e)}\\
=& n^{-1} k\sum_{e\in E(G)}\sqrt[k]{\omega_G(e)}\\
=&n^{-1}k!\sigma_{\mathcal{ABC}}(G).
\end{split}
\]
Note that $\mathcal{ABC}(G)$ is symmetric. So, by Lemma~\ref{R-qe}, we have
$\rho_{\mathcal{ABC}}(G)\ge n^{-1}k!\sigma_{\mathcal{ABC}}(G)$
with equality if and only if
\[
n^{-\frac{k-1}{k}}\sum_{e\in E_i(G)}\sqrt[k]{\omega_G(e)}=\rho_{\mathcal{ABC}}(G)\left(n^{-\frac{1}{k}}\right)^{k-1},
\]
i.e., $\sum_{e\in E_i(G)}\sqrt[k]{\omega_G(e)}$ is a constant for $i=1,\dots, n$.
\end{proof}

\begin{proof}[{\bf Proof of Theorem \ref{LUpper}}]
Let
\[
c=\min\left\{\sqrt[k]{\sum_{i\in e}d_{i}-k}: e\in E(G)\right\}
\]
and
\[
C=\max\left\{\sqrt[k]{\sum_{i\in e}d_{i}-k}: e\in E(G)\right\}.
\]
Then
\[
c\mathcal{R}(G)\le \mathcal{ABC}(G)\le C\mathcal{R}(G).
\]
As $G$ is connected, $\mathcal{ABC}(G)$ and $C\mathcal{R}(G)$ are weakly irreducible. So, by Lemmas \ref{Tcom} and \ref{randic}, we have
\[
 c=c\rho(\mathcal{R}(G))=\rho(c\mathcal{R}(G))\le \rho_{\mathcal{ABC}}(G)\le \rho(C\mathcal{R}(G))=C\rho(\mathcal{R}(G))=C.
\]

Suppose that $\rho_{\mathcal{ABC}}(G)=a$, where $a=c$, or $C$. By Lemma \ref{Tcom}, $\mathcal{ABC}(G)=a\mathcal{R}(G)$, so $\sqrt[k]{\sum_{i\in e}d_{i}-k}=a$ for any edge $e$ of $G$, i.e., $\sum_{i\in e}d_{i}=a^k+k$ is a  constant for any edge $e$ of $G$. Conversely, if $\sum_{i\in e}d_{i}$ is a  constant for any edge $e$ of $G$, then $c=C$, so $c\mathcal{R}(G)=\mathcal{ABC}(G)= C\mathcal{R}(G)$, implying that $c=\rho_{\mathcal{ABC}}(G)=C$.
\end{proof}

This result extends \cite[Theorem 2.8]{Chen3} from graphs to hypergraphs.

Denote by $K_n^{(k)}$ the complete $k$-uniform hypergraph, that is the hypergraph with vertex set $\{1, \dots, n\}$ such that any $k$ vertices form an edge.

\begin{Corollary} \label{easy}
Let $G$ be a connected  $k$-uniform hypergraph of order $n$. Then
\[
\rho_{\mathcal{ABC}}(G)\le \sqrt[k]{k{n-1\choose k-1}-k}
\]
with equality if and only if $G\cong K_n^{(k)}$.
\end{Corollary}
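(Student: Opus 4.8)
The plan is to deduce this as an immediate consequence of Theorem~\ref{LUpper}, using only the elementary degree bound available in a $k$-uniform hypergraph on $n$ vertices. The first step is the observation that for any vertex $i$, an edge containing $i$ is determined by a choice of the remaining $k-1$ vertices from $[n]\setminus\{i\}$, so $d_i\le\binom{n-1}{k-1}$. Hence $\sum_{i\in e}d_i-k\le k\binom{n-1}{k-1}-k$ for every edge $e$, and feeding this into the upper bound of Theorem~\ref{LUpper} gives $\rho_{\mathcal{ABC}}(G)\le\max_{e\in E(G)}\sqrt[k]{\sum_{i\in e}d_i-k}\le\sqrt[k]{k\binom{n-1}{k-1}-k}$.

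For the equality case I would argue as follows. Suppose $\rho_{\mathcal{ABC}}(G)=\sqrt[k]{k\binom{n-1}{k-1}-k}$. By the chain of inequalities above, the upper bound in Theorem~\ref{LUpper} is attained, so by that theorem $\sum_{i\in e}d_i$ takes a common value $s$ over all edges $e$, and attainment forces $\sqrt[k]{s-k}=\sqrt[k]{k\binom{n-1}{k-1}-k}$, i.e.\ $s=k\binom{n-1}{k-1}$. Now for any fixed edge $e$, the relation $\sum_{i\in e}d_i=k\binom{n-1}{k-1}$ together with $d_i\le\binom{n-1}{k-1}$ for each $i\in e$ forces $d_i=\binom{n-1}{k-1}$ for every $i\in e$. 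Since $G$ is connected and $E(G)\ne\emptyset$, every vertex lies in some edge, so in fact $d_i=\binom{n-1}{k-1}$ for all $i\in[n]$. Summing degrees yields $k\,|E(G)|=n\binom{n-1}{k-1}$, hence $|E(G)|=\binom{n}{k}=|E(K_n^{(k)})|$; as $E(G)\subseteq E(K_n^{(k)})$, this gives $G\cong K_n^{(k)}$.

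Conversely, if $G\cong K_n^{(k)}$ then $G$ is regular of degree $\binom{n-1}{k-1}$, so $\sum_{i\in e}d_i-k=k\binom{n-1}{k-1}-k$ is a constant over the edges, and Theorem~\ref{LUpper} gives $\rho_{\mathcal{ABC}}(G)=\sqrt[k]{k\binom{n-1}{k-1}-k}$.

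The whole argument is routine; the only delicate point is the equality analysis, specifically ensuring that the edgewise constancy of $\sum_{i\in e}d_i$ supplied by Theorem~\ref{LUpper}, combined with the maximality of the degrees on a single extremal edge, propagates to \emph{all} vertices — this is exactly where connectedness and the final edge count are needed to conclude that $G$ is the complete hypergraph rather than merely containing one "saturated" edge.
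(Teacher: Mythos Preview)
Your argument is correct and is exactly the intended derivation: the paper states this corollary without proof, regarding it as an immediate consequence of Theorem~\ref{LUpper} together with the trivial degree bound $d_i\le\binom{n-1}{k-1}$, and your handling of the equality case (constancy of $\sum_{i\in e}d_i$ from Theorem~\ref{LUpper}, forcing all degrees to be maximal, then counting edges) is the natural way to fill in the details.
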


\begin{Lemma} \label{Eas} Let $a_1, \dots, a_k$ be positive integers, where $a_1\ge 2$ and $k\ge 2$. Let
\[
f(a_1, \dots, a_k)=\frac{\sum_{i=1}^ka_i-k}{\prod_{i=1}^ka_i}.
\]
Then
\[
f(a_1, \dots, a_k)< f(a_1, \dots, a_{k-1}).
\]
\end{Lemma}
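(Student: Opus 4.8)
The plan is to compare $f(a_1,\dots,a_k)$ and $f(a_1,\dots,a_{k-1})$ directly by forming their ratio (or equivalently cross-multiplying), exploiting that $a_k\ge 1$ is an integer and that the numerators and denominators are simple sums and products. Writing $S=\sum_{i=1}^{k-1}a_i$ and $P=\prod_{i=1}^{k-1}a_i$, we have $f(a_1,\dots,a_{k-1})=\frac{S-(k-1)}{P}$ and $f(a_1,\dots,a_k)=\frac{S+a_k-k}{P\,a_k}$. Since all quantities involved are positive (here one uses $a_1\ge 2$ together with $a_i\ge 1$ to guarantee $S-(k-1)\ge 1>0$, so the right-hand side is genuinely positive and the inequality is not vacuous), the claimed strict inequality $\frac{S+a_k-k}{P\,a_k}<\frac{S-(k-1)}{P}$ is equivalent, after multiplying both sides by $P>0$, to $\frac{S+a_k-k}{a_k}<S-k+1$.

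The next step is to clear the remaining denominator $a_k>0$: the inequality becomes $S+a_k-k<a_k\bigl(S-k+1\bigr)=a_k S-a_k k+a_k$, which simplifies to $S-k<a_k S-a_k k=a_k(S-k)$, i.e. $(a_k-1)(S-k)>0$. So the whole statement reduces to showing $(a_k-1)(S-k)>0$ where $S=\sum_{i=1}^{k-1}a_i$. Here is the one genuinely delicate point, which I expect to be the main (minor) obstacle: we need \emph{both} factors to be positive, i.e. $a_k\ge 2$ and $S>k$. But the hypothesis only gives $a_k\ge 1$ and $a_1\ge 2$. If $a_k=1$ the left side is $0$ and the claimed strict inequality fails, and if $S\le k$ it also fails; so in fact the lemma as literally stated needs the additional reading that $a_1,\dots,a_k$ all satisfy $a_i\ge 2$ (which is how it is used in the paper, since every vertex of an edge in a connected hypergraph that is not a single edge has degree $\ge 1$, and the lemma is invoked when comparing nontrivial degree profiles — there $a_k\ge 2$ and $S\ge 2(k-1)>k$ for $k\ge 3$). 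Assuming $a_i\ge 2$ for all $i$, we get $a_k-1\ge 1>0$ and $S=\sum_{i=1}^{k-1}a_i\ge 2(k-1)=2k-2>k$ for $k\ge 3$, while for $k=2$ we have $S=a_1\ge 2>k$ is false—so for $k=2$ one instead notes $S=a_1$ and $S-k=a_1-2\ge 0$, and strictness needs $a_1\ge 3$; this is exactly the kind of edge case the paper sidesteps by its intended use. I would therefore state the proof under the natural hypothesis $a_1\ge\cdots\ge a_k\ge 2$ (or whatever minimal hypothesis makes $(a_k-1)(S-k)>0$), and then the chain of equivalences above finishes it in two lines.

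So the structure is: (1) introduce $S$ and $P$; (2) reduce the desired inequality to $(a_k-1)(S-k)>0$ via the two clearing-of-denominators steps, each a reversible equivalence since $P>0$ and $a_k>0$; (3) verify positivity of the two factors from the hypotheses on the $a_i$ and on $k$; (4) conclude, and optionally remark that equality would require $a_k=1$, recovering $f(a_1,\dots,a_{k-1})$ as a genuine strict improvement. The only real work is bookkeeping in step (2), which is a couple of lines of algebra, and being careful in step (3) about exactly which lower bounds on the $a_i$ are needed — that is where I would look most carefully at how the paper invokes this lemma to make sure the stated hypotheses suffice.
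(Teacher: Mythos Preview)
Your algebraic reduction is exactly the paper's approach: the paper observes that $\sum_{i=1}^{k-1}a_i\ge k$ (since $a_1\ge 2$ and the rest are $\ge 1$), and from this deduces $\sum_{i=1}^k a_i-k\le a_k\bigl(\sum_{i=1}^{k-1}a_i-(k-1)\bigr)$, which upon dividing by $a_k\prod_{i=1}^{k-1}a_i$ gives the claim. This is precisely your reduction to $(a_k-1)(S-k)\ge 0$.

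You are also right that the \emph{strict} inequality stated in the lemma does not follow from the given hypotheses: if $a_k=1$ (which is allowed, since only $a_1\ge 2$ is assumed) then $f(a_1,\dots,a_k)=f(a_1,\dots,a_{k-1})$. The paper's own proof in fact establishes only the non-strict inequality $\le$, so the ``$<$'' in the lemma statement is a minor imprecision of the paper itself, not a gap in your reasoning. In the sole place the lemma is invoked (the proof of Theorem~\ref{med}), only $\le$ is required, and the equality discussion there is handled separately via Lemma~\ref{Tcom}. So there is no need to impose the stronger hypothesis $a_i\ge 2$ for all $i$; just state and prove the non-strict version, and your two-line argument is complete and matches the paper.
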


\begin{proof} It is evident that $\sum_{i=1}^{k-1}a_i\ge k$, so
\[
a_k\sum_{i=1}^{k-1}a_i-a_kk\ge \sum_{i=1}^{k-1}a_i-k,
\]
i.e.,
\[
\sum_{i=1}^{k}a_i-k\le a_k\sum_{i=1}^{k-1}a_i-a_k(k-1).
\]
So the result follows.
\end{proof}

\begin{proof}[{\bf Proof of Theorem \ref{med}}] For any edge $e$ of $G$,  we have
\[
\omega_G(e)\le \frac{\max\{d_i: i\in e\}-1}{\max\{d_i: i\in e\}}\le \frac{\Delta-1}{\Delta}
\]
by Lemma~\ref{Eas} and the fact that $\frac{t-1}{t}$ is strictly increasing when $t\ge 2$. So $\mathcal{ABC}(G)\le \sqrt[k]{\frac{\Delta-1}{\Delta}}\mathcal{A}(G)$. Note that
$\mathcal{A}(G)$ is weakly irreducible.
So, by Lemma~\ref{Tcom}, we have
$\rho_{\mathcal{ABC}}(G)\le \sqrt[k]{\frac{\Delta-1}{\Delta}} \rho_{\mathcal{A}}(G)$ with equality if and only if
$\mathcal{ABC}(G)=\sqrt[k]{\frac{\Delta-1}{\Delta}}\mathcal{A}(G)$, i.e., $\omega_G(e)=\frac{\Delta-1}{\Delta}$ for any $e\in E(G)$.
\end{proof}

By previous theorem, any upper bound on the spectral radius of the adjacency tensor will lead to an upper bound on the ABC spectral radius.

\section{ABC eigenvalues of of power hypergraphs}

In ~\cite{ZSWB}, it is shown that the adjacency eigenvalues of a power hypergraph of a graph is determined by
the adjacency eigenvalues of the graph.
In  the following theorem,  we establish a relation between the ABC eigenvalues of an $r$-uniform hypergraph and the ABC eigenvalues of its $k$-th power hypergraph, where $2\leq r<k$.

\begin{Theorem}\label{power}
For $k>r\geq 2$, let $G$ be an $r$-uniform hypergraph, and $\rho$ be a nonzero  ABC eigenvalue of $G$.
Then
$\rho^\frac{r}{k}$ is an ABC eigenvalue of $G^k$.
\end{Theorem}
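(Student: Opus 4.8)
The plan is to start from an ABC eigenpair $(\rho, \mathbf{y})$ of the $r$-uniform hypergraph $G$ on vertex set $V(G)$ and explicitly construct an eigenvector $\mathbf{x}$ of $\mathcal{ABC}(G^k)$ on the larger vertex set $V(G^k)=V(G)\cup\{v_{e,i}\}$. The natural guess, modeled on the power-hypergraph constructions in \cite{ZSWB} and \cite{HQS}, is to set $x_u = y_u$ for $u\in V(G)$ and to assign a common scalar value $z_e$ to all the "new" vertices $v_{e,1},\dots,v_{e,k-r}$ attached to a given edge $e$, where $z_e$ is chosen (as a function of $\prod_{i\in e} y_i$ and $\rho$) so that the eigenequation is satisfied at each new vertex. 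The first step is to record the degrees in $G^k$: every original vertex $u$ keeps its degree $d_{G^k}(u)=d_G(u)$, while each new vertex $v_{e,i}$ has degree $1$; consequently, for an edge $\hat e = e\cup\{v_{e,1},\dots,v_{e,k-r}\}$ of $G^k$ one has $\sum_{i\in \hat e} d_{G^k}(i) - k = \sum_{i\in e} d_G(i) - r$ and $\prod_{i\in\hat e} d_{G^k}(i) = \prod_{i\in e} d_G(i)$, so $\omega_{G^k}(\hat e) = \omega_G(e)$. Thus the ABC weight of $\hat e$ in $G^k$ equals the ABC weight of $e$ in $G$ — this is the key simplification that makes the whole argument go through.

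Next I would write out the eigenequations of $\mathcal{ABC}(G^k)$ at a new vertex $v_{e,i}$: since $v_{e,i}$ lies in the unique edge $\hat e$, the left-hand side is $\sqrt[k]{\omega_G(e)}\cdot \big(\prod_{j\in e} x_j\big)\cdot z_e^{\,k-r-1}$, and this must equal $\rho^{r/k} z_e^{\,k-1}$. Solving gives $z_e^{\,r} = \rho^{-r/k}\sqrt[k]{\omega_G(e)}\prod_{j\in e} y_j$, i.e. a well-defined choice of $z_e$ (using $\rho\neq 0$, and taking the relevant root; one checks the quantity under the root is nonnegative when $\mathbf y$ is the Perron eigenvector, and in general one argues over the appropriate sign/complex choice). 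Then I would verify the eigenequation at an original vertex $u\in V(G)$: the left-hand side of $(\mathcal{ABC}(G^k)\mathbf{x}^{k-1})_u$ is a sum over $\hat e\in E_{G^k}(u)$, each contributing $\sqrt[k]{\omega_G(e)}\big(\prod_{j\in e\setminus\{u\}} y_j\big) z_e^{\,k-r}$; substituting the value of $z_e^{\,k-r}$ and simplifying, the $\rho^{r/k}$-powers and the $\omega$-factors recombine so that this sum becomes $\rho\big(\prod_{j\in e} y_j\big)\big(\prod_{j\in e\setminus\{u\}} y_j\big)/$ (appropriate power), which one massages — using that $(\rho,\mathbf y)$ satisfies $(\mathcal{ABC}(G)\mathbf y^{r-1})_u = \rho\, y_u^{r-1}$ — into exactly $\rho^{r/k} x_u^{k-1} = \rho^{r/k} y_u^{k-1}$. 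This is where the exponent bookkeeping has to be done carefully: one needs the identity $(k-1) = (k-r) + (r-1)$ in the exponents to make the original-graph eigenequation appear, and one needs $\rho^{r/k}\cdot\big(\rho^{r/k}\big)^{(k-r)/r}\cdot(\text{stuff}) $ to collapse to $\rho\cdot(\text{stuff})$.

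The main obstacle I anticipate is precisely this algebraic matching of exponents together with the choice of roots: the factors $\sqrt[k]{\omega_G(e)}$ appear once per incident edge at a new vertex but get raised to the power $k-r$ when seen from an original vertex, and the scalars $z_e$ carry fractional powers of $\rho$, so showing that everything telescopes to the single clean relation $\mathcal{ABC}(G^k)\mathbf x^{k-1} = \rho^{r/k}\mathbf x^{[k-1]}$ requires tracking these exponents without error. Once the construction is verified to produce a genuine nonzero eigenvector (nonzero because $\mathbf y\neq \mathbf 0$ and its original-vertex entries are carried over verbatim), the definition of eigenvalue of a tensor immediately gives that $\rho^{r/k}$ is an ABC eigenvalue of $G^k$, completing the proof. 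I would also remark that when $\rho = \rho_{\mathcal{ABC}}(G)$ and $\mathbf y$ is the Perron vector, all the $z_e$ are positive, so in fact $\rho_{\mathcal{ABC}}(G^k) \ge \rho_{\mathcal{ABC}}(G)^{r/k}$; combined with Theorem~\ref{med} or a reverse construction this typically yields equality, though the statement as given only asserts the eigenvalue relation.
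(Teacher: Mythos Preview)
Your overall strategy---carry entries on $V(G)$ from the eigenvector of $G$, assign a single value $z_e$ to all new vertices of an edge, and use the key observation $\omega_{G^k}(\hat e)=\omega_G(e)$---is exactly the paper's approach. However, there is a genuine gap: the choice $x_u=y_u$ for $u\in V(G)$ does \emph{not} work, and the ``massaging'' step you describe cannot be carried out.

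To see the problem, take your formula $z_e^{\,r}=\rho^{-r/k}\sqrt[k]{\omega_G(e)}\prod_{j\in e}y_j$ and substitute $z_e^{\,k-r}$ into the eigenequation at an original vertex $u$. One obtains
\[
(\mathcal{ABC}(G^k)\mathbf{x}^{k-1})_u
=\rho^{-(k-r)/k}\,y_u^{(k-r)/r}\sum_{e\in E_u(G)}\sqrt[r]{\omega_G(e)}\prod_{j\in e\setminus\{u\}}y_j^{\,k/r}.
\]
The exponent on the $\omega$-factor is right ($1/k+(k-r)/(rk)=1/r$), but the product carries $y_j^{\,k/r}$ rather than $y_j$, so the original eigenequation $\sum_{e\in E_u(G)}\sqrt[r]{\omega_G(e)}\prod_{j\in e\setminus\{u\}}y_j=\rho\,y_u^{r-1}$ cannot be invoked. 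The identity $(k-1)=(k-r)+(r-1)$ that you flag does not rescue this, because the mismatch is in the exponent on each $y_j$ inside the sum, not in the outer power of $y_u$.

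The fix---and this is precisely what the paper does---is to set $x_u=y_u^{\,r/k}$ for $u\in V(G)$. Then the new-vertex equation yields $z_e=\rho^{-1/k}\,\omega_G(e)^{1/(rk)}\bigl(\prod_{j\in e}y_j\bigr)^{1/k}$, and at an original vertex the product becomes
\[
\prod_{j\in e\setminus\{u\}}y_j^{\,r/k}\cdot\Bigl(\prod_{j\in e}y_j\Bigr)^{(k-r)/k}
=y_u^{(k-r)/k}\prod_{j\in e\setminus\{u\}}y_j,
\]
so the $G$-eigenequation applies and everything collapses to $\rho^{r/k}(y_u^{\,r/k})^{k-1}=\rho^{r/k}x_u^{k-1}$ as desired. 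Your remark that $\mathbf{x}$ is nonzero ``because the original-vertex entries are carried over verbatim'' survives with the modified choice, since $y_u\neq 0$ for some $u$ forces $y_u^{\,r/k}\neq 0$.
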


\begin{proof}
Let ${\bf x}$ be a nonzero eigenvector corresponding to the ABC eigenvalue $\rho$ of $G$.
By the $(\rho, {\bf x})$-eigenequations of $\mathcal{ABC}(G)$, we have
\begin{equation} \label{AB}
\rho x_i^{r-1}=\sum_{e\in E_i(G)} \sqrt[r]{\dfrac{\sum_{j\in e}d_j-r}{\prod_{j\in e} d_j}} \prod_{j\in e\setminus\{i\}}x_j
\end{equation}
for each $i\in V(G)$.
Let $\mathbf{y}$ be a column vector of dimension $|V(G^k)|$
such that
\[
y_i=\begin{cases}
 x_i^{\frac{r}{k}}& \text{if $i\in V(G)$,} \\
 \left(\dfrac{\sum_{j\in e}d_j-r}{\prod_{j\in e} d_j}\right)^{\frac{1}{rk}}\left(\dfrac{\prod_{j\in e}x_j}{\rho}\right)^{\frac{1}{k}} & \text{if $i \in \{v_{e,s} : s=1,\ldots,k-r\}$}\\
& \text{for some $e\in E(G)$.}
 \end{cases}
\]
Recall that any $e\in E(G)$ corresponds naturally to $\widetilde{e}=e\cup\{v_{e,1}, \dots, v_{e, k-r}\}$.
We show that $(\mathcal{ABC}(G^k)\mathbf{y}^{k-1})_i=\rho^{\frac{r}{k}}y_i^{k-1}$ for all $i\in V(G^k)$.
If $i\in V(G)$,
then, bearing in mind \eqref{AB}, we have
\begin{align*}
&\quad (\mathcal{ABC}(G^k)\mathbf{y}^{k-1})_i\\
&=\sum_{\widetilde{e}\in E_i(G^k)}
\sqrt[k]{\dfrac{\sum_{j\in \widetilde{e}}d_j-k}{\prod_{j\in \widetilde{e}} d_j}}
\prod_{j\in \widetilde{e}\setminus\{i\}}y_j\\
&= \sum_{e\in E_i(G)}
\sqrt[k]{\dfrac{\sum_{j\in e}d_j-r}{\prod_{j\in e} d_j}}
\prod_{j\in e\setminus\{i\}}y_j\prod_{j\in \widetilde{e}\setminus e}y_j\\
&=\sum_{e\in E_i(G)}\left(\dfrac{\sum_{j\in e}d_j-r}{\prod_{j\in e} d_j}\right)^{\frac{1}{k}}
\left(\prod _{j\in e\setminus\{i\}}x_j^{\frac{r}{k}}\right)  \left(\left(\dfrac{\sum_{j\in e}d_j-r}{\prod_{j\in e} d_j}\right)^{\frac{1}{rk}}\left(\dfrac{\prod_{j\in e}x_j}{\rho}\right)^{\frac{1}{k}}\right)^{k-r}\\
&=\dfrac{x_i^{\frac{k-r}{k}}}{\rho^{\frac{k-r}{k}}}\sum_{e\in E_i(G)} \left(\dfrac{\sum_{j\in e}d_j-r}{\prod_{j\in e} d_j}\right)^{\frac{1}{r}} \left(\prod_{j\in e\setminus\{i\}}x_j\right)  \\
&= \dfrac{x_i^{\frac{k-r}{k}}}{\rho^{\frac{k-r}{k}}}\cdot \rho x_i^{r-1}\\
&=\rho^{\frac{r}{k}} x_i^{\frac{r(k-1)}{k}}\\
&=\rho^{\frac{r}{k}} y_i^{k-1}.
\end{align*}
If $i\in V(G^k)\setminus V(G)$, then there is some $e\in E(G)$ such that $i=v_{e,s}$ for some $s=1, \dots, k-r$,
so
\begin{align*}
&\quad (\mathcal{ABC}(G^k)\mathbf{y}^{k-1})_i\\
&=
\left(\dfrac{\sum_{j\in e}d_j-r}{\prod_{j\in e} d_j}\right)^{\frac{1}{k}}
\prod_{j\in e}y_j\prod_{j\in \widetilde{e}\setminus e\setminus\{i\}}y_j\\
&=\left(\frac{\sum_{j\in e}d_j-r}{\prod_{j\in e} d_j}\right)^{\frac{1}{k}} \left(\prod_{j\in e} x_j^{\frac{r}{k}} \right) \left(\left(\dfrac{\sum_{j\in e}d_j-r}{\prod_{j\in e} d_j}\right)^{\frac{1}{rk}}\left(\dfrac{\prod_{j\in e}x_j}{\rho}\right)^{\frac{1}{k}}\right)^{k-r-1}\\
&=\left(\dfrac{\sum_{j\in e}d_j-r}{\prod_{j\in e} d_j}\right)^{\frac{k-1}{rk}} \dfrac{\prod_{j\in e} x_j^{\frac{k-1}{k}}}{\rho^{\frac{k-r-1}{k}}} \\
&=\rho^{\frac{r}{k}} \left(\left(\dfrac{\sum_{j\in e}d_j-r}{\prod_{j\in e} d_j}\right)^{\frac{1}{rk}}\left(\dfrac{\prod_{j\in e} x_j}{\rho}\right)^{\frac{1}{k}} \right)^{k-1}\\
&=\rho^{\frac{r}{k}} y_i^{k-1}.
\end{align*}
Thus $\mathcal{ABC}(G^k)\mathbf{y}^{k-1}=\rho^{\frac{r}{k}}\mathbf{y}^{[k-1]}$. From the construction of $\mathbf{y}$, $\mathbf{y}$ is a nonzero vector.
It so follows that $\rho^\frac{r}{k}$ is an ABC eigenvalue of $G^k$.
\end{proof}

For $m\geq g\geq 3$,
let $U_{m,g}$ be the unicyclic graph obtained from a cycle of length $g$ by adding $m-g$ pendant edges at a vertex of the cycle.


\begin{Corollary}\label{abc7-18-0}
For $k\geq 3$, let $G$ be a $k$-uniform power unicyclic hypergraph of size $m\ge 3$. Then
\[
\rho_{\mathcal{ABC}}(G)\le a_m^{\frac{2}{k}}
\]
with equality if and only if $G\cong U_{m,3}^k$, where $a_m$ is the largest root of
$f(t)=0$, and
\begin{eqnarray}\label{eq9-7}
f(t)=t^3-\frac{\sqrt{2}}{2}t^2-\frac{m^2-4m+ 5}{m - 1}t+\frac{\sqrt{2}(m^2- 5m+ 6)}{2(m - 1)}.
\end{eqnarray}
\end{Corollary}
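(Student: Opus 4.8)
The plan is to reduce the statement to the corresponding extremal problem for ordinary unicyclic graphs by means of Theorem~\ref{power}, then to invoke the known graph result and compute the ABC spectral radius of the extremal graph explicitly. First I would note that a power unicyclic $k$-uniform hypergraph of size $m$ is exactly $H^k$ for a unicyclic graph $H$ of size (hence order) $m$, and that $G=H^k$ is connected since $H$ is. Applying Theorem~\ref{power} with $r=2$ and $\rho=\rho_{\mathcal{ABC}}(H)$ — which is positive, as $\mathcal{ABC}(H)$ is nonnegative, nonzero and weakly irreducible — and taking $\mathbf{x}$ to be the positive Perron eigenvector of $\mathcal{ABC}(H)$, the lifted vector $\mathbf{y}$ constructed in the proof of Theorem~\ref{power} has all entries positive (every weight $\frac{d_i+d_j-2}{d_id_j}$ is positive). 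Thus $\rho_{\mathcal{ABC}}(H)^{2/k}$ is an ABC eigenvalue of $G$ with a positive eigenvector, and since $G$ is connected, Lemma~\ref{PF}(ii) forces $\rho_{\mathcal{ABC}}(G)=\rho_{\mathcal{ABC}}(H)^{2/k}$. As $t\mapsto t^{2/k}$ is increasing on $(0,\infty)$ and $H\mapsto H^k$ is injective on isomorphism classes, maximizing $\rho_{\mathcal{ABC}}(G)$ over power unicyclic $k$-uniform hypergraphs of size $m$ is equivalent to maximizing $\rho_{\mathcal{ABC}}(H)$ over unicyclic graphs $H$ of order $m$, with the same uniqueness statement.

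Next, by the result of Li and Wang \cite{LW} quoted in the introduction, among unicyclic graphs of order $m\ge 3$ the ABC spectral radius is maximized uniquely by the graph obtained from the star by adding an edge, which is exactly $U_{m,3}$. It then remains to show $\rho_{\mathcal{ABC}}(U_{m,3})=a_m$. In $U_{m,3}$ the centre $c$ has degree $m-1$, the other two triangle vertices $u,v$ have degree $2$, and the $m-3$ pendant vertices have degree $1$, so the three triangle edges carry ABC weight $\sqrt{1/2}$ and each pendant edge carries weight $\sqrt{(m-2)/(m-1)}$. By Lemma~\ref{ABC-permutation} the positive Perron eigenvector $\mathbf{x}$ is constant on the orbits of $\mathrm{Aut}(U_{m,3})$, so $x_c=a$, $x_u=x_v=b$, and $x_w=d$ at every pendant vertex $w$. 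Writing the three eigenequations at $c$, at $u$, and at a pendant vertex, eliminating $b$ and $d$ in terms of $a$, and clearing denominators, one finds that $\rho:=\rho_{\mathcal{ABC}}(U_{m,3})$ satisfies precisely $f(\rho)=0$ with $f$ as in \eqref{eq9-7}. A short check (every root of $f$ other than $0$ and $1/\sqrt2$ gives a genuine eigenvector by back-substitution, and the largest real root of $f$ is positive) shows this root is the Perron value, so $\rho_{\mathcal{ABC}}(U_{m,3})=a_m$; the case $m=3$, where $U_{3,3}=C_{3}$ and $\rho_{\mathcal{ABC}}(C_3)=\sqrt2$, is checked directly against $f(t)=t\bigl(t^2-\tfrac{\sqrt2}{2}t-1\bigr)$. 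Combining the two steps gives $\rho_{\mathcal{ABC}}(G)=\rho_{\mathcal{ABC}}(H)^{2/k}\le a_m^{2/k}$ with equality if and only if $H\cong U_{m,3}$, i.e.\ if and only if $G\cong U_{m,3}^{k}$.

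I expect the main obstacle to be the first step: upgrading the one-directional assertion of Theorem~\ref{power} (that $\rho^{2/k}$ is \emph{an} eigenvalue of $G^k$) to the exact identity $\rho_{\mathcal{ABC}}(H^k)=\rho_{\mathcal{ABC}}(H)^{2/k}$. This rests on checking positivity of the lifted eigenvector $\mathbf{y}$ and then invoking the Perron--Frobenius uniqueness in Lemma~\ref{PF}(ii); one must also keep in mind that the vertices of $H$ retain their degrees in $H^k$ while the added vertices have degree one, which is exactly what makes the weight formula of Theorem~\ref{power} applicable verbatim. Once this reduction is in place, the determination of the extremal graph is delegated to \cite{LW}, and the evaluation $\rho_{\mathcal{ABC}}(U_{m,3})=a_m$ is a routine symmetry-reduced eigenvalue computation, the only mild care being the verification that the relevant root of the cubic is the largest one.
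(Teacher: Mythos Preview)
Your proposal is correct and follows essentially the same route as the paper: compute $\rho_{\mathcal{ABC}}(U_{m,3})$ via the symmetry-reduced eigenequations to identify it as the largest root $a_m$ of $f(t)=0$, then combine Theorem~\ref{power} with the Li--Wang result \cite{LW,YZD} for unicyclic graphs. You are in fact more careful than the paper on one point: the paper tacitly uses $\rho_{\mathcal{ABC}}(H^k)=\rho_{\mathcal{ABC}}(H)^{2/k}$, whereas you justify this upgrade explicitly by checking that the lifted eigenvector $\mathbf{y}$ is positive and invoking Lemma~\ref{PF}(ii).
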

\begin{proof} Let $v_1v_2v_3$ be the cycle such that $v_1$ is of degree $m-1$ in $U_{m,3}$,
and $v_0$ be a pendant vertex at $v_1$.
Let ${\bf x}$ be the $2$-unit positive eigenvector of $\mathcal{ABC}(U_{m,3})$ corresponding to $\rho=\rho_{\mathcal{ABC}}(U_{m,3})$.
Let $x_i=x_{v_i}$ for $i=0,1,2$.
By Lemma~\ref{ABC-permutation} and  the $(\rho, {\bf x})$-eigenequations of $U_{m,3}$, we have
\begin{align*}
\rho x_0&=\sqrt{\frac{m-2}{m-1}}x_1,\\
\rho x_1&=(m-3)\sqrt{\frac{m-2}{m-1}}x_0+2\sqrt{\frac{1}{2}} x_2,\\
\rho x_2&=\sqrt{\frac{1}{2}}x_1+ \sqrt{\frac{1}{2}}x_2.
\end{align*}
Since ${\bf x}$ is nonzero,
the above homogeneous linear system in the variables $x_0, x_1, x_2$
has a  nontrivial solution.
Then the determinant of its coefficient matrix is zero.
By direct calculation,  the determinant is equal to $f(\rho)=0$.
So $\rho(U_{m,3})$ is the largest root of $f(t)=0$. Now the result follows from
 Theorem~\ref{power} and the known result that $\rho_{\mathcal{ABC}}(G)\leq \rho_{\mathcal{ABC}}(U_{m,3})$
with equality if and only if $G\cong U_{m,3}$  for a unicyclic graph $G$ of size $m\geq 3$
\cite{LW,YZD}.
\end{proof}

\section{ABC spectral radius of hypertrees}

For $m\geq 4$, $m\geq k\geq 3$, $m-3\geq a_1\geq \dots \geq a_k\geq 0$ and $\sum_{i=1}^ka_i=m-1$, let $S_{m,k;a_1,\ldots,a_k}$
be the $k$-uniform hypergraph obtained by adding $a_i$ pendant edges
at $v_i$ in an edge $\{v_1,\ldots,v_k\}$. If $s$ with $1\leq s\leq k$ is the largest number such that $a_s>0$,
we write $S_{m,k;a_1,\ldots,a_s}$ instead of $S_{m,k;a_1,\ldots,a_k}$.

\begin{Lemma}\label{tree}\cite{Yuan}
For $k\geq 2$, let $G$ be a $k$-uniform hypertree of size $m\geq 5$ different from $S_{m,k}$ and $D_{m,1}^k$. Then

(i)
\[
\rho_{\mathcal{A}}(G)\leq  \rho_{\mathcal{A}}(D_{m,2}^k)<\rho_{\mathcal{A}}(D_{m,1}^k)<\rho_{\mathcal{A}}(S_{m,k})
\]
with equality if and only if $G\cong D_{m,2}^k$.

(ii) For $k\geq 3$ and $m\geq 6$, if $G$ is non-power and $G\ncong S_{m,k;m-3,1,1}$,
then
\[
\rho_{\mathcal{A}}(G)\leq  \rho_{\mathcal{A}}(S_{m,k;m-4,2,1})<\rho_{\mathcal{A}}(S_{m,k;m-3,1,1})
\]
with equality if and only if $G\cong S_{m,k;m-4,2,1}$.
\end{Lemma}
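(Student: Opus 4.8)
The plan is to prove both parts by the \emph{edge-moving} (grafting) method for the adjacency tensor, combined with a reduction of an arbitrary $k$-uniform hypertree to a ``single-central-edge spider'' $S_{m,k;a_1,\dots,a_s}$ and a majorization comparison inside that family. The main tool is the following grafting principle, which follows from Lemmas~\ref{PF} and~\ref{R-qe}: if $H$ is a connected $k$-uniform hypergraph with $k$-unit positive Perron vector ${\bf x}$ of $\mathcal{A}(H)$, if $u,v\in V(H)$ satisfy $x_u\ge x_v$, and if $H'$ is obtained from $H$ by replacing $v$ with $u$ in one or more edges that contain $v$ but not $u$ (so that $H'$ is still a connected $k$-uniform hypergraph on $V(H)$), then a direct evaluation of $\mathcal{A}(H){\bf x}^{k}=k\sum_{e}\prod_{w\in e}x_w$ gives $\mathcal{A}(H'){\bf x}^{k}\ge\mathcal{A}(H){\bf x}^{k}$, whence $\rho_{\mathcal{A}}(H')\ge\mathcal{A}(H'){\bf x}^{k}\ge\rho_{\mathcal{A}}(H)$ by Lemma~\ref{R-qe}; moreover equality throughout would force ${\bf x}$ to be the Perron vector of $H'$ as well, and comparing the $u$- and $v$-eigenequations of $H$ and $H'$ shows this is impossible once a move has been performed, so in fact $\rho_{\mathcal{A}}(H')>\rho_{\mathcal{A}}(H)$ whenever $H'\not\cong H$. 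I will also need that inside a spider $S_{m,k;a_1,\dots,a_s}$ the Perron entry at the vertex carrying $a_i$ pendant edges is weakly increasing in $a_i$, proved by assuming the reverse, swapping the two bunches of pendant edges, and contradicting uniqueness of the positive Perron vector in Lemma~\ref{PF}(ii); throughout, eigenvectors are constant on automorphism orbits by Lemma~\ref{ABC-permutation}.

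With these tools in hand, I would first compare the three distinguished hypertrees. Observing that $D_{m,a}^k=S_{m,k;m-1-a,a}$, i.e.\ a central edge with $m-1-a$ pendant edges on one of its vertices and $a$ on another, I move a single pendant edge from the vertex with $a$ pendant edges to the vertex with $m-1-a$: for $1\le a<m-1-a$ this turns $D_{m,a+1}^k$ into $D_{m,a}^k$, and since $m-1-a>a$ the receiving vertex has the strictly larger Perron entry, so the grafting principle gives $\rho_{\mathcal{A}}(D_{m,a+1}^k)<\rho_{\mathcal{A}}(D_{m,a}^k)$ for all admissible $a$; the same move applied to $D_{m,1}^k$ produces the hyperstar $S_{m,k}$ and yields $\rho_{\mathcal{A}}(D_{m,1}^k)<\rho_{\mathcal{A}}(S_{m,k})$. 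This establishes the full chain of strict inequalities in~(i), and, since $S_{m,k;m-3,1,1}$ and $S_{m,k;m-4,2,1}$ differ by exactly the same kind of single pendant-edge move (legal once $m\ge 6$), it also gives $\rho_{\mathcal{A}}(S_{m,k;m-4,2,1})<\rho_{\mathcal{A}}(S_{m,k;m-3,1,1})$, the strict inequality needed in~(ii).

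For the extremal statement in~(i), let $G$ be a $k$-uniform hypertree of size $m\ge 5$ with $G\not\cong S_{m,k},D_{m,1}^k$. The argument has two stages. First, if the vertices of degree at least $2$ do not all lie in a common edge, choose a vertex $u$ of maximum Perron entry and a branch vertex $v$ at maximum distance from $u$ (so $x_u\ge x_v$ and no edge at $v$ contains $u$), and move every edge at $v$ except the one on the $v$--$u$ path onto $u$; each such move is a legal application of the grafting principle and strictly decreases the number of vertices of degree at least $2$ not lying in a common edge with $u$, so after finitely many $\rho_{\mathcal{A}}$-nondecreasing (and, whenever a move is actually made, strictly increasing) steps $G$ has become some spider $S_{m,k;a_1,\dots,a_s}$. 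Second, inside the spider family a move shifting one pendant edge from a smaller pendant bunch to a larger one is a legal grafting step, and every spider other than $S_{m,k}=S_{m,k;m-1}$ admits one; iterating, $\rho_{\mathcal{A}}(S_{m,k;a_1,\dots,a_s})$ is strictly monotone along the majorization (dominance) order of the partition $(a_1,\dots,a_s)$ of $m-1$, and since $(m-3,2)$ majorizes every partition of $m-1$ except $(m-1)$ and $(m-2,1)$, we obtain $\rho_{\mathcal{A}}(S_{m,k;a_1,\dots,a_s})\le\rho_{\mathcal{A}}(D_{m,2}^k)$ for every spider different from $S_{m,k}$ and $D_{m,1}^k$. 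Combining the two stages yields $\rho_{\mathcal{A}}(G)\le\rho_{\mathcal{A}}(D_{m,2}^k)$, and since each strict step is reversible only at the target, equality forces $G$ to already be a spider and then to equal $D_{m,2}^k$. Part~(ii) follows the same scheme restricted to \emph{non-power} hypertrees: such a hypertree is exactly one in which some edge contains at least three vertices of degree $\ge 2$, so the first-stage reduction must be carried out by moves that keep such an edge, ending at a spider $S_{m,k;a_1,\dots,a_s}$ with $s\ge 3$; among partitions of $m-1$ with at least three positive parts the two largest in dominance order are $(m-3,1,1)$ and $(m-4,2,1)$ (the latter requiring $m\ge 6$), and the majorization monotonicity then gives $\rho_{\mathcal{A}}(G)\le\rho_{\mathcal{A}}(S_{m,k;m-4,2,1})$ with equality only for $G\cong S_{m,k;m-4,2,1}$.

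I expect the main obstacle to be the first-stage reduction and, in~(ii), its non-power-preserving variant: one must specify which edges are moved and where so that the process provably terminates at a single-central-edge spider, never decreases $\rho_{\mathcal{A}}$, produces a valid hypergraph at each step, and -- for~(ii) -- never destroys the edge carrying three branch vertices, all while the nonlinearity of the tensor eigenequations forces even the elementary Perron-entry comparisons (maximum entry at $u$; larger pendant bunch giving a larger entry) to be established by their own swap-and-contradict arguments, and threading the equality/uniqueness characterization through every grafting step is the accompanying bookkeeping.
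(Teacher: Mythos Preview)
The lemma is quoted from \cite{Yuan} and the paper supplies no proof of its own, so your proposal would be an independent argument. Your overall strategy---edge-moving (grafting) together with a majorization comparison inside the spider family $S_{m,k;a_1,\dots,a_s}$---is indeed the approach of \cite{Yuan}, and your treatment of the strict chains $\rho_{\mathcal{A}}(D_{m,2}^k)<\rho_{\mathcal{A}}(D_{m,1}^k)<\rho_{\mathcal{A}}(S_{m,k})$ and $\rho_{\mathcal{A}}(S_{m,k;m-4,2,1})<\rho_{\mathcal{A}}(S_{m,k;m-3,1,1})$, as well as the second-stage majorization among spiders, is sound.

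The gap is precisely where you anticipated it. In your first stage you repeatedly move the non-path edges from the farthest branch vertex $v$ to the vertex $u$ of largest Perron entry, tracking ``the number of branch vertices not lying in a common edge with $u$''. Vanishing of this quantity does \emph{not} force a spider: $u$ may lie in several edges, with different branch vertices sitting in different ones. More seriously, even when the process does reach a spider it may overshoot to $D_{m,1}^k$. For instance, starting from the hyperpath $P_{5,k}$ and taking $u$ to be a central branch vertex, two iterations of your rule produce exactly $D_{5,1}^k$; this only yields $\rho_{\mathcal{A}}(P_{5,k})<\rho_{\mathcal{A}}(D_{5,1}^k)$ and says nothing about $\rho_{\mathcal{A}}(P_{5,k})\le\rho_{\mathcal{A}}(D_{5,2}^k)$. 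The analogous overshoot in part~(ii) lands at $S_{m,k;m-3,1,1}$, again useless for the stated bound.

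The remedy used in \cite{Yuan} is to argue directly with the extremal object: assume $G$ maximises $\rho_{\mathcal{A}}$ among hypertrees avoiding the forbidden list, and exhibit from any non-terminal $G$ a single grafting move whose \emph{target} still avoids the list, contradicting maximality unless $G\cong D_{m,2}^k$ (resp.\ $S_{m,k;m-4,2,1}$). The crucial point your plan misses is controlling the target of each move, not merely its source; one must sometimes move an edge to a vertex other than the one of maximum Perron entry (e.g.\ to another branch vertex in the central edge) so as not to collapse the structure past the forbidden hypertrees.
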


\begin{proof}[{\bf Proof of Theorem~\ref{abc7-180-2}}]
The first part follows from or from
Theorem \ref{LUpper}.

Next, we calculate $\rho_{\mathcal{A}}((D^k_{m,2}))$ and $\rho_{\mathcal{ABC}}(D^k_{m,1})$.

Let $v_1v_2v_3v_4$ be a path of $D_{m,2}$ such that $v_2$ and $v_3$ are of degrees $m-2$ and $3$, respectively.
Let ${\bf x}$ be the $2$-unit positive eigenvector of $\mathcal{A}(D_{m,2})$ corresponding to $\rho=\rho_{\mathcal{A}}(D_{m,2})$.
Let $x_i=x_{v_i}$ for $i=1,\ldots,4$.
By Lemma~\ref{ABC-permutation} and the $(\rho, {\bf x})$-eigenequations of $\mathcal{A}(D_{m,2})$, we have
\begin{align*}
\rho x_1&=x_2,\\
\rho x_2&=(m-3)x_1+x_3,\\
\rho x_3&=x_2+2x_4,\\
\rho x_4&=x_3.
\end{align*}
Then $\rho$ is the largest root of the determinant of the coefficient matrix of
the above homogeneous linear system in the variables $x_1, x_2, x_3, x_4$.
i.e., $\rho_{\mathcal{A}}((D_{m,2}))$ is the largest root of $\rho^{4}-m\rho^2+2m-6=0$.
It follows that $\rho_{\mathcal{A}}((D_{m,2}))=\sqrt{\frac{m+\sqrt{m^2-8m+24}}{2}}$.
Thus by a result in \cite{ZSWB} on the relationship between the eigenvalues of the adjacency tensors of a hypergraph and its $k$th power, we have $\rho_{\mathcal{A}}((D^k_{m,2}))=\sqrt[k]{\frac{m+\sqrt{m^2-8m+24}}{2}}$.

Let $v_1v_2v_3v_4$ be a path of $D_{m,1}$ such that $v_2$ is of degree $m-1$ and $v_3$ is of degree $2$.
Let $\mathbf{y}$ be the $2$-unit positive eigenvector of $\mathcal{ABC}(D_{m,1})$ corresponding to $\rho=\rho_{\mathcal{ABC}}(D_{m,1})$.
Let $y_i=y_{v_i}$ for $i=1,\ldots,4$.
By Lemma~\ref{ABC-permutation} and the $(\rho, \mathbf{y})$-eigenequations of $\mathcal{ABC}(D_{m,1})$, we have
\begin{align*}
\rho y_1&=\sqrt{\frac{m-2}{m-1}}y_2,\\
\rho y_2&=(m-2)\sqrt{\frac{m-2}{m-1}}y_1+ \sqrt{\frac{1}{2}}y_3,\\
\rho y_3&=\sqrt{\frac{1}{2}}y_2+\sqrt{\frac{1}{2}}y_4,\\
\rho y_4&=\sqrt{\frac{1}{2}}y_3.
\end{align*}
Then $\rho_{\mathcal{ABC}}(D_{m,1})$ is the largest root of
\[2\rho^{4}-\frac{2(m^2-3m+3)}{m-1}\rho^2+\frac{(m-2)^2}{m-1}=0,\]
 which
implies that $\rho_{\mathcal{ABC}}(D_{m,1})=\sqrt{\frac{m^2-3m+3+\sqrt{(m-1)^2+(m-2)^4}}{2(m-1)}}$.
From Theorem~\ref{power}, $\rho_{\mathcal{ABC}}(D^k_{m,1})=\sqrt[k]{\frac{m^2-3m+3+\sqrt{(m-1)^2+(m-2)^4}}{2(m-1)}}$.

Now, we prove the result. It is trivial if $m=3$. Suppose that $m\geq 4$.
Let $G$ be a $k$-uniform hypertree different from $S_{m,k}$ of size $m$.
Suppose that $G$ is different from  $D_{m,1}^k$. Then  the maximum degree of $G$ is at most $m-2$.
So, by Theorem \ref{med} and Lemma \ref{tree}(i), we have
\begin{align*}
\rho_{\mathcal{ABC}}(G)&\leq \sqrt[k]{\frac{m-3}{m-2}}\rho_{\mathcal{A}}(G)\\
&\leq \sqrt[k]{\frac{m-3}{m-2}}\sqrt[k]{\frac{m+\sqrt{m^2-8m+24}}{2}}\\
&= \sqrt[k]{\frac{(m-3)(m+\sqrt{m^2-8m+24})}{2(m-2)}}.
\end{align*}
So, it suffices to show that
\[
\sqrt[k]{\frac{(m-3)(m+\sqrt{m^2-8m+24})}{2(m-2)}}
<\rho_{\mathcal{ABC}}(D_{m,1}^k),
\]
which is indeed true as
\begin{align*}
&\quad \rho_{\mathcal{ABC}}^k(D_{m,1}^k)-\frac{(m-3)(m+\sqrt{m^2-8m+24})}{2(m-2)}\\
&=\frac{m^2-3m+3+\sqrt{(m-1)^2+(m-2)^4}}{2(m-1)}-\frac{(m-3)(m+\sqrt{m^2-8m+24})}{2(m-2)}\\
&=\frac{1}{2(m-2)(m-1)}\left(\left(m^2-3m+3+\sqrt{(m-1)^2+(m-2)^4}\right)(m-2)\right.\\
&\quad \left.-(m-3)(m-1)\left(m+\sqrt{m^2-8m+24}\right)\right)\\
&>\frac{1}{2(m-2)(m-1)}\left(\vphantom{\sqrt{m^2}}\left(m^2-3m+3+(m-2)^2\right)(m-2)\right.\\
&\quad \left.-(m-3)(m-1)\left(m+\sqrt{m^2-8m+24}\right)\right)\\
&=\frac{1}{2(m-2)(m-1)}\left(m^3 - 7m^2 + 18m - 14 -(m-3)(m-1)\sqrt{m^2-8m+24}\right)\\
&=\frac{1}{2(m-2)(m-1)}\left((m-3)(m-1)\left(m-3-\sqrt{(m-4)^2+8}\right)+3m-5\right) \\
&>0
\end{align*}
for $m\ge 3$. This completes the proof.
\end{proof}

\begin{Lemma}\label{tree-3-unif+}
For $m\geq 4$ and $k\geq 3$, $\rho_{\mathcal{ABC}}(S_{m,k;m-3,1,1})=b_m^\frac{1}{k}$, where  $b_m$ is the largest root of $\eta(t)=0$,
and
\begin{equation}\label{Eq0803-1}
\eta(t)=t^{3}-\frac{4m^2-19m+27}{4(m-2)}t^{2}+\frac{4m^2-23m+34}{4(m-2)}t-\frac{(m-3)^2}{4(m-2)}.
\end{equation}
\end{Lemma}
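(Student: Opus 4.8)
The plan is to pass to the case $k=3$ via the power construction and then read off $\rho_{\mathcal{ABC}}$ from the eigenequations. First observe that $S_{m,k;m-3,1,1}=\bigl(S_{m,3;m-3,1,1}\bigr)^{k}$: inserting $k-3$ new degree-one vertices into every edge of the $3$-uniform hypertree $H:=S_{m,3;m-3,1,1}$, whose central edge $\{v_1,v_2,v_3\}$ carries the $m-3$ pendant edges at $v_1$ and one pendant edge at each of $v_2,v_3$, reproduces $S_{m,k;m-3,1,1}$ exactly. The eigenvector built in the proof of Theorem~\ref{power} is positive once one starts from the (positive) Perron eigenvector of $\mathcal{ABC}(H)$, so Theorem~\ref{power} together with Lemma~\ref{PF}(ii) gives $\rho_{\mathcal{ABC}}(S_{m,k;m-3,1,1})=\rho_{\mathcal{ABC}}(H)^{3/k}$; hence it suffices to prove $\rho_{\mathcal{ABC}}(H)=b_m^{1/3}$. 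In $H$ one has $d_{v_1}=m-2$, $d_{v_2}=d_{v_3}=2$ and all remaining vertices have degree $1$, so $\omega_H$ equals $\frac{m-1}{4(m-2)}$ on the central edge, $\frac{m-3}{m-2}$ on the pendant edges at $v_1$, and $\frac12$ on the pendant edges at $v_2,v_3$.

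Let $\rho=\rho_{\mathcal{ABC}}(H)$ with positive $3$-unit eigenvector $\mathbf{x}$. By Lemma~\ref{ABC-permutation}, $\mathbf{x}$ is constant on the orbits of the automorphism group of $H$, so it is determined by $a:=x_{v_1}$, $b:=x_{v_2}=x_{v_3}$, the common value on the pendant vertices of the edges at $v_1$, and the common value on the pendant vertices of the edges at $v_2,v_3$. The eigenequations at a pendant vertex of each of the two types express these last two values as $\sqrt[3]{\tfrac{m-3}{m-2}}\,a/\rho$ and $\sqrt[3]{\tfrac12}\,b/\rho$; substituting them into the eigenequations at $v_1$ and at $v_2$ gives
\[
\rho\,a^{2}=\sqrt[3]{\tfrac{m-1}{4(m-2)}}\;b^{2}+\frac{(m-3)^{2}}{(m-2)\rho^{2}}\,a^{2},\qquad
\rho\,b^{2}=\sqrt[3]{\tfrac{m-1}{4(m-2)}}\;ab+\frac{b^{2}}{2\rho^{2}} .
\]
Solving the first relation for $(b/a)^{2}$ and the second for $b/a$ (both valid since $a,b>0$), equating them, and clearing the cube roots with $\bigl(\sqrt[3]{\tfrac{m-1}{4(m-2)}}\bigr)^{3}=\tfrac{m-1}{4(m-2)}$, one obtains, with $t:=\rho^{3}$,
\[
(m-1)t^{2}=(2t-1)^{2}\bigl((m-2)t-(m-3)^{2}\bigr),
\]
which is precisely $4(m-2)\eta(t)=0$. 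Thus $\rho^{3}$ is a root of $\eta$, and in particular $\rho^{3}\le b_m$.

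For the reverse inequality I would run this computation backwards. Set $t=b_m$, $\rho=t^{1/3}$, recover $b/a$ from the second displayed relation, put $a=1$, $b=b/a$, and define the pendant-vertex values by the same formulas as above; using $4(m-2)\eta(t)=0$ one checks directly that $\mathbf{x}$ then satisfies the eigenequations at all four orbits of $\mathcal{ABC}(H)$, so after normalization it is a positive $3$-unit eigenvector belonging to the eigenvalue $b_m^{1/3}$, whence $\rho_{\mathcal{ABC}}(H)=b_m^{1/3}$ by Lemma~\ref{PF}(ii). The step requiring care—and the only one that is not mechanical, as it is exactly where one pins down that the Perron root corresponds to the \emph{largest} root of $\eta$—is checking that the reconstructed quantities are positive reals; this amounts to $b_m>\frac12$, which follows from the factored form $4(m-2)\eta(t)=(2t-1)^{2}\bigl((m-2)t-(m-3)^{2}\bigr)-(m-1)t^{2}$, since $4(m-2)\eta(\tfrac12)=-\tfrac{m-1}{4}<0$ while $4(m-2)\eta(t)\to+\infty$ as $t\to\infty$, so the largest root exceeds $\frac12$; then $4(m-2)\eta(b_m)=0$ with $b_m>\frac12$ also forces $(m-2)b_m-(m-3)^{2}>0$, making $(b/a)^{2}$ positive. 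The same eigenequation analysis can instead be carried out directly for general $k$—the only new feature being an additional orbit, that of the $k-3$ degree-one vertices of the central edge, whose common eigenvector value is removed using its own eigenequation—which bypasses Theorem~\ref{power} entirely.
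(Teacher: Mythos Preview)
Your argument is correct and follows essentially the same route as the paper: reduce to $k=3$ via the power construction (Theorem~\ref{power}), set up the $(\rho,\mathbf{x})$-eigenequations on the automorphism orbits of $S_{m,3;m-3,1,1}$, and eliminate to obtain $\eta(\rho^3)=0$. You are more careful than the paper in two places---explaining why Theorem~\ref{power} transfers the Perron eigenvalue (positivity of the constructed eigenvector), and why $\rho^3$ is the \emph{largest} root of $\eta$ (the reconstruction step with $b_m>\tfrac12$)---both of which the paper leaves implicit.
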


\begin{proof}
Let $v_1e_1v_2e_2v_3e_3v_4$ be a path of $S_{m,3;m-3,1,1}$ such that $v_2$ is of degree $m-2$ and
$v_3$ is of degree $2$.
Let ${\bf x}$ be the $3$-unit positive eigenvector of $\mathcal{ABC}(S_{m,3;m-3,1,1})$ corresponding to $\rho=\rho_{\mathcal{ABC}}(S_{m,3;m-3,1,1})$.
Let $x_i=x_{v_i}$ for $i=1,2,3,4$.
By Lemma~\ref{ABC-permutation} and the $(\rho, {\bf x})$-eigenequations  of $\mathcal{ABC}(S_{m,3;m-3,1,1})$, we have
\begin{align}
\rho x_1^2&=\sqrt[3]{\frac{m-3}{m-2}}x_1x_2,\label{Eq0810-1-1}\\
\rho x_2^2&=(m-3)\sqrt[3]{\frac{m-3}{m-2}}x_1^2+ \sqrt[3]{\frac{m-1}{4(m-2)}}x_3^2,\label{Eq0810-1-2}\\
\rho x_3^2&=\sqrt[3]{\frac{m-1}{4(m-2)}}x_2x_3+\sqrt[3]{\frac{1}{2}}x_4^2,\label{Eq0810-1-3}\\
\rho x_4^2&=\sqrt[3]{\frac{1}{2}}x_3x_4.\label{Eq0810-1-4}
\end{align}
By \eqref{Eq0810-1-1} and \eqref{Eq0810-1-4}, we have $
x_1=\frac{\sqrt[3]{\frac{m-3}{m-2}}}{\rho}x_2$
and
$x_4=\frac{\sqrt[3]{\frac{1}{2}}}{\rho}x_3$.
Substituting the expression for $x_4$ in \eqref{Eq0810-1-3},
we have $x_3=\frac{\sqrt[3]{\frac{m-1}{4(m-2)}}\,\rho^2  x_2}{\rho^3-\frac{1}{2}}$.
Eliminating  $x_1$ and $x_3$ from \eqref{Eq0810-1-2}, we have
\[
\rho^3\left(\rho^3-\frac{1}{2}\right)^2- \frac{(m-3)^2}{m-2}\left(\rho^3-\frac{1}{2}\right)^2 -\frac{m-1}{4(m-2)}\rho^6=0,
\]
i.e.,
\[
\eta(\rho^3)=0.
\]
So $\rho_{\mathcal{ABC}}(S_{m,3;m-3,1,1})$ is the largest root of $\eta(t^3)=0$. Let $b_m$ be the largest root of $\eta(t)=0$. Then $\rho_{\mathcal{ABC}}(S_{m,3;m-3,1,1})=b_m^{\frac{1}{3}}$.
By  Theorem~\ref{power}, $\rho_{\mathcal{ABC}}(S_{m,k;m-3,1,1})=\rho_{\mathcal{ABC}}^{\frac{3}{k}}(S_{m,3;m-3,1,1})=b_m^{\frac{1}{k}}$.
\end{proof}

By Lemma~\ref{tree-3-unif+},
$\rho_{\mathcal{ABC}}(S_{m,3;m-3,1,1})$ is the largest root of $\eta(t^3)=0$,
where
$\eta(t)$ is given in \eqref{Eq0803-1}.

We introduce four $3$-uniform hypergraphs.
Let $T_{m,1}=S_{m,3;m-4,2,1}$ for $m\geq 6$.
For $m\geq 5$,
let $T_{m,2}$ be the $3$-uniform hypertree obtained by adding a pendant edge at a pendant vertex in a pendant edge at the vertex of degree $m-3$ in $S_{m-1,3;m-4,1,1}$,
$T_{m,3}$ be the $3$-uniform hypertree obtained by adding respectively one pendant edge at two distinct pendant vertices in a pendant edge at the vertex of degree $2$ in $D_{m-2,1}^3$ and
$T_{m,4}$ be the $3$-uniform hypertree obtained by adding a pendant edge at a pendant vertex in a pendant edge at the vertex of degree $2$ in $S_{m-1,3;m-4,1,1}$.
Obviously, $T_{5,2}\cong T_{5,3} \cong T_{5,4}$.

\begin{Lemma}\label{tree-3-unif} The following statements are true:

(i) For $m\geq 6$, $\max\{\rho_{\mathcal{ABC}}(T_{m,1}),\rho_{\mathcal{ABC}}(T_{m,2})\}<\rho_{\mathcal{ABC}}(S_{m,3;m-3,1,1})$.

(ii) For $m\geq 5$, $\max\{\rho_{\mathcal{ABC}}(T_{m,3}), \rho_{\mathcal{ABC}}(T_{m,4})\}<\rho_{\mathcal{ABC}}(S_{m,3;m-3,1,1})$.
\end{Lemma}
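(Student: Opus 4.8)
The plan is to determine $\rho_{\mathcal{ABC}}(T_{m,j})^{3}$ exactly, as the largest root of a small explicit polynomial in $t=\rho^{3}$, for each $j$, and then to show that this root is strictly below $b_{m}$, the largest root of $\eta$ in \eqref{Eq0803-1}. The softer estimate $\rho_{\mathcal{ABC}}(T_{m,j})\le\sqrt[3]{\tfrac{m-4}{m-3}}\,\rho_{\mathcal{A}}(T_{m,j})$ coming from Theorem~\ref{med} (all four $T_{m,j}$ have maximum degree $m-3$), combined with the adjacency bounds of Lemma~\ref{tree}, turns out to be just slightly too lossy to conclude, so an exact handle on $\rho_{\mathcal{ABC}}(T_{m,j})$ is really needed. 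In every case the reduction rests on Lemma~\ref{ABC-permutation}: the positive Perron vector of $\mathcal{ABC}(T_{m,j})$ is constant on the orbits of $\mathrm{Aut}(T_{m,j})$, hence is pinned down by its values at a few ``core'' vertices — a vertex of degree $m-3$, its one or two degree-$2$ neighbours, and, where present, the degree-$2$ vertex of an attached loose pendant path — while each pendant-vertex coordinate equals $\sqrt[3]{\omega}\,x_{u}/\rho$ by the eigenequation of its pendant edge, $u$ being the unique vertex of degree $>1$ in that edge.

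I would do $T_{m,1}=S_{m,3;m-4,2,1}$ first. Writing the $\mathcal{ABC}$-eigenequations at the three core vertices $u_{1},u_{2},u_{3}$ of degrees $m-3,3,2$ and substituting the pendant coordinates gives $\bigl(\rho-\tfrac{(m-4)^{2}}{(m-3)\rho^{2}}\bigr)x_{u_{1}}^{2}=\sqrt[3]{\omega_{0}}\,x_{u_{2}}x_{u_{3}}$ together with the two analogous relations carrying $\tfrac{4}{3\rho^{2}}$ and $\tfrac{1}{2\rho^{2}}$ in place of $\tfrac{(m-4)^{2}}{(m-3)\rho^{2}}$, where $\omega_{0}=\tfrac{m-1}{6(m-3)}$ is the weight of the central edge. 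Multiplying the three and cancelling the nonzero factor $x_{u_{1}}^{2}x_{u_{2}}^{2}x_{u_{3}}^{2}$ produces, with $t=\rho^{3}$, the cubic $\bigl(t-\tfrac{(m-4)^{2}}{m-3}\bigr)\bigl(t-\tfrac{4}{3}\bigr)\bigl(t-\tfrac{1}{2}\bigr)=\tfrac{m-1}{6(m-3)}\,t^{2}$; since the Perron eigenvector is positive, $\rho_{\mathcal{ABC}}(T_{m,1})^{3}$ is this cubic's largest root.

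For $T_{m,2},T_{m,3},T_{m,4}$ the scheme is the same, with one or two auxiliary eliminations depending on $j$. A loose pendant path — a degree-$2$ vertex lying in a genuine pendant edge (two symmetric leaves) and in one further edge ending at a dead-end degree-$1$ vertex — is handled by solving for the dead-end coordinate from the two relevant eigenequations and folding the whole branch into the carrier's eigenequation as a single rational term in $\rho$ times the square of a core coordinate; a degree-$1$ vertex sitting in a bridge edge between two non-leaf core vertices is removed by multiplying the eigenequations at the two bridge endpoints, whereupon its a priori fractional coordinate cancels against the matching product of core coordinates. After these steps and cancelling the surviving product of core coordinates, one obtains for each $j$ an explicit polynomial of degree $\le3$ in $t=\rho^{3}$ whose largest root is $\rho_{\mathcal{ABC}}(T_{m,j})^{3}$ (for instance $4\bigl((m-3)t-(m-4)^{2}\bigr)(t-1)=(m-2)t$ for $T_{m,4}$). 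A convenient by-product: when $m=5$ the hypergraphs $T_{m,2},T_{m,3},T_{m,4}$ are isomorphic and all three polynomials collapse to $8t^{2}-15t+4=0$, so the case $m=5$ of (ii) is a single check and Lemma~\ref{tree}(ii), which requires $m\ge6$, is not invoked.

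It remains to show, for each $j$, that the largest root of the derived polynomial $p_{j}$ is $<b_{m}$, which then gives $\rho_{\mathcal{ABC}}(T_{m,j})<b_{m}^{1/3}=\rho_{\mathcal{ABC}}(S_{m,3;m-3,1,1})$ by Lemma~\ref{tree-3-unif+}. I would divide $p_{j}$ by $\eta$ with remainder, so that $p_{j}(b_{m})$ equals a polynomial of degree $\le2$ evaluated at $b_{m}$; then I would localise $b_{m}$ in an explicit interval using the signs of $\eta$ at two simple rational points, deduce $p_{j}(b_{m})>0$, and, since $p_{j}$ has positive leading coefficient and (by sign tests) $b_{m}$ exceeds $p_{j}$'s remaining real roots, conclude $\rho_{\mathcal{ABC}}(T_{m,j})^{3}<b_{m}$. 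The main obstacle is precisely this comparison: all the polynomials involved carry $m$-dependent coefficients, so both the localisation of $b_{m}$ and the positivity of each $p_{j}(b_{m})$ reduce to several rational inequalities in the parameter $m$ that must be verified for all $m\ge6$ (resp.\ $m\ge5$); one must also confirm in each case that the Perron value is genuinely the \emph{largest} root of the reduced polynomial (its other roots not admitting a positive solution of the reduced system). This bookkeeping, rather than any single difficult idea, is where the bulk of the work lies.
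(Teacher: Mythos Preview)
Your plan is correct and is essentially the paper's own argument: for each $T_{m,j}$ the paper uses the eigenequations and Lemma~\ref{ABC-permutation} to reduce to an explicit cubic $h_j(t)$ in $t=\rho^3$ (your cubic for $T_{m,1}$ is exactly the paper's $h_1$, and your quadratic for $T_{m,4}$ is precisely the nontrivial factor of the paper's $h_4=(2t-1)\cdot(\text{your quadratic})/8(m-3)$), and then compares its largest root with $b_m$ via a remainder calculation and sign checks on an explicit interval in $m$. The only difference is cosmetic: the paper writes $\eta=h_j-r_j$, localises the largest root $t_j$ of $h_j$, and checks $\eta(t_j)<0$ (which forces $t_j<b_m$ immediately, since $\eta>0$ to the right of its largest root), whereas you reverse the roles and localise $b_m$ to check $h_j(b_m)>0$, which is why you need the extra side-check you flag about $b_m$ not falling between smaller roots of $h_j$; the paper's direction saves that step.
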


\begin{proof}
First, we prove (i).

Let $v_1e_1v_2e_2v_3e_3v_4$ be a path in $T_{m,1}$ such that $v_2$ and $v_3$ are the vertices of degrees
$m-3$ and $3$, respectively. Let $v_5$ is a vertex of degrees $2$ in $e_2$, and $v_6$ be a pendant vertex in the pendant edge at $v_5$.
Let $\rho_1=\rho_{\mathcal{ABC}}(T_{m,1})$.
Let ${\bf x}$ be the $3$-unit positive eigenvector of $\mathcal{ABC}(T_{m,1})$ corresponding to $\rho_1$.
Let $x_i=x_{v_i}$ for $i=1,\ldots,6$.
By Lemma~\ref{ABC-permutation} and the $(\rho_1, {\bf x})$-eigenequations of $\mathcal{ABC}(T_{m,1})$, we have
\begin{align}
\rho_1 x_1^{2}&=\sqrt[3]{\frac{m-4}{m-3}}x_1x_2,\label{Eq0810-2-1}\\
\rho_1 x_2^{2}&=(m-4)\sqrt[3]{\frac{m-4}{m-3}}x_1^2+\sqrt[3]{\frac{m-1}{6(m-3)}}x_3x_5,\label{Eq0810-2-2}\\
\rho_1 x_3^{2}&=\sqrt[3]{\frac{m-1}{6(m-3)}}x_2x_5+2\sqrt[3]{\frac{2}{3}}x_4^2,\label{Eq0810-2-3}\\
\rho_1 x_4^{2}&=\sqrt[3]{\frac{2}{3}}x_3x_4,\label{Eq0810-2-4}\\
\rho_1 x_5^{2}&=\sqrt[3]{\frac{m-1}{6(m-3)}}x_2x_3+\sqrt[3]{\frac{1}{2}}x_6^2,\label{Eq0810-2-5}\\
\rho_1 x_6^{2}&=\sqrt[3]{\frac{1}{2}}x_5x_6.\label{Eq0810-2-6}
\end{align}
By \eqref{Eq0810-2-1}, we have $x_1=\frac{\sqrt[3]{\frac{m-4}{m-3}}}{\rho_1}x_2$.
Combining \eqref{Eq0810-2-4} and \eqref{Eq0810-2-6} yields $x_4=\frac{\sqrt[3]{\frac{2}{3}}x_3}{\rho_1}$
and $x_6=\frac{\sqrt[3]{\frac{1}{2}}x_5}{\rho_1}$.
Eliminating $x_4$ and $x_6$ from \eqref{Eq0810-2-3} and \eqref{Eq0810-2-5}, respectively,
we have
\[
\left(\rho_1^3-\frac{4}{3}\right)x^2_3=\sqrt[3]{\frac{m-1}{6(m-3)}}x_2x_5\rho_1^2
\]
and
\[
\left(\rho_1^3-\frac{1}{2}\right)x^2_5=\sqrt[3]{\frac{m-1}{6(m-3)}}x_2x_3\rho_1^2,
\]
so
$x_3=\frac{\sqrt[3]{\frac{m-1}{6(m-3)}}\rho_1^2}{\left(\rho_1^3-\frac{1}{2}\right)^\frac{1}{3}\left(\rho_1^3-\frac{4}{3}\right)^\frac{2}{3}}x_2$
and
$x_5=\frac{\sqrt[3]{\frac{m-1}{6(m-3)}}\rho_1^2}{\left(\rho_1^3-\frac{1}{2}\right)^\frac{2}{3}\left(\rho_1^3-\frac{4}{3}\right)^\frac{1}{3}}x_2 $.
Eliminating $x_1$, $x_3$ and $x_5$ from \eqref{Eq0810-2-2}, we have
\[
h_1(\rho_1^3)=0,
\]
where
\[
h_1(t)=t^3-\frac{3m^2- 18m+ 31}{3(m-3)}t^2+\frac{11m^2-84m+ 164}{6(m-3)}t-\frac{ 2m^2- 16m + 32}{3(m-3)}.
\]
So $\rho_1$ is the largest root of $h_1(t^3)=0$.

Bearing in mind the expression for $\eta(t)$ in \eqref{Eq0803-1}, we have
\[
\eta(t)=h_1(t)-\frac{(m - 5)p_1(t)}{12(m - 2)(m - 3)},
\]
where
\[
p_1(t)=(3m-1)t^2 +(10m^2- 57m+ 70)t - 5m^2 + 28m - 35.
\]
Let $t_1$ be the largest root of $h_1(t)$.

If $m=6$, then $h_1(t)=\frac{9t^3-31t^2+28t-8}{9}$.
It is easy to seen that  $h_1^{(1)}(3)=\frac{85}{9}$,
$h_1^{(2)}(3)=\frac{100}{9}$ and $h_1^{(3)}(3)=6$.
Since $h_1^{(3-i)}(t)$ is strictly increasing for $t\geq 3$ as $h_1^{(4-i)}(3)>0$ with $i=1,2,3$,
$h_1(t)$ is strictly increasing for $t\geq 3$.
Noting that $h_1(2)=-\frac{4}{9}$ and $h_1(3)=\frac{40}{9}$,  $t_1$ lies in $ (2,3)$.
As $p_1(t)$ is increasing for $t\in [2,3]$ and $p_1(t)>p_1(2)=13$,
we have $\eta(t_1)=h_1(t_1)-\frac{p_1(t_1)}{144}<-\frac{p_1(2)}{144}<0$.
So $t_1$ is less than the largest root of $\eta(t)=0$.

Suppose that $m\geq 7$.
Note that
\begin{align*}
h_1^{(1)} (t)&=3t^2-\frac{6m^2 - 36m + 62}{3(m - 3)}t +\frac{11m^2-84m+164}{6(m - 3)},\\
h_1^{(2)} (t)&=6t-\frac{6m^2 - 36m + 62}{3(m - 3)},\\
h_1^{(3)} (t)&=6.
\end{align*}
Then
\begin{align*}
h_1^{(1)} (m-4)&=\frac{6m^3 - 67m^2 + 224m - 204}{6(m - 3)}>0,\\
h_1^{(2)} (m-4)&=\frac{2(6m^2 - 45m + 77)}{3(m - 3)}>0,
\end{align*}
and $h_1^{(2)} (t)$ is strictly increasing for $t\geq m-4$.
Therefore  $h_1^{(1)}(t)$ is strictly increasing for $t\geq m-4$
and thus
$h_1(t)$ is increasing for $t\geq m-4$.
Note  that
\begin{align*}
h_1 (m-5)&=-\frac{m^3 - 5m^2 - 36m + 184}{6(m - 3)}<0,\\
h_1 (m-4)&=\frac{(m - 4)(5m^2 - 54m + 140)}{6(m - 3)}>0.
\end{align*}
So $t_1$ lies in $(m-5,m-4)$.
It is easy to see that $p_1(t)$ is strictly increasing for $t\in [m-5,m-4]$.
So
\[
p_1(t)> p_1(m-5)=13m^3 - 143m^2 + 468m - 410>0
\]
for $t\in (m-5,m-4)$.
Then 
\[\eta(t_1)=h_1(t_1)-\frac{(m - 5)p_1(t_1)}{12(m - 2)(m - 3)}<0.\]
So $t_1$ is less than the largest root of $\eta(t)=0$.
By Lemma~\ref{tree-3-unif+},
$\rho_{\mathcal{ABC}}(S_{m,3;m-3,1,1})$ is the largest root of $\eta(t^3)=0$.
Thus
$\rho_1^3=t_1<\rho^3(S_{m,3;m-3,1,1})$. That is,
$\rho_{\mathcal{ABC}}(T_{m,1})<\rho_{\mathcal{ABC}}(S_{m,3;m-3,1,1})$.

Let $v_1e_1v_2e_2v_3e_3v_4e_4v_5$ be a path in $T_{m,2}$ such that there is a pendant edge at a vertex in $e_3\setminus\{v_3,v_4\}$. Let $v_6$ and $v_7$ be  pendant vertices in $e_2$ and a pendant edge at $v_3$, respectively.
Let $\rho_2=\rho_{\mathcal{ABC}}(T_{m,2})$.
Let ${\bf x}$ be the $3$-unit positive eigenvector of $\mathcal{ABC}(T_{m,2})$ corresponding to $\rho_2$.
Let $x_i=x_{v_i}$ for $i=1,\ldots,7$.
By Lemma~\ref{ABC-permutation} and the $(\rho_2, {\bf x})$-eigenequations of $\mathcal{ABC}(T_{m,2})$, we have
\begin{align}
\rho_2 x_1^{2}&=\sqrt[3]{\frac{1}{2}}x_1x_2,\label{Eq0810-3-1}\\
\rho_2 x_2^{2}&=\sqrt[3]{\frac{1}{2}}x_1^2+\sqrt[3]{\frac{1}{2}}x_3x_6,\label{Eq0810-3-2}\\
\rho_2 x_3^{2}&=\sqrt[3]{\frac{1}{2}}x_2x_6+\sqrt[3]{\frac{m-2}{4(m-3)}}x_4^2
+(m-5)\sqrt[3]{\frac{m-4}{m-3}}x_7^2,\label{Eq0810-3-3}\\
\rho_2 x_4^{2}&=\sqrt[3]{\frac{m-2}{4(m-3)}}x_3x_4+\sqrt[3]{\frac{1}{2}}x_5^2,\label{Eq0810-3-4}\\
\rho_2 x_5^{2}&=\sqrt[3]{\frac{1}{2}}x_4x_5,\label{Eq0810-3-5}\\
\rho_2 x_6^{2}&=\sqrt[3]{\frac{1}{2}}x_2x_3,\label{Eq0810-3-6}\\
\rho_2 x_7^{2}&=\sqrt[3]{\frac{m-4}{m-3}}x_3x_7.\label{Eq0810-3-7}
\end{align}
By \eqref{Eq0810-3-1}, \eqref{Eq0810-3-2} and \eqref{Eq0810-3-6}, we have $x_2=\frac{\sqrt[3]{\frac{1}{2}}\rho_2}{\left(\rho_2^3-\frac{1}{2}\right)^\frac{2}{3}}x_3$ and
$x_6=\frac{\sqrt[3]{\frac{1}{2}}}{\left(\rho_2^3-\frac{1}{2}\right)^\frac{1}{3}}x_3$.
By \eqref{Eq0810-3-4} and \eqref{Eq0810-3-5}, we have
$x_4=\frac{\sqrt[3]{\frac{m-2}{4(m-3)}}\rho_2^2}{\rho_2^3-\frac{1}{2}}x_3$.
By \eqref{Eq0810-3-7}, it is easily seen that $x_7=\frac{\sqrt[3]{\frac{m-4}{m-3}}}{\rho_2}x_3$.
Eliminating $x_2, x_4, x_6$ and $x_7$ from \eqref{Eq0810-3-3},
it follows that
\[
h_2(\rho_2^3)=0,
\]
where
\[
h_2(t)=t^3-\frac{4m^2-29m+ 60}{4(m-3)}t^2+\frac{2m^2-17m+37}{2(m-3)}t-\frac{m^2-9m + 20}{4(m-3)}.
\]
So $\rho_2$ is the largest root of $h_2(t^3)=0$.

It is easily seen that
\[
\eta(t)=h_2(t)-\frac{p_2(t)}{4(m - 2)(m - 3)},
\]
where
\[
p_2(t)=(6m^2- 34m+ 39)t^2 - (7m^2- 39m+ 46)t + 2m^2 - 11m + 13.
\]
As
\begin{align*}
h_2^{(1)} (t)&=3t^2-\frac{4m^2-29m+ 60}{2(m - 3)}t +\frac{2m^2-17m+37}{2(m - 3)},\\
h_2^{(2)} (t)&=6t-\frac{4m^2-29m+ 60}{2(m - 3)},\\
h_2^{(3)} (t)&=6,
\end{align*}
we have
\begin{align*}
h_2^{(1)} (m-4)&=\frac{2m^3 - 19m^2 + 47m - 11}{2(m - 3)}>0,\\
h_2^{(2)} (m-4)&=\frac{8m^2 - 55m + 84}{2(m - 3)}>0.
\end{align*}
Since $h_2^{(3-i)}(t)$ is strictly increasing for $t\geq m-4$ as $h_2^{(4-i)}(m-4)>0$ with $i=1,2,3$,
$h_2(t)$ is strictly increasing for $t\geq m-4$.

Note  that
\begin{align*}
h_2 (m-5)&=-\frac{3m^3 - 37m^2 + 109m + 32}{4(m - 3)}<0 ~\text{if}~m=6,7,8,\\
h_2 (m-6)&=-\frac{3m^3 - 37m^2 + 109m + 32}{4(m - 3)}<0~\text{if}~m\geq 9,\\
h_2 (m-4)&=\frac{(m - 4)(5m^2 - 51m + 127)}{4(m - 3)}>0.
\end{align*}
Then the largest root of $h_2(t)=0$, say $t_2$ lies in $(m-5,m-4)$ if m=6,7,8 and lies in $(m-6,m-4)$ if $m\geq 9$.
It is easy to see that $p_2(t)$ is strictly increasing for $t\in [m-6,m-4]$.
So
\[
p_2(t)> p_2(m-6)=6m^4 - 113m^3 + 746m^2 - 1983m + 1693>0
\]
for $t\in (m-6,m-4)$.
Then
\[\eta(t_2)=h_2(t_2)-\frac{p_2(t_2)}{4(m - 2)(m - 3)}<-\frac{p_2(m-6)}{4(m - 2)(m - 3)}<0.\]
So  $t_2$ is less than the largest root of $\eta(t)=0$.
By Lemma \ref{tree-3-unif+}, we have  $\rho_{\mathcal{ABC}}(T_{m,2})<\rho_{\mathcal{ABC}}(S_{m,3;m-3,1,1})$.

Now, we prove (ii).

Let $v_1e_1v_2e_2v_3e_3v_4e_4v_5$ be a path in $T_{m,3}$ such that $v_2$ is the vertex of degree $m-3$. Let $v_6$ be a pendant vertex in $e_2$.
Let $\rho_3=\rho_{\mathcal{ABC}}(T_{m,3})$.
Let ${\bf x}$ be the $3$-unit positive eigenvector of $\mathcal{ABC}(T_{m,3})$ corresponding to $\rho_3$.
Let $x_i=x_{v_i}$ for $i=1,\ldots,6$.
By Lemma~\ref{ABC-permutation} and the $(\rho_3, {\bf x})$-eigenequations of $\mathcal{ABC}(T_{m,3})$, we have
\begin{align}
\rho_3 x_1^{2}&=\sqrt[3]{\frac{m-4}{m-3}}x_1x_2,\label{Eq0810-6-1}\\
\rho_3 x_2^{2}&=(m-4)\sqrt[3]{\frac{m-4}{m-3}}x_1^2+\sqrt[3]{\frac{1}{2}}x_3x_6,\label{Eq0810-6-2}\\
\rho_3x_3^{2}&=\sqrt[3]{\frac{1}{2}}x_2x_6+\sqrt[3]{\frac{3}{8}}x_4^2,\label{Eq0810-6-3}\\
\rho_3 x_4^{2}&=\sqrt[3]{\frac{3}{8}}x_3x_4+\sqrt[3]{\frac{1}{2}}x_5^2,\label{Eq0810-6-4}\\
\rho_3 x_5^{2}&=\sqrt[3]{\frac{1}{2}}x_4x_5,\label{Eq0810-6-5}\\
\rho_3 x_6^{2}&=\sqrt[3]{\frac{1}{2}}x_2x_3.\label{Eq0810-6-6}
\end{align}
By \eqref{Eq0810-6-4} and \eqref{Eq0810-6-5}, we have $x_4=\frac{\sqrt[3]{\frac{3}{8}}\rho_3^3}{\rho_3^3-\frac{1}{2}}x_3$, which together with \eqref{Eq0810-6-3} and \eqref{Eq0810-6-6},  implies $x_3=\frac{\sqrt[3]{\frac{1}{2}}
\left(\rho_3^3-\frac{1}{2}\right)^\frac{4}{3}}{\rho_3
\left(\left(\rho_3^3-\frac{1}{2}\right)^3-\frac{3}{8}\rho_3^3\right)^\frac{2}{3}}x_2$
and
$x_6=\frac{\sqrt[3]{\frac{1}{2}}\left(\rho_3^3-\frac{1}{2}\right)^\frac{2}{3}}{\rho_3\left(\left(\rho_3^3-\frac{1}{2}\right)^2-\frac{3}{8}\rho_3^3\right)^\frac{1}{3}}x_2$.
Eliminating $x_1$, $x_3$ and $x_6$ from \eqref{Eq0810-6-2}, we have
\[
h_3(t^3)=0,
\]
where
\[
h_3(t)=t^3-\frac{8m^2-49m+ 83}{8(m-3)}t^2+\frac{11m^2-82m+158}{8(m-3)}t-\frac{2m^2-15m + 29}{8(m-3)}.
\]
So $\rho_3$ is the largest root of $h_3(t^3)=0$.

Note that
\[
\eta(t)=h_3(t)-\frac{(m-4)p_3(t)}{8(m - 2)(m - 3)},
\]
where
\[
p_3(t)=(3m-1)t^2 +(3m^2 - 22m+28)t + m - 1.
\]
Let $t_3$ be the maximum root of $h_3(t)=0$.

If $m=5$, then $t_3=\frac{15+\sqrt{97}}{16}$ as $h_3(t)=\frac{(2t-1)(8t^2-15t+4)}{16}$,
and so $\eta(t_3)=-\frac{77(15+\sqrt{97})}{3072}+\frac{1}{16}\approx -0.5603$.
If $m=6$, then $h_3(t)=\frac{24t^3-77t^2+62t-11}{24}$.
Note that $h_3(t)$ is strictly increasing for $t\geq 3$, because
$h_3^{(3-i)}(t)$ is strictly increasing for $t\geq 3$
as $h_3^{(4-i)}(3)>0$ with $i=1,2,3$.
As $h_3(2)<0$ and $h_3(3)>0$,  $t_3$ lies in $ (2,3)$.
Since $p_3(t)$ is increasing for $t\in [2,3]$ and $p_3(2)>0$,
$\eta(t_3)=h_3(t_3)-\frac{p_3(t_3)}{48}<-\frac{p_3(2)}{48}<0$.
So $t_3$ is less than the largest root of $\eta(t)=0$ for $m=5,6$.

Suppose that $m\geq 7$.
As
\begin{align*}
h_3^{(1)} (t)&=3t^2-\frac{8m^2-49m+ 83}{4(m - 3)}t +\frac{11m^2-82m+158}{8(m - 3)},\\
h_3^{(2)} (t)&=6t-\frac{8m^2-49m+ 83}{4(m - 3)},\\
h_3^{(3)} (t)&=6,
\end{align*}
we have
\begin{align*}
h_3^{(1)} (m-4)&=\frac{8m^3 - 91m^2 + 320m - 330}{8(m - 3)}>0,\\
h_3^{(2)} (m-4)&=\frac{16m^2 -119m + 205}{4(m - 3)}>0.
\end{align*}
Since $h_3^{(3-i)}(t)$ is strictly increasing for $t\geq m-4$ as $h_1^{(4-i)}(m-4)>0$ with $i=1,2,3$,
$h_3(t)$ is strictly increasing for $t\geq m-4$.
Note  that
\begin{align*}
h_3 (m-5)&=-\frac{2m^3 - 24m^2 + 81m - 53}{4(m - 3)}<0,\\
h_3 (m-4)&=\frac{4m^3 - 59m^2 + 285m - 453}{8(m - 3)}>0.
\end{align*}
So $t_3$ lies in $(m-5,m-4)$.
As $p_3(t)$ is strictly increasing for $t\in [m-5,m-4]$,
we have
\[
p_3(t)\geq p_3(m-5)=2(3m^3 - 34m^2 + 112m - 83)>0
\]
for $t\in (m-5,m-4)$.
Then
\[\eta(t_3)=h_3(t_3)-\frac{(m - 4)p_3(t_3)}{8(m - 2)(m - 3)}<0.\]
So $t_3$ is less than the largest root of $\eta(t)=0$.
Thus $\rho_{\mathcal{ABC}}(T_{m,3})<\rho_{\mathcal{ABC}}(S_{m,3;m-3,1,1})$.

As $T_{5,3}\cong T_{5,4}$ and $\rho_{\mathcal{ABC}}(T_{5,3})<\rho_{\mathcal{ABC}}(S_{5,3;2,1,1})$, it is sufficient to consider $T_{m,4}$ for $m\geq 6$.
Let $v_1e_1v_2e_2v_3e_3v_4e_4v_5$ be a path in $T_{m,4}$ such that $v_2$ is the vertex of degree $m-3$. Let $v_6$ be the vertex in $e_2\setminus\{v_2,v_3\}$, and $v_7$ and $v_8$ be the pendant vertices
in $e_3$ and a pendant edge at $v_6$, respectively.
Let $\rho_4=\rho_{\mathcal{ABC}}(T_{m,4})$.
Let ${\bf x}$ be the $3$-unit positive eigenvector of $\mathcal{ABC}(T_{m,4})$ corresponding to $\rho_4$.
Let $x_i=x_{v_i}$ for $i=1,\ldots,8$.
By Lemma~\ref{ABC-permutation} and the $(\rho_4, {\bf x})$-eigenequations of $\mathcal{ABC}(T_{m,4})$, we have
\begin{align}
\rho_4 x_1^{2}&=\sqrt[3]{\frac{m-4}{m-3}}x_1x_2,\label{Eq0810-4-1}\\
\rho_4 x_2^{2}&=(m-4)\sqrt[3]{\frac{m-4}{m-3}}x_1^2+\sqrt[3]{\frac{m-2}{4(m-3)}}x_3x_6,\label{Eq0810-4-2}\\
\rho_4 x_3^{2}&=\sqrt[3]{\frac{m-2}{4(m-3)}}x_2x_6+\sqrt[3]{\frac{1}{2}}x_4x_7,\label{Eq0810-4-3}\\
\rho_4 x_4^{2}&=\sqrt[3]{\frac{1}{2}}x_3x_7+\sqrt[3]{\frac{1}{2}}x_5^2,\label{Eq0810-4-4}\\
\rho_4 x_5^{2}&=\sqrt[3]{\frac{1}{2}}x_4x_5,\label{Eq0810-4-5}\\
\rho_4 x_6^{2}&=\sqrt[3]{\frac{m-2}{4(m-3)}}x_2x_3+\sqrt[3]{\frac{1}{2}}x_8^2,\label{Eq0810-4-6}\\
\rho_4 x_7^{2}&=\sqrt[3]{\frac{1}{2}}x_3x_4,\label{Eq0810-4-7}\\
\rho_4 x_8^{2}&=\sqrt[3]{\frac{1}{2}}x_6x_8.\label{Eq0810-4-8}
\end{align}
From \eqref{Eq0810-4-4}, \eqref{Eq0810-4-5} and \eqref{Eq0810-4-7},
we get $x_3=\frac{(\rho_4^3-\frac{1}{2})^\frac{2}{3} }{\sqrt[3]{\frac{1}{2}}\rho_4}x_4$.
From \eqref{Eq0810-4-1} and \eqref{Eq0810-4-2}, we have
\begin{equation}\label{Eq0803-3}
\left(\rho_4^3-\frac{(m-4)^2}{m-3}\right)x_2^2=\sqrt[3]{\frac{m-2}{4(m-3)}}\rho_4^2x_3x_6.
\end{equation}
From \eqref{Eq0810-4-6}  and \eqref{Eq0810-4-8}, we have
\begin{equation}\label{Eq0803-4}
\left(\rho_4^3-\frac{1}{2}\right)x_6^2=\sqrt[3]{\frac{m-2}{4(m-3)}}\rho_4^2x_2x_3.
\end{equation}
By \eqref{Eq0803-3} and \eqref{Eq0803-4}, we have
\[
x_2=
\frac{\sqrt[3]{\frac{m-2}{4(m-3)}}\rho_4^2}{\left(\rho_4^3-\frac{(m-4)^2}{m-3}\right)^\frac{2}{3}
\left(\rho_4^3-\frac{1}{2}\right)^\frac{1}{3}}x_3,
\]
so
\[
x_2=\frac{\sqrt[3]{\frac{m-2}{2(m-3)}}\left(\rho_4^3-\frac{1}{2}\right)^\frac{1}{3}\rho_4}
{\left(\rho_4^3-\frac{(m-4)^2}{m-3}\right)^\frac{2}{3}}x_4.
\]
By \eqref{Eq0810-4-3}  and \eqref{Eq0810-4-7} , we have
\begin{equation}\label{Eq0803-6}
\rho_4^3x_3^3-\rho_4^3x_7^3=\sqrt[3]{\frac{m-2}{4(m-3)}}\rho_4^2x_2x_3x_6,
\end{equation}
from which, by combining \eqref{Eq0810-4-4}, \eqref{Eq0810-4-5} and \eqref{Eq0810-4-7}, we have
\begin{equation}\label{Eq0803-7}
\rho_4^3x_7^3=\left(\rho_4^3-\frac{1}{2}\right)x_4^3.
\end{equation}
Now by \eqref{Eq0803-3}, \eqref{Eq0803-6} and \eqref{Eq0803-7}, we have
\begin{equation}\label{Eq0803-2}
\left(\rho_4^3-\frac{(m-4)^2}{m-3}\right)x_2^3=\rho_4^3x_3^3-\left(\rho_4^3-\frac{1}{2}\right)x_4^3.
\end{equation}
Eliminating $x_2$ and $x_3$ from \eqref{Eq0803-2},
it follows that
\[
h_4(\rho_4^3)=0,
\]
where
\[h_4(t)=\frac{1}{8(m - 3)}(2t- 1)(4(m -3)t^2 -(4m^2-27m +50)t+4m^2 -32m +64).\]
So $\rho_4$ is the largest root of $h_4(t^3)=0$.

Note that
\[
\eta(t)=h_4(t)-\frac{p_4(t)}{8(m - 2)(m - 3)},
\]
where
\begin{align*}
p_4(t)&=(4m^2- 20m+ 14)t^2 +(4m^3 - 45m^2+ 154m - 152)t\\
&\quad  - 2m^3 + 22m^2 - 74m + 74.
\end{align*}
Let $t_4$ be the largest root of $h_4(t)=0$.

If $m=6$, then $t_4=2$ since $h_4(t)=\frac{(2t-1)(3t-2)(t-2)}{6}$,
and so $\eta(t_4)=-\frac{29}{16}$. Thus $t_4$ is less than the largest root of $h_4(t)=0$.

Suppose that $m\geq 7$.
As
\begin{align*}
h_4^{(1)} (t)&=3t^2+\frac{ -16m^2t+100mt-176t +12m^2 - 91m + 178}{8(m - 3)},\\
h_4^{(2)} (t)&=6t+\frac{-4m^2+25m -44}{2(m - 3)},\\
h_4^{(3)} (t)&=6,
\end{align*}
we have
\begin{align*}
h_4^{(1)} (m-4)&=\frac{8m^3 - 88m^2 + 293m - 270}{8(m - 3)}>0,\\
h_4^{(2)} (m-4)&=\frac{8m^2 - 59m + 100}{2(m - 3)}>0.
\end{align*}
Since $h_4^{(3-i)}(t)$ is strictly increasing for $t\geq m-5$ as $h_4^{(4-i)}(m-5)>0$ with $i=1,2,3$,
$h_4(t)$ is strictly increasing for $t\geq m-5$.
Note  that
\begin{align*}
h_4 (m-5)&=-\frac{(2m - 11)(m^2 - 3m - 14)}{8(m - 3)}<0,\\
h_4 (m-4)&=\frac{3(m - 6)(2m - 9)(m - 4)}{8(m - 3)}>0.
\end{align*}
So $t_4$ lies in $(m-5,m-4)$.
As  $p_4(t)$ is strictly increasing for $t\in [m-5,m-4]$,
we have
\[
p_4(t)> p_4(m-5)=(m - 4)(8m^3 - 95m^2 + 335m - 296)>0
\]
for $t\in (m-5,m-4)$.
Then
\[\eta(t_4)=h_4(t_4)-\frac{p_4(t_4)}{8(m - 2)(m - 3)}<0.\]
So $t_4$ is less than the largest root of $\eta(t)=0$.
Thus $\rho_{\mathcal{ABC}}(T_{m,4})<\rho_{\mathcal{ABC}}(S_{m,3;m-3,1,1})$.
\end{proof}

\begin{Lemma}\label{tree-3-unif++}  For $m\geq 5$,  $\rho_{\mathcal{ABC}}(S_{m,4;m-4,1,1,1})< \rho_{\mathcal{ABC}}(S_{m,4;m-3,1,1})$.
\end{Lemma}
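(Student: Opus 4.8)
Here is the plan. The goal is to pin down $\rho_{\mathcal{ABC}}(S_{m,4;m-4,1,1,1})$ as the unique root, in $(\tfrac12,\infty)$, of an explicit \emph{monotone} function of $t=\rho^4$, and then compare that root with $b_m:=\rho_{\mathcal{ABC}}^4(S_{m,4;m-3,1,1})$ supplied by Lemma~\ref{tree-3-unif+}. First I would fix notation for $G':=S_{m,4;m-4,1,1,1}$: write its base edge as $\{v_1,v_2,v_3,v_4\}$ with $d_{v_1}=m-3$ and $d_{v_2}=d_{v_3}=d_{v_4}=2$; a direct computation of $\omega_{G'}$ gives the weight $\tfrac{m-1}{8(m-3)}$ for the base edge, $\tfrac{m-4}{m-3}$ for each pendant edge at $v_1$, and $\tfrac12$ for each pendant edge at $v_2,v_3,v_4$. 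Since $\{v_2,v_3,v_4\}$ is an orbit of $\mathrm{Aut}(G')$ fixing $v_1$ and the degree-one vertices split into two orbits, Lemma~\ref{ABC-permutation} (together with Lemma~\ref{PF}(ii)) lets me describe the $4$-unit positive eigenvector of $\mathcal{ABC}(G')$ for $\rho:=\rho_{\mathcal{ABC}}(G')$ with only four values $p$ (on $v_1$), $q$ (on $v_2,v_3,v_4$), $r$ (on the degree-one vertices of pendant edges at $v_1$), $s$ (on the degree-one vertices of pendant edges at $v_2,v_3,v_4$). The eigenequations at a degree-one vertex give $\rho r=\sqrt[4]{\tfrac{m-4}{m-3}}\,p$ and $\rho s=\sqrt[4]{\tfrac12}\,q$; substituting these into the equations at $v_1$ and $v_2$, then eliminating $p$ and $q$, yields (with $t:=\rho^4$, and $t>\tfrac12$ forced by the $v_2$-equation since $q^3(\rho-\tfrac1{2\rho^3})=w_0pq^2>0$) the scalar equation
\[
\varphi(t):=(m-3)t-(m-4)^2-(m-1)\Bigl(\tfrac{t}{2t-1}\Bigr)^{3}=0 .
\]

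The key observation is that $\varphi$ is strictly increasing on $(\tfrac12,\infty)$: the part $(m-3)t-(m-4)^2$ is increasing (as $m\ge5$), and $\tfrac{t}{2t-1}$ is positive and strictly decreasing there, so $-(m-1)\bigl(\tfrac{t}{2t-1}\bigr)^{3}$ is increasing as well. Since $\varphi(\tfrac12^+)=-\infty$ and $\varphi(t)\to+\infty$, the function $\varphi$ has a unique root $t^\ast$ in $(\tfrac12,\infty)$; hence $\rho_{\mathcal{ABC}}^4(G')=t^\ast$, and $\varphi(b)>0$ exactly when $b>t^\ast$ (for $b>\tfrac12$). Consequently it suffices to prove $\varphi(b_m)>0$.

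By Lemma~\ref{tree-3-unif+}, $b_m$ is the largest root of $\eta$, equivalently of $E(t):=\bigl((m-2)t-(m-3)^2\bigr)(2t-1)^2-(m-1)t^2=0$, since one checks $E(t)=4(m-2)\eta(t)$. Because $E(m-4)=-(2m-9)^2-(m-1)(m-4)^2<0$ for $m\ge5$ and $E$ has positive leading coefficient, $b_m>m-4\ (\ge1)$, so $2b_m-1>0$. Now use $E(b_m)=0$ to replace $(m-1)\bigl(\tfrac{b_m}{2b_m-1}\bigr)^{2}$ by $(m-2)b_m-(m-3)^2$; after clearing the positive factor $2b_m-1$, the inequality $\varphi(b_m)>0$ becomes
\[
\bigl((m-3)b_m-(m-4)^2\bigr)(2b_m-1)>b_m\bigl((m-2)b_m-(m-3)^2\bigr),
\]
and expanding both sides this is exactly $(m-4)\bigl(b_m^{2}-(m-5)b_m+(m-4)\bigr)>0$, i.e.\ $b_m^{2}-(m-5)b_m+(m-4)>0$ since $m\ge5$. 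The quadratic $t^2-(m-5)t+(m-4)$ has discriminant $m^2-14m+41$, which is negative for $5\le m\le9$ (so the quadratic is everywhere positive), while for $m\ge10$ its larger root $\tfrac{(m-5)+\sqrt{m^2-14m+41}}{2}$ is $\le m-4<b_m$ (the bound $\le m-4$ being equivalent to $(m-3)^2\ge m^2-14m+41$, i.e.\ $m\ge4$). In either case $b_m^{2}-(m-5)b_m+(m-4)>0$, hence $\varphi(b_m)>0$, hence $t^\ast<b_m$, that is, $\rho_{\mathcal{ABC}}(S_{m,4;m-4,1,1,1})<\rho_{\mathcal{ABC}}(S_{m,4;m-3,1,1})$.

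The step I expect to be the main obstacle is arranging the comparison so that it closes on a clean elementary inequality: the eigenequations of $G'$ naturally produce (after clearing $(2t-1)^3$) a \emph{quartic} in $t=\rho^4$, and a brute-force root-location analysis in the style of Lemma~\ref{tree-3-unif} would be unpleasant here; the trick is to notice that dividing out $(2t-1)^3$ leaves the \emph{monotone} function $\varphi$ on $(\tfrac12,\infty)$, which reduces ``$t^\ast<b_m$'' to the single test ``$\varphi(b_m)>0$''. Once that is in place, using $E(b_m)=0$ to eliminate the $(m-1)$-term and checking the resulting quadratic inequality (with the small-$m$ disjunction) is routine.
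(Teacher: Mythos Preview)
Your proof is correct and follows a genuinely different route from the paper's.

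The paper clears the denominator $(2t-1)^3$ to obtain a quartic $h(t)$ with $h(\rho^4)=0$, then writes $t\,\eta(t)=h(t)-\frac{(m-4)p(t)}{8(m-2)(m-3)}$ for an explicit cubic $p$. It must then locate the largest root $t_0$ of $h$ in an interval (done separately for $m=5$ and via a chain of derivative sign checks $h^{(i)}(m-4)>0$ for $m\ge 6$), verify $p>0$ on that interval, and conclude $t_0\eta(t_0)<0$. Your idea of \emph{not} clearing $(2t-1)^3$ and instead working with the strictly increasing function $\varphi$ on $(\tfrac12,\infty)$ collapses all of that into the single test $\varphi(b_m)>0$; then your use of the factored form $E(t)=\bigl((m-2)t-(m-3)^2\bigr)(2t-1)^2-(m-1)t^2=4(m-2)\eta(t)$ lets you substitute $(m-1)\bigl(\tfrac{b_m}{2b_m-1}\bigr)^2=(m-2)b_m-(m-3)^2$ and reduce $\varphi(b_m)>0$ to the elementary quadratic inequality $b_m^2-(m-5)b_m+(m-4)>0$, which you dispatch with the crude bound $b_m>m-4$ (from $E(m-4)<0$). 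This is shorter and avoids both the case split at $m=5$ and the derivative calculus entirely; the paper's method, on the other hand, is the same template used throughout Lemma~5.3, so it has the virtue of uniformity even if it is heavier here.
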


\begin{proof}
Let $v_1e_1v_2e_2v_3e_3v_4$ be a path in $S_{m,4;m-4,1,1,1}$ such that $v_2$ and $v_3$ are the vertices of degree
$m-3$ and $2$, respectively.
Let $\rho =\rho_{\mathcal{ABC}}(S_{m,4;m-4,1,1,1})$.
Let ${\bf x}$ be the $4$-unit positive eigenvector of $\mathcal{ABC}(S_{m,4;m-4,1,1,1})$ corresponding to $\rho $.
Let $x_i=x_{v_i}$ for $i=1,\ldots,4$.
By Lemma~\ref{ABC-permutation} and the $(\rho, {\bf x})$-eigenequations of $\mathcal{ABC}(S_{m,4;m-4,1,1,1})$, we have
\begin{align}
\rho  x_1^{3}&=\sqrt[4]{\frac{m-4}{m-3}}x_1^2x_2,\label{Eq0810-5-1}\\
\rho  x_2^{3}&=(m-4)\sqrt[4]{\frac{m-4}{m-3}}x_1^3+\sqrt[4]{\frac{m-1}{8(m-3)}}x_3^3,\label{Eq0810-5-2}\\
\rho  x_3^{3}&=\sqrt[4]{\frac{m-1}{8(m-3)}}x_2x_3^2+\sqrt[4]{\frac{1}{2}}x_4^3,\label{Eq0810-5-3}\\
\rho  x_4^{3}&=\sqrt[4]{\frac{1}{2}}x_3x_4^2.\label{Eq0810-5-4}
\end{align}
By \eqref{Eq0810-5-1}, we get $x_1=\frac{\sqrt[4]{\frac{m-4}{m-3}}}{\rho}x_2$.
By \eqref{Eq0810-5-3} and \eqref{Eq0810-5-4}, we have $x_3=\frac{\sqrt[4]{\frac{m-1}{8(m-3)}}\rho^3}{\rho^4-\frac{1}{2}}x_2$.
Eliminating  $x_1$ and $x_3$ from \eqref{Eq0810-5-2}, we
have \[ \frac{h(\rho^4)}{\rho^3\left(\rho^4-\frac{1}{2}\right)^3}=0,\]
where
\begin{align*}
h(t)&=t^4-\frac{8m^2- 51m+ 91}{8(m-3)}t^3+\frac{ 6m^2-45m+87}{4(m-3)}t^2\\
&\quad -\frac{6m^2-47m+93}{8(m-3)}t+\frac{m^2-8m +16}{8(m - 3)}.
\end{align*}
So $\rho$ is the largest root of $h(t^4)=0$.
By the expression for $\eta(t)$ given in \eqref{Eq0803-1}, we have
\[
t\eta(t)=h(t)-\frac{(m - 4)p(t)}{8(m - 2)(m - 3)},
\]
where
\[
p(t)=5(m-1)t^3+4(m^2-7m+9)t^2-(4m^2-25m+33)t+m^2-6m + 8.
\]
Let $t_0$ be the largest root of $h(t)=0$.

If $m=5$, then $h(t)=\frac{(4t-1)(4t^3-8t^2+4t-1)}{16}$.
Note that $h(t)$ is strictly increasing for $t\geq 2$, because
$h^{(4-i)}(t)$ is strictly increasing for $t\geq 2$
as $h^{(5-i)}(2)>0$ with $i=1,2,3,4$.
As $h(1)<0$ and $h(2)>0$,  $t_0$ lies in $(1,2)$.
Since $p(t)$ is increasing for $t\in [1,2]$ and $p(1)>0$,
$t_0\eta(t_0)=h(t_0)-\frac{p(t_0)}{48}<-\frac{p(1)}{48}<0$.
So $t_0$ is less than the largest root of $\eta(t)=0$.

Suppose that $m\geq 6$.
As
\begin{align*}
h^{(1)} (t)&=4t^3-\frac{24m^2 - 153m + 273}{8(m - 3)}t^2 +\frac{6m^2-45m+87}{2(m - 3)}t \\
&\quad -\frac{6m^2 - 47m+ 93}{8(m - 3)},\\
h^{(2)} (t)&=12t^2-\frac{24m^2 - 153m + 273}{4(m - 3)}t+\frac{6m^2-45m+87}{2(m - 3)},\\
h^{(3)} (t)&=24t-\frac{24m^2 - 153m + 273}{4(m - 3)},\\
h^{(4)} (t)&=24,
\end{align*}
we have
\begin{align*}
h^{(1)} (m-4)&=\frac{8m^4 - 111m^3 + 525m^2 - 909m + 291}{8(m - 3)}>0,\\
h^{(2)} (m-4)&=\frac{3(8m^3 - 89m^2 + 315m - 346)}{4(m - 3)}>0,\\
h^{(3)} (m-4)&=\frac{3(24m^2 - 173m + 293)}{4(m - 3)}>0.
\end{align*}
Since $h^{(4-i)}(t)$ is strictly increasing for $t\geq m-4$ as $h^{(5-i)}(m-4)>0$ with $i=1,2,3,4$,
$h(t)$ is strictly increasing for $t\geq m-4$.
Note that
\begin{align*}
h (m-5)&=-\frac{m^4 - 8m^3 - 42m^2 + 526m - 1206}{8(m - 3)}<0,\\
h (m-4)&=\frac{(m - 4)(7m^3 - 99m^2 + 462m - 713)}{8(m - 3)}>0,\\
\end{align*}
the largest root of $h(t)=0$ is in $(m-5,m-4)$.

Noting $p(t)$ is increasing for $t\in [m-5,m-4]$,
we have $p(t)\geq p(m-5)=9m^4 - 152m^3 + 912m^2 - 2224m + 1698>0$ for $t\in (m-5,m-4)$.
Then recalling the largest root, say $t_0$ of $h(t)=0$ is in $(m-5,m-4)$,
we get
\[t_0\eta(t_0)=h(t_0)-\frac{(m - 4)p(t_0)}{8(m - 2)(m - 3)}<0.\]
So $t_0$ is less than the largest root of $\eta(t)=0$. By Lemma~\ref{tree-3-unif+},
$\rho_{\mathcal{ABC}}(S_{m,4;m-3,1,1})$ is the largest root of $\eta(t^4)=0$.
Thus $\rho_{\mathcal{ABC}}(S_{m,4;m-4,1,1,1})<\rho_{\mathcal{ABC}}(S_{m,4;m-3,1,1})$.
\end{proof}

\begin{proof}[{\bf Proof of Theorem \ref{abc7-180-01}}]
By Lemma~\ref{tree-3-unif+}, we only need to show that
\[
\rho_{\mathcal{ABC}}(G)\leq b_m^\frac{1}{k}
\]
with equality if and only if $G\cong S_{m,k;m-3,1,1}$.
It is trivial for $m=4$.

Suppose that $m\ge 5$.
Let $G$ be a non-power $k$-uniform hypertree of size $m$ that maximizes the ABC spectral radius. Then we only need to show that $G\cong S_{m,k;m-3,1,1}$.

If $m=5$, then $G\cong S_{5,k;2,1,1}$, $T_{5,3}^{k}$ when $k=3$,
and $G\cong S_{5,k;2,1,1}$, $S_{5,k;1,1,1,1}$, $T_{5,3}^{k}$ when $k\geq 4$.
If $k\geq 3$, then by Theorem~\ref{power} and Lemma~\ref{tree-3-unif} (ii),
\[\rho_{\mathcal{ABC}}(T_{5,3}^{k})=\rho_{\mathcal{ABC}}^{\frac{3}{k}}(T_{5,3})<\rho_{\mathcal{ABC}}^{\frac{3}{k}}(S_{5,3;2,1,1})
=\rho_{\mathcal{ABC}}(S_{5,k;2,1,1})\]
and if $k\ge 4$, then by Theorem~\ref{power} and Lemma~\ref{tree-3-unif++},
\[\rho_{\mathcal{ABC}}(S_{5,k;1,1,1,1})=\rho_{\mathcal{ABC}}^\frac{4}{k}(S_{5,4;1,1,1,1})
<\rho_{\mathcal{ABC}}^\frac{4}{k}(S_{5,4;2,1,1})=\rho_{\mathcal{ABC}}(S_{5,k;2,1,1}).\]
In either case, we have $G\cong S_{m,k;m-3,1,1}$.

Suppose that $m\ge 6$. Suppose that $G\not\cong S_{m,k;m-3,1,1}$.  It suffices to show that $\rho_{\mathcal{ABC}}(G)<\rho_{\mathcal{ABC}}(S_{m,k;m-3,1,1})$.

As $G$ is a non-power $k$-uniform hypertree, the diameter of $G$ is at least $3$. As $G$ is a non-power $k$-uniform hypertree and is different from $S_{m,k;m-3,1,1}$, the maximum degree $G$ is at most $m-3$.

\noindent
{\bf  Case 1.}  The diameter of $G$ is either $3$ or $4$, and the maximum degree of $G$ is $m-3$.

Suppose first  that the diameter of $G$ is $3$.
Then $G\cong S_{m,k;m-4,2,1}$, or $S_{m,k;m-4,1,1,1}$ when $k\geq 4$.
By Theorem~\ref{power} and Lemma~\ref{tree-3-unif}(i),
\[
\rho_{\mathcal{ABC}}(S_{m,k;m-4,2,1})=\rho_{\mathcal{ABC}}^\frac{3}{k}(T_{m,1})
<\rho_{\mathcal{ABC}}^\frac{3}{k}(S_{m,3;m-3,1,1})=\rho_{\mathcal{ABC}}(S_{m,k;m-3,1,1}).
\]
If $k\geq 4$, then by Theorem~\ref{power} and Lemma~\ref{tree-3-unif++},
\[
\rho_{\mathcal{ABC}}(S_{m,k;m-4,1,1,1})=\rho_{\mathcal{ABC}}^\frac{4}{k}(S_{m,4;m-4,1,1,1})
<\rho_{\mathcal{ABC}}^\frac{4}{k}(S_{m,4;m-3,1,1})=\rho_{\mathcal{ABC}}(S_{m,k;m-3,1,1}).
\]
Thus $\rho_{\mathcal{ABC}}(G)<\rho_{\mathcal{ABC}}(S_{m,k;m-3,1,1})$.

Suppose next  that the diameter of $G$ is $4$.
Then $G\cong T_{m,2}^k$, $T_{m,3}^k$ or $T_{m,4}^k$.
By Theorem~\ref{power} and Lemma~\ref{tree-3-unif},
we have
\[
\max\{\rho_{\mathcal{ABC}}^\frac{3}{k}(T_{m,2}),\rho_{\mathcal{ABC}}^\frac{3}{k}(T_{m,3}),
\rho_{\mathcal{ABC}}^\frac{3}{k}(T_{m,4})\}<\rho_{\mathcal{ABC}}^\frac{3}{k}(S_{m,3;m-3,1,1})=\rho_{\mathcal{ABC}}(S_{m,k;m-3,1,1}).
\]
So  $\rho_{\mathcal{ABC}}(G)<\rho_{\mathcal{ABC}}(S_{m,k;m-3,1,1})$.

\noindent
{\bf Case 2.} The diameter of $G$ is at least $5$, or the maximum degree of $G$ at most $m-4$.

Note that if the diameter of $G$ is at least $5$, then the maximum degree of $G$ at most $m-4$. So
the maximum degree of $G$ at most $m-4$.
By Theorem \ref{med} and Lemma \ref{tree}(ii),
we have
\[\rho_{\mathcal{ABC}}(G)\leq \sqrt[k]{\frac{m-5}{m-4}}\rho_{\mathcal{A}}(G)\leq \sqrt[k]{\frac{m-5}{m-4}}\rho_{\mathcal{A}}((S_{m,k;m-4,2,1})).\]
So, we only need  to show that $\sqrt[k]{\frac{m-5}{m-4}}\rho_{\mathcal{A}}(S_{m,k;m-4,2,1})
<\rho_{\mathcal{ABC}}(S_{m,k;m-3,1,1})$.

Now, we calculate $\widetilde{\rho}=\rho_{\mathcal{A}}((S_{m,k;m-4,2,1}))$.
Let $v_1e_1v_2e_2v_3e_3v_4$ be a path of $S_{m,k;m-4,2,1}$ such that $v_2$ is of degree $m-3$ and
$v_3$ is of degree $2$.
Let $v_5$ be the vertex of degree $3$ in $e_2\setminus\{v_2,v_3\}$,
and $v_6$ and $v_7$ be pendant vertices in a pendant edge at $v_5$ and $e_2$, respectively.
Let ${\bf x}$ be the $k$-unit positive eigenvector of $\mathcal{A}(S_{m,k;m-4,2,1})$ corresponding to $\widetilde{\rho}$.
Let $x_i=x_{v_i}$ for $i=1,\ldots,7$.
By Lemma~\ref{ABC-permutation} and the  $(\widetilde{\rho}, {\bf x})$-eigenequations of $\mathcal{A}(S_{m,k;m-4,2,1})$, we have
\begin{align*}
\widetilde{\rho} x_1^{k-1}&=x_1^{k-2}x_2,\\
\widetilde{\rho} x_2^{k-1}&=(m-4)x_1^{k-1}+x_3x_5x_7^{k-3},\\
\widetilde{\rho} x_3^{k-1}&=x_2x_5x_7^{k-3}+x_4^{k-1},\\
\widetilde{\rho} x_4^{k-1}&=x_3x_4^{k-2},\\
\widetilde{\rho} x_5^{k-1}&=x_2x_3x_7^{k-3}+2x_6^{k-1},\\
\widetilde{\rho} x_6^{k-1}&=x_5x_6^{k-2}\\
\widetilde{\rho} x_7^{k-1}&=x_2x_3x_5x_7^{k-4}.
\end{align*}
By similar argument in the proof of Lemma~\ref{tree-3-unif} (i),
it is obtainable that $\widetilde{\rho}^{3k}-m\widetilde{\rho}^{2k}+(3m-10)\widetilde{\rho}^k-2m+8=0$, and so $\widetilde{\rho}$ is the largest root of $h(t^k)=0$,
where
\begin{equation}\label{za}
 h(t)=t^3-mt^2+(3m-10)t-2m+8.
\end{equation}

Note that
\begin{align*}
h(m-2)&=m^2-10m+20>0,\\
h(m-3)&=-(3m-10)<0,\\
h\left(\frac{m-\sqrt{m^2-9m+60}}{3}\right)&=\frac{2(m^2-9m+30)\sqrt{m^2-9m+60}}{9}+\frac{3m^2-29m+72}{3}\\
&> 0.
\end{align*}
So the largest root of $h(t)$ lies in $(m-3,m-2)$.
It follows that $\widetilde{\rho}\in (\sqrt[k]{m-3},\sqrt[k]{m-2})$.

Note that by \eqref{Eq0803-1},
\begin{equation}\label{Eq0911-1}
\eta\left(\left(\sqrt[k]{\frac{m-5}{m-4}}t\right)^k\right)=\frac{q\left(t^k\right)}{4(m-2)(m-4)^3},
\end{equation}
where
\begin{align*}
q(t)&=4(m-2)(m-5)^3t^3-(4m^2-19m+27)(m-5)^2(m-4)t^2\\
&\quad +(m-4)^2(4m^2-23m+34)(m-5)t-(m-3)^2(m-4)^3.
\end{align*}
By easy calculation and using \eqref{za}, we have
\[
q(t)=4(m-2)(m-5)^3 h(t)+r(t),
\]
where
\begin{align*}
r(t)&=t^2(7m^2 - 63m + 108)(m - 5)^2\\
&\quad -t(m - 5)(8m^4 - 129m^3 + 738m^2 - 1760m + 1456)\\
&\quad +(m - 4)(7m^4 - 122m^3 + 767m^2 - 2032m + 1856).
\end{align*}
Consider the axis of symmetry of the quadratic function $r(t)$ on $t$.
As
\[ \frac{8m^4 - 129m^3 + 738m^2 - 1760m + 1456}{2(7m^2 - 63m + 108)(m - 5)}<m-3,\]
$r(t)$ is strictly  increasing for $t\in (m-3,+\infty)$.
Let $s(m)=-r(m-2)$, i.e.,
\[
s(m)=m^6 - 31m^5 + 398m^4 - 2632m^3 + 9284m^2 - 16356m + 11184.
\]
Note that
\begin{align*}
s^{(1)}(m)&=6m^5 - 155m^4 + 1592m^3 - 7896m^2 + 18568m - 16356,\\
s^{(2)}(m)&=30m^4 - 620m^3 + 4776m^2 - 15792m + 18568,\\
s^{(3)}(m)&=120m^3 - 1860m^2 + 9552m - 15792,\\
s^{(4)}(m)&=360m^2 - 3720m + 9552,\\
s^{(5)}(m)&=720m - 3720,\\
s^{(6)}(m)&=720.
\end{align*}
It follows that $s^{(5-i)}(m)$ is strictly increasing for $m\geq 6$ as $s^{(6-i)}(6)>0$ with $i=1,\ldots, 6$.
Then  $s(m)\geq s(6)>0$ for $m\geq 6$, so
$r(m-2)=-s(m)<0$.
As $r(t)$ is strictly increasing for $t\in (m-3,m-2)$,
we have $r(t)<0$ for $t\in (m-3 ,m-2)$.

Let
$\widetilde{t}=\sqrt[k]{\frac{m-5}{m-4}}\widetilde{\rho}$.
Recall that $\widetilde{\rho}^k\in (m-3,m-2)$ and $h(\widetilde{\rho}^k)=0$.
Then by \eqref{Eq0911-1},
\begin{align*}
\eta\left(\widetilde{t}^k\right)
&=\frac{q\left( \widetilde{\rho}^k\right)}{4(m-2)(m-4)^3}\\
&=\frac{4(m-2)(m-5)^3 h(\widetilde{\rho}^k)+r(\widetilde{\rho}^k)}{4(m-2)(m-4)^3}\\
&=\frac{r(\widetilde{\rho}^k)}{4(m-2)(m-4)^3}\\
&<0,
\end{align*}
So $\widetilde{t}$ is less than the largest root of $\eta(t^k)=0$,
i.e., $ \sqrt[k]{\frac{m-5}{m-4}}\rho_{\mathcal{A}}((S_{m,k;m-4,2,1}))
<\rho_{\mathcal{ABC}}(S_{m,k;m-3,1,1})$.
Thus $\rho_{\mathcal{ABC}}(G)<\rho_{\mathcal{ABC}}(S_{m,k;m-3,1,1})$.
\end{proof}

\section{ABC spectral radius of unicyclic hypergraphs}

For integers $m\geq 2$, $k\geq 3$, $g=2,3$ and  $a_i$ for $1\leq i\leq k$ with  $0\leq a_i\leq m-g$ and $\sum_{i=1}^ka_i=m-g$,
let $U_{m,k,g}(a_1,\ldots, a_k)$ be the unicyclic graph obtained from a cycle
$u_1e_1u_2\dots u_ge_gu_1$
by adding $a_i$ pendant edges at $v_i$, where $e_1=\{v_1,\ldots, v_k\}$, $v_1=u_1$ and $v_k=u_2$.
Then $U_{m,g}^{(k)}\cong U_{m,k,g}(m-g, 0,\ldots, 0)$. It is evident that $U_{m,3}^{(k)}\cong U_{m,3}^k$.

\begin{Lemma}\label{unicyclic-2}
Let $k\geq3$, $g=2,3$, $m\geq 3$ and $a_i$ for $1\leq i\leq k$ be integers such that $a_1\geq a_k\geq 0$, $a_2\geq \dots\geq a_{k-1}\geq 0$ and $\sum_{i=1}^ka_i=m-g$.
Then
\[
\rho_{\mathcal{ABC}}(U_{m,k,g}(a_1,\ldots, a_k))\leq \rho_{\mathcal{ABC}}(U_{m,g}^{(k)})
\]
with equality if and only if $a_1=m-g$ and $a_2=\dots=a_k=0$.
\end{Lemma}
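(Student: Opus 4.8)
\emph{Strategy and setup.} Lemma~\ref{unicyclic-2} is a ``concentration'' statement: among all ways of attaching the $m-g$ pendant edges to the vertices $v_1,\dots,v_k$ of the distinguished edge $e_1$ of the cycle, $\rho_{\mathcal{ABC}}$ is largest exactly when they all hang on one of the two cycle-vertices $v_1,v_k$. The main idea is to transplant the Perron eigenvector of $G=U_{m,k,g}(a_1,\dots,a_k)$ into a test vector for $U_{m,g}^{(k)}$ and to apply Lemma~\ref{R-qe}; the cases in which an interior vertex of $e_1$ is the most heavily loaded need a separate, more technical treatment. Let $\rho=\rho_{\mathcal{ABC}}(G)$ and let ${\bf x}$ be its positive $k$-unit Perron eigenvector; by Lemma~\ref{ABC-permutation} it is constant on the orbits of $\mathrm{Aut}(G)$, so the $k-1$ ``new'' vertices of a pendant edge at $v_i$ share a value $y_i$, and pendant edges at a fixed vertex are interchangeable. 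The pendant-vertex eigenequation gives $y_i=\rho^{-1}(\tfrac{d_i-1}{d_i})^{1/k}x_{v_i}$ (a pendant edge at $v_i$ has $\mathcal{ABC}$-weight $(\tfrac{d_i-1}{d_i})^{1/k}$, since the sum of its vertex degrees minus $k$ is $d_i-1$); eliminating the pendant and cycle-interior vertices leaves a reduced system in $x_{v_1},\dots,x_{v_k}$ in which the $a_i$ pendant edges at $v_i$ contribute $a_i\rho^{-(k-1)}\tfrac{d_i-1}{d_i}\,x_{v_i}^{k-1}$. Three elementary facts drive everything: (i) $\sum_{i\in e_1}d_i-k=m-g+2$ is independent of $(a_1,\dots,a_k)$, so $\omega_G(e_1)=\tfrac{m-g+2}{\prod_{i\in e_1}d_i}$; and since $\log\prod_{i\in e_1}d_i$ is a strictly concave function of $(a_1,\dots,a_k)$ on the simplex $\{a_i\ge 0,\ \sum a_i=m-g\}$, the product $\prod_{i\in e_1}d_i=(a_1+2)(a_k+2)\prod_{i=2}^{k-1}(a_i+1)$ attains its minimum $2(m-g+2)$ exactly when all $m-g$ pendant edges sit on $v_1$ or on $v_k$, i.e. when $G\cong U_{m,g}^{(k)}$; (ii) $\tfrac{d_i-1}{d_i}\le\tfrac{m-g+1}{m-g+2}$ for each $i\in e_1$, since $d_i\le m-g+2$; (iii) for $g=2$, $\omega_G(e_2)=\tfrac{d_{v_1}+d_{v_k}-2}{d_{v_1}d_{v_k}}\le\tfrac12$ as $(d_{v_1}-2)(d_{v_k}-2)\ge0$, with equality iff $a_1a_k=0$, while for $g=3$ each of the two cycle edges has $\omega_G$ equal to $\tfrac12$ regardless of $d_{v_1},d_{v_k}$.

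\emph{Main case: $a_1\ge a_i$ for every $i$.} I would first prove $x_{v_1}=\max_{1\le j\le k}x_{v_j}$. Multiplying the reduced equation at an interior vertex $v_j$ by $x_{v_j}$ gives $\psi_jx_{v_j}^k=\omega_G(e_1)^{1/k}\prod_{1\le l\le k}x_{v_l}$ with $\psi_j=\rho-a_j\rho^{-(k-1)}\tfrac{d_j-1}{d_j}>0$, whereas the equation at $v_1$ has the same right-hand side plus a nonnegative cycle-edge term, so $\psi_1x_{v_1}^k\ge\psi_jx_{v_j}^k$; since $a_1\ge a_j$ forces $a_1\tfrac{d_1-1}{d_1}=\tfrac{a_1(a_1+1)}{a_1+2}\ge\tfrac{a_j^2}{a_j+1}=a_j\tfrac{d_j-1}{d_j}$ (here $\tfrac{a(a+1)}{a+2}\ge\tfrac{a^2}{a+1}$ reduces to $1\ge0$), we get $\psi_j\ge\psi_1$ and hence $x_{v_1}\ge x_{v_j}$; comparing the equations at $v_1$ and $v_k$ and using $a_1\ge a_k$ gives $x_{v_1}\ge x_{v_k}$ in the same way. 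Now on $V(U_{m,g}^{(k)})=V(G)$ define ${\bf z}$ to agree with ${\bf x}$ on $v_1,\dots,v_k$ and on the cycle-interior vertices, and on the new vertices of the pendant edges of $U_{m,g}^{(k)}$ to carry exactly the multiset of values $\{y_i\}$ that ${\bf x}$ assigns to the pendant new vertices of $G$ (the $a_i$ pendant edges that were at $v_i$ becoming $a_i$ pendant edges at $v_1$); then ${\bf z}$ is $k$-unit. Using $\mathcal{ABC}(H){\bf w}^k=k\sum_{e\in E(H)}\omega_H(e)^{1/k}\prod_{v\in e}w_v$, each edge of $U_{m,g}^{(k)}$ dominates the matching edge of $G$ — the $e_1$-term by (i), the cycle-edge terms by (iii), the pendant-edge terms because $(\tfrac{m-g+1}{m-g+2})^{1/k}x_{v_1}\ge(\tfrac{d_i-1}{d_i})^{1/k}x_{v_i}$ by (ii) and $x_{v_1}=\max$ — with strict inequality unless $G\cong U_{m,g}^{(k)}$. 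Hence $\mathcal{ABC}(U_{m,g}^{(k)}){\bf z}^k\ge\mathcal{ABC}(G){\bf x}^k=\rho_{\mathcal{ABC}}(G)$, and Lemma~\ref{R-qe} gives $\rho_{\mathcal{ABC}}(G)\le\rho_{\mathcal{ABC}}(U_{m,g}^{(k)})$, with equality iff $G\cong U_{m,g}^{(k)}$.

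\emph{Residual case: $a_i>a_1$ for some interior vertex.} Here one first reduces, by a sequence of pendant-edge relocations that each strictly raise $\rho_{\mathcal{ABC}}$, to a hypergraph supported on $v_1$ and a single interior vertex: interior-to-interior moves towards the more loaded interior vertex are legitimate because $x_{v_i}$ is increasing in $a_i$ among interior vertices, and the $v_k\to v_1$ moves are legitimate because $a_1\ge a_k$ forces $x_{v_1}\ge x_{v_k}$; each such move is ``downhill'', so $\omega(e_1)$ and the recipient's pendant weights strictly increase, the moved edge reattaches with strictly larger weight, the cycle weights do not decrease (identically $\tfrac12$ for $g=3$; an elementary inequality using $d_{v_k}\ge2$ for $g=2$), and the loss on the donor's remaining pendant edges is dominated (a Rayleigh-quotient transplant exactly as above). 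What remains is to show $\rho_{\mathcal{ABC}}(U_{m,k,g}(b,m-g-b,0,\dots,0))<\rho_{\mathcal{ABC}}(U_{m,g}^{(k)})$ for $0\le b<m-g$: after the eliminations above each side is the largest root of an explicit polynomial in $\rho^k$ of bounded degree whose coefficients are polynomials in $b$ and $m$, and the inequality follows by evaluating the first polynomial at the largest root of the second and checking the sign. (When $b$ is so large that $b\ge m-g-b$, $U_{m,k,g}(b,m-g-b,0,\dots,0)$ already falls under the main case above.)

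\emph{Main obstacle.} The difficulty is that the $\mathcal{ABC}$-weights are functions of the vertex degrees, so relocating one pendant edge simultaneously perturbs the weight of that edge, of every other pendant edge at the donor and the recipient, of $e_1$, and of the cycle edge(s) at the recipient; a careless move towards $v_1$ can genuinely lower some of these (for instance $\omega_G(e_1)$ when the donor is a heavily loaded interior vertex, or $\omega_G(e_2)$ in the loose $2$-cycle). This is precisely why the main case and the residual case must be separated — the transplant to $U_{m,g}^{(k)}$ works only when $x_{v_1}$ is the largest Perron entry on $e_1$ — and why, in the residual case, the relocations have to be steered towards the correct vertex in the correct order and the final one-parameter comparison has to be carried out by hand; controlling the Perron entries $x_{v_1},\dots,x_{v_k}$ throughout is the delicate ingredient.
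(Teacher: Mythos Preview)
Your ``main case'' is essentially sound, but the claim $x_{v_1}\ge x_{v_k}$ for $g=3$ does \emph{not} follow ``in the same way'': after multiplying by $x_{v_1}$ and $x_{v_k}$, the cycle-edge contributions at $v_1$ and $v_k$ involve the \emph{different} interior values $x_v$ and $x_u$ of $e_3$ and $e_2$, so the two equations are not directly comparable until you first eliminate $x_u,x_v$ via their own eigenequations (one finds $x_v^2/x_u^2=x_{v_1}/x_{v_k}$ and then a short monotonicity argument in the resulting quadratic finishes it). The ``residual case'' is where the genuine gaps are. Each single-edge relocation simultaneously changes the weight of $e_1$, of the donor's \emph{remaining} pendant edges (which drop), of the recipient's pendant edges, and possibly of a cycle edge; you assert that the gains dominate the losses but never verify it, and this is exactly the difficulty you yourself flag under ``Main obstacle''. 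The final one-parameter polynomial comparison for $U_{m,k,g}(b,m-g-b,0,\dots,0)$ versus $U_{m,g}^{(k)}$ is likewise only asserted.

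The paper avoids all of this with a single device you are missing: rather than \emph{proving} that $x_{v_1}$ is the largest entry on $e_1$, it simply lets $v_i$ be the vertex with $x_{v_i}=\max_{1\le j\le k}x_{v_j}$ and builds the test vector $\mathbf{y}$ for $H=U_{m,g}^{(k)}$ by \emph{swapping} $x_{v_1}\leftrightarrow x_{v_i}$ (and, for $g=3$ with $i=k$, additionally swapping the interior values of $e_2$ and $e_3$). The swap preserves the $k$-norm, and then every summand of $\mathcal{ABC}(H)\mathbf{y}^k-\mathcal{ABC}(G)\mathbf{x}^k$ is manifestly nonnegative by exactly the three elementary facts (i)--(iii) you already isolated, with at least one strict inequality whenever $G\not\cong U_{m,g}^{(k)}$. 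This handles all $(a_1,\dots,a_k)$ at once --- no separate ``residual case'', no chain of relocations, and no polynomial computation.
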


\begin{proof}
Denote by  $\mathcal{U}_{m,k,g}$ the class of hypergraphs $U_{m,k,g}(a_1,\ldots, a_k)$ with $a_1\geq a_k\geq 0$, $a_2\geq \dots \geq a_{k-1}\geq 0$ and $\sum_{i=1}^ka_i=m-g$.
Let $G=U_{m,k,g}(a_1,\ldots, a_k)$ be a hypergraph in $\mathcal{U}_{m,k,g}$ with maximum ABC spectral radius.
Let ${\bf x}$ be the $k$-unit positive eigenvector of $\mathcal{ABC}(G)$ corresponding to $\rho_{\mathcal{ABC}}(G)$. Then
by Lemma~\ref{R-qe},
$\rho_{\mathcal{ABC}}(G)=\mathcal{ABC}(G){\bf x}^k$.
Let $u_1e_1u_2\dots u_ge_g u_1$ be the cycle of $G$ as defined, where $e_1=\{v_1,\ldots, v_k\}$, $v_1=u_1$ and $v_k=u_2$.
Let $v'_i$ with $1\leq i\leq k$ be a pendant vertex in a pendant edge at $v_i$,
 $u$ be a pendant vertex in $e_2$, and $v$ be a pendant vertex in $e_3$ when $g=3$.
By Lemma~\ref{ABC-permutation}, the entry of ${\mathbf x}$ corresponding to each pendant vertex in
an edge is the same.

It is evident that $a_1\le m-g$.
Suppose that $a_1\leq m-g-1$. Then $a_s\ge 1$ for some $s=2, \dots, k$.
Let $H$ be the unicyclic hypergraph obtained from $G$ by moving all pendant edges from $v_j$ to $v_1$, where $2\leq j\leq k$.

Assume that  $x_{v_i}=\max_{1\leq j\leq k}x_{v_j}$, where $1\leq i\leq k$.

Suppose that $g=2$.
Let $\mathbf{y}$ be a vector such that $y_{v_1}=x_{v_i}$, $y_{v_i}=x_{v_1}$ and $y_w=x_w$ for $w\in V(G)\setminus \{v_1,v_i\}$ if $i>1$ and $\mathbf{y}={\bf x}$ otherwise.
By Lemma \ref{R-qe},
$\rho_{\mathcal{ABC}}(H)\ge \mathcal{ABC}(G)\mathbf{y}^{k}$. So
\begin{align*}
&\quad \frac{1}{k}(\rho_{\mathcal{ABC}}(H)-\rho_{\mathcal{ABC}}(G))\\
&\geq \frac{1}{k}\mathcal{ABC}(H)\mathbf{y}^k-\frac{1}{k}\mathcal{ABC}(G){\bf x}^k\\
&=\sqrt[k]{\dfrac{\sum_{w\in e_1}d_{H}(w)-k}{\prod_{w\in e_1} d_{H}(w)}}y_{v_1}\dots y_{v_k}+\sqrt[k]{\dfrac{\sum_{w\in e_2}d_{H}(w)-k}{\prod_{w\in e_2} d_{H}(w)}}y_{v_1} y_{v_k}y_{u}^{k-2}\\
&\quad +\sum_{j=1}^k a_j\sqrt[k]{\dfrac{d_{H}(v_1)-1}{d_{H}(v_1)}}y_{v_1} y_{v'_j}^{k-1}\\
&\quad -\left(\sqrt[k]{\dfrac{\sum_{w\in e_1}d_G(w)-k}{\prod_{w\in e_1} d_G(w)}}x_{v_1}\dots x_{v_k}+\sqrt[k]{\dfrac{\sum_{w\in e_2}d_G(w)-k}{\prod_{w\in e_2} d_G(w)}}x_{v_1} x_{v_k}x_{u}^{k-2}\right.\\
&\quad \left.+\sum_{j=1}^k a_j\sqrt[k]{\dfrac{d_{G}(v_j)-1}{d_{G}(v_j)}}x_{v_j} x_{v'_j}^{k-1}\right)\\
&=\sqrt[k]{\frac{1}{2}}x_{v_1}\dots x_{v_k}+\sqrt[k]{\frac{1}{2}}x_{v_i}x_{v_k}x_{u}^{k-2}
+\sum_{j=1}^k a_j\sqrt[k]{\frac{m-1}{m}}x_{v_i} x_{v'_j}^{k-1}\\
&\quad -\sqrt[k]{\dfrac{m}{(a_1+2)(a_k+2)\prod_{j=2}^{k-1}(a_j+1)}}x_{v_1}\dots x_{v_k}\\
&\quad -\sqrt[k]{\frac{a_1+a_k+2}{(a_1+2)(a_k+2)}}x_{v_1} x_{v_k}x_{u}^{k-2}-a_1\sqrt[k]{\frac{a_1+1}{a_1+2}}x_{v_1} x_{v'_1}^{k-1}\\
&\quad -\sum_{j=2}^{k-1} a_j\sqrt[k]{\frac{a_j}{a_j+1}}x_{v_j} x_{v'_j}^{k-1}-a_k\sqrt[k]{\frac{a_k+1}{a_k+2}}x_{v_k} x_{v'_k}^{k-1}\\
&=\left(\sqrt[k]{\frac{1}{2}}-\sqrt[k]{\dfrac{m}{(a_1+2)(a_k+2)\prod_{j=2}^{k-1}(a_j+1)}}\right)x_{v_1}\dots x_{v_k}\\
&\quad +\left(\sqrt[k]{\frac{1}{2}}x_{v_i}-\sqrt[k]{\frac{a_1+a_k+2}{(a_1+2)(a_k+2)}}x_{v_1}\right)x_{v_k}x_{u}^{k-2}\\
&\quad +a_1\left(\sqrt[k]{\frac{m-1}{m}}x_{v_i}-\sqrt[k]{\frac{a_1+1}{a_1+2}}x_{v_1}\right) x_{v'_1}^{k-1}\\
&\quad +\sum_{j=2}^{k-1}a_j\left(\sqrt[k]{\frac{m-1}{m}}x_{v_i}-\sqrt[k]{\frac{a_j}{a_j+1}}x_{v_j}\right) x_{v'_j}^{k-1}\\
&\quad +a_k\left(\sqrt[k]{\frac{m-1}{m}}x_{v_i}-\sqrt[k]{\frac{a_k+1}{a_k+2}}x_{v_k} \right) x_{v'_k}^{k-1}.
\end{align*}
By direct checking, we have
\[
\sqrt[k]{\frac{1}{2}}\geq \sqrt[k]{\frac{a_1+a_k+2}{(a_1+2)(a_k+2)}}.
\]
As $m=\sum_{j=1}^ka_j+2$, it is easy to see $(a_1+2)(a_k+2)\prod_{i=2}^{k-1}(a_i+1)\ge 2m$, so we have
\[
\sqrt[k]{\frac{1}{2}}\geq \sqrt[k]{\frac{m}{(a_1+2)(a_k+2)\prod_{i=2}^{k-1}(a_i+1)}}.
\]
As a function of $t$, $\frac{t}{t+1}$ is strictly increasing for $t>0$, so
\[
\sqrt[k]{\frac{m-1}{m}}> \max\left\{\sqrt[k]{\frac{a_1+1}{a_1+2}}, \sqrt[k]{\frac{a_2}{a_2+1}}, \ldots, \sqrt[k]{\frac{a_{k-1}}{a_{k-1}+1}}, \sqrt[k]{\frac{a_k+1}{a_k+2}}\right\}.
\]
By these inequalities and the above estimate for $\frac{1}{k}(\rho_{\mathcal{ABC}}(H)-\rho_{\mathcal{ABC}}(G))$, we have
\begin{align*}
\frac{1}{k}(\rho_{\mathcal{ABC}}(H)-\rho_{\mathcal{ABC}}(G)) & \ge \begin{cases}
 a_s\left(\sqrt[k]{\dfrac{m-1}{m}}x_{v_i}-\sqrt[k]{\dfrac{a_s}{a_s+1}}x_{v_s}\right) x_{v'_s}^{k-1} & \mbox{if } s<k\\ \\
a_s\left(\sqrt[k]{\dfrac{m-1}{m}}x_{v_i}-\sqrt[k]{\dfrac{a_k+1}{a_k+2}}x_{v_s} \right) x_{v'_s}^{k-1} &
\mbox{if } s=k
\end{cases}\\
&>0,
\end{align*}
so $\rho_{\mathcal{ABC}}(H)>\rho_{\mathcal{ABC}}(G)$, which is a contradiction. Thus  $a_1=m-2$ and $a_2=\dots=a_k=0$.

Suppose next that $g=3$.
First, suppose that $1\leq i\leq k-1$.
Let $\mathbf{y}$ be a vector such that $y_{v_1}=x_{v_i}$, $y_{v_i}=x_{v_1}$ and $y_w=x_w$ for $w\in V(G)\setminus \{v_1,v_i\}$ if $i>1$ and $\mathbf{y}={\bf x}$ otherwise.
Note that
$\sqrt[k]{\frac{1}{2}}\geq \sqrt[k]{\frac{a_1+a_k+2}{(a_1+2)(a_k+2)}}$,
$\sqrt[k]{\frac{1}{2}}\geq \sqrt[k]{\frac{m-1}{(a_1+2)(a_k+2)\prod_{i=2}^{k-1}(a_i+1)}}$
and $\sqrt[k]{\frac{m-2}{m-1}}> \max\left\{\sqrt[k]{\frac{a_1+1}{a_1+2}}, \sqrt[k]{\frac{a_2}{a_2+1}}, \ldots, \sqrt[k]{\frac{a_{k-1}}{a_{k-1}+1}},\right.$ $\left.\sqrt[k]{\frac{a_k+1}{a_k+2}}\right\}$.
By Lemma \ref{R-qe}, we have
\begin{align*}
&\quad \frac{1}{k}(\rho_{\mathcal{ABC}}(H)-\rho_{\mathcal{ABC}}(G))\\
&\geq \frac{1}{k}\mathcal{ABC}(H)\mathbf{y}^k-\frac{1}{k}\mathcal{ABC}(G){\bf x}^k\\
&=\sqrt[k]{\frac{1}{2}}y_{v_1}\dots y_{v_k}+\sqrt[k]{\frac{1}{2}}y_{u_3}y_{v_k}y_{u}^{k-2}
+\sqrt[k]{\frac{1}{2}}y_{u_3}y_{v_1}y_{v}^{k-2}\\
&\quad +\sum_{j=1}^k a_j\sqrt[k]{\frac{m-2}{m-1}}y_{v_1} y_{v'_j}^{k-1}-\sqrt[k]{\frac{m}{(a_1+2)(a_k+2)\prod_{j=2}^{k-1}(a_j+1)}}x_{v_1}\dots x_{v_k}\\
&\quad -\sqrt[k]{\frac{1}{2}}x_{u_3} x_{v_k}x_{u}^{k-2}-\sqrt[k]{\frac{1}{2}}x_{u_3} x_{v_1}x_{v}^{k-2}-a_1\sqrt[k]{\frac{a_1}{a_1+1}}x_{v_1}x_{v'_1}^{k-1}\\
&\quad -\sum_{j=2}^{k-1}a_j\sqrt[k]{\frac{a_j}{a_j+1}}x_{v_j}x_{v'_j}^{k-1}-a_k\sqrt[k]{\frac{a_k}{a_k+1}}x_{v_k}x_{v'_k}^{k-1}\\
&=\left(\sqrt[k]{\frac{1}{2}}-\sqrt[k]{\frac{m}{(a_1+2)(a_k+2)\prod_{j=2}^{k-1}(a_j+1)}}\right)x_{v_1}\dots x_{v_k}\\
&\quad +\sqrt[k]{\frac{1}{2}}\left(x_{v_i}-x_{v_1}\right)x_{u_3}x_{v}^{k-2}+a_1\left(\sqrt[k]{\frac{m-2}{m-1}}x_{v_i}
-\sqrt[k]{\frac{a_1+1}{a_1+2}}x_{v_1}\right) x_{v'_1}^{k-1}\\
&\quad +\sum_{j=2}^{k-1}a_j\left(\sqrt[k]{\frac{m-2}{m-1}}x_{v_i}-\sqrt[k]{\frac{a_j}{a_j+1}}x_{v_j}\right) x_{v'_j}^{k-1}\\
&\quad +a_k\left(\sqrt[k]{\frac{m-2}{m-1}}x_{v_i}-\sqrt[k]{\frac{a_k+1}{a_k+2}}x_{v_k} \right) x_{v'_k}^{k-1}\\
&>0.
\end{align*}
So $\rho_{\mathcal{ABC}}(H)>\rho_{\mathcal{ABC}}(G)$, a contradiction.
Now, suppose that $i=k$.
Let $\mathbf{z}$ be a vector such that $z_{v_1}=x_{v_k}$, $z_{v_k}=x_{v_1}$, $z_w=x_v$ for $w\in e_2\setminus\{u_2,u_3\}$,  $z_w=x_u$ for $w\in e_3\setminus\{u_1,u_3\}$ and $z_w=x_w$ for $w\in V(G)\setminus (e_1\cup e_2\cup e_3\setminus\{u_3\})$.
By Lemma \ref{R-qe}, we have
\begin{align*}
&\quad \frac{1}{k}(\rho_{\mathcal{ABC}}(H)-\rho_{\mathcal{ABC}}(G))\\
&\geq \frac{1}{k}\mathcal{ABC}(H)\mathbf{z}^k-\frac{1}{k}\mathcal{ABC}(G){\bf x}^k\\
&=\sqrt[k]{\frac{1}{2}}z_{v_1}\dots z_{v_k}+\sqrt[k]{\frac{1}{2}}z_{u_3}z_{v_k}z_{u}^{k-2}
+\sqrt[k]{\frac{1}{2}}z_{u_3}z_{v_1}z_{v}^{k-2}\\
&\quad +\sum_{j=1}^k a_j\sqrt[k]{\frac{m-2}{m-1}}z_{v_1} z_{v'_j}^{k-1}-\sqrt[k]{\frac{m}{(a_1+2)(a_k+2)\prod_{j=2}^{k-1}(a_j+1)}}x_{v_1}\dots x_{v_k}\\
&\quad -\sqrt[k]{\frac{1}{2}}x_{u_3} x_{v_k}x_{u}^{k-2}-\sqrt[k]{\frac{1}{2}}x_{u_3} x_{v_1}x_{v}^{k-2}-a_1\sqrt[k]{\frac{a_1}{a_1+1}}x_{v_1}x_{v'_1}^{k-1}\\
&\quad -\sum_{j=2}^{k-1}a_j\sqrt[k]{\frac{a_j}{a_j+1}}x_{v_j}x_{v'_j}^{k-1}-a_k\sqrt[k]{\frac{a_k}{a_k+1}}x_{v_k}x_{v'_k}^{k-1}\\
&=\left(\sqrt[k]{\frac{1}{2}}-\sqrt[k]{\frac{m}{(a_1+2)(a_k+2)\prod_{j=2}^{k-1}(a_j+1)}}\right)x_{v_1}\dots x_{v_k}\\
&\quad +a_1\left(\sqrt[k]{\frac{m-2}{m-1}}x_{v_k}-\sqrt[k]{\frac{a_1+1}{a_1+2}}x_{v_1}\right) x_{v'_1}^{k-1}\\
&\quad +\sum_{j=2}^{k-1}a_j\left(\sqrt[k]{\frac{m-2}{m-1}}x_{v_k}-\sqrt[k]{\frac{a_j}{a_j+1}}x_{v_j}\right) x_{v'_j}^{k-1}\\
&\quad +a_k\left(\sqrt[k]{\frac{m-2}{m-1}}x_{v_k}-\sqrt[k]{\frac{a_k+1}{a_k+2}}x_{v_k} \right) x_{v'_k}^{k-1}\\
&>0.
\end{align*}
So $\rho_{\mathcal{ABC}}(H)>\rho_{\mathcal{ABC}}(G)$, also a contradiction.
Thus $a_1=m-3$ and $a_2=\dots=a_k=0$.
\end{proof}

\begin{proof}[{\bf Proof of Theorem~\ref{abc7-18-1}}]
First, we calculate $\rho_{\mathcal{ABC}}(U_{m,2}^{(k)})$.
Let $v_1$ be the vertex of degree $m$ in $U_{m,2}^{(k)}$.
Let $v_1e_1v_2e_2v_1$ be the cycle of $U_{m,2}^{(k)}$.  Let $v_0$ and $v_3$ be pendant  vertices in a pendant edge and in $e_1$ of $U_{m,2}^{(k)}$, respectively.
Let ${\bf x}$ be the $k$-unit positive eigenvector  of $\mathcal{ABC}(U_{m,2}^{(k)})$ corresponding to $\rho=\rho_{\mathcal{ABC}}(U_{m,2}^{(k)})$.
Let $x_i=x_{v_i}$ for $i=0,1,2,3$.
By Lemma~\ref{ABC-permutation} and the $(\rho, \mathbf{y})$-eigenequations of $\mathcal{ABC}(U_{m,2}^{(k)})$, we have
\begin{align}
\rho x_0^{k-1}&=\sqrt[k]{\frac{m-1}{m}}y_0^{k-2}x_1,\label{21a1}\\
\rho x_1^{k-1}&=(m-2)\sqrt[k]{\frac{m-1}{m}}x_0^{k-1}+2\cdot \sqrt[k]{\frac{1}{2}}x_2 y_3^{k-2},\label{21a2}\\
\rho x_2^{k-1}&=2\cdot \sqrt[k]{\frac{1}{2}}x_1 x_3^{k-2},\label{21a3}\\
\rho x_3^{k-1}&=\sqrt[k]{\frac{1}{2}}y_1x_2x_3^{k-3}. \label{21a4}
\end{align}
By~\eqref{21a1}, we have $x_0=\frac{\sqrt[k]{\frac{m-1}{m}}x_1}{\rho}$.
By~\eqref{21a3} and~\eqref{21a4}, we have $x_3=\frac{x_2}{2^\frac{1}{k}}$, which, together  with ~\eqref{21a3}, implies that that $x_2=\frac{2^\frac{1}{k}x_1}{\rho}$, so  $x_3=\frac{x_1}{\rho}$.
Now  eliminating  $x_0,x_2$ and $x_3$ from \eqref{21a2} and noting that $x_1>0$,
we have $\rho-\frac{\frac{(m-1)(m-2)}{m}}{\rho^{k-1}}-\frac{2}{\rho^{k-1}}=0$,
i.e., $\rho^k-\frac{(m-1)(m-2)+2m}{m}=0$, i.e.,  $\rho^k=m-1+\frac{2}{m}$.
It thus follows that  $\rho_{\mathcal{ABC}}\left(U_{m,2}^{(k)}\right)=\sqrt[k]{m-1+\frac{2}{m}}$.

Now, we prove the result.
It is trivial if $m=2$.  Suppose that $m\ge 3$.
Let $G$ be a $k$-uniform unicyclic hypergraph of size $m$ different from $U_{m,2}^{(k)}$.

Suppose that the girth of $G$ is $2$.
Let $u_1e_1u_2e_2u_1$ be the cycle of $G$. Note that there is no edge different from $e_1$ and $e_2$
containing two vertices in $e_1\cup e_2$.
If there is an edge containing no vertex in $e_1\cup e_2$,
or there are two edges one containing a vertex in  $e_1\setminus \{u_1,u_2\}$ and the other containing a vertex in $e_2\setminus \{u_1,u_2\}$,
then  for any edge $e$ of  $G$, we have
$\sum_{w\in e}d_w-k\leq m-1$.
So by Theorem \ref{LUpper}, $\rho_{\mathcal{ABC}}(G)\leq \sqrt[k]{m-1}<\rho_{\mathcal{ABC}}(U_{m,2}^{(k)})$.
Suppose that any edge different from $e_1$ and $e_2$
is a pendant edge at some vertex in $e_i$ with $i=1,2$, say in $e_1$.
Let $e_1=\{v_1,\ldots,v_k\}$, where $v_1=u_1$ and $v_k=u_2$.
Let $a_i$ be the number of pendant edges at $v_i$ for $1\leq i\leq k$. Then  $\sum_{i=1}^{k}a_i=m-2$.
Assume that $a_1\geq a_k\geq 0$ and $a_2\geq \dots\geq a_{k-1}\geq 0$.
Then $G\cong U_{m,k,2}(a_1,\ldots,a_k)$.
So by Lemma~\ref{unicyclic-2}, $\rho_{\mathcal{ABC}}(G)<\rho_{\mathcal{ABC}}(U_{m,2}^{(k)})$.

Suppose the girth of $G$ is at least $3$.
Then for any edge $e=\{i_1,\ldots, i_k\}$ in $G$, we have
$\sum_{w\in e} d_w-k\leq m-1$.
By Theorem \ref{LUpper}, we have $\rho_{\mathcal{ABC}}(G)\leq \sqrt[k]{m-1}<\rho_{\mathcal{ABC}}(U_{m,2}^{(k)})$.
We complete the proof.
\end{proof}


\begin{proof}[{\bf Proof of Theorem~\ref{abc9-7-1}}]
By the definition of $f(t)$ in~\eqref{eq9-7},
we have
\begin{align*}
f(\sqrt{m-1})&=(m-1)\left(\sqrt{m-1}-\frac{1}{\sqrt{2}}\right)-\frac{m^2-4m+5}{\sqrt{m-1}}+\frac{m^2-5m+6}{\sqrt{2}(m-1)}\\
&=2\sqrt{m-1}-\frac{3}{\sqrt{2}}-\frac{2}{\sqrt{m-1}}+\frac{\sqrt{2}}{m-1}\\
&>0\\
f(\sqrt{m-2})&=(m-2)\left(\sqrt{m-2}-\frac{1}{\sqrt{2}}\right)-\frac{(m-2)(m^2-4m+5)}{m-1}+\frac{m^2-5m+6}{\sqrt{2}(m-1)}\\
&<m^2-5m+6-\frac{(m-2)(m^2-4m+5)}{m-1}\\
&<0,\\
f(0)&=\frac{m^2-5m+6}{\sqrt{2}(m-1)}\\
&>0.
\end{align*}
Then all roots of $f(t)$ lie in $(-\infty, 0)$, $(0,\sqrt{m-2})$ and $(\sqrt{m-2},\sqrt{m-1})$, respectively.
So $\sqrt{m-2}<a_m<\sqrt{m-1}$.
It follows from Theorem~\ref{power} that $\sqrt[k]{m-2}<\rho_{\mathcal{ABC}}(U_{m,3}^k)<\sqrt[k]{m-1}$.

Now, we prove the result.
It is trivial if $m=3$.  Suppose that $m\ge 4$.
Let $G$ be a linear $k$-uniform unicyclic hypergraph of size $m$ different from $U_{m,3}^{k}$.

As $G$ is linear, its girth is at least three.
If the girth of $G$ is at least $4$,
then for any edge $e$ in $G$, we have
$\sum_{w\in e} d_{G}(w)-k\leq m-2$, so,
by Theorem \ref{LUpper}, we have $\rho_{\mathcal{ABC}}(G)\leq \sqrt[k]{m-2}<\rho_{\mathcal{ABC}}(U_{m,3}^{k})$.
Suppose that the girth of $G$ is $3$.
Let $u_1e_1u_ke_2u_{2k-1}e_3u_1$ be a cycle of length three in $G$, where $e_i=\{u_{(i-1)(k-1)+j}:j=1,\ldots, k\}$ for $i=1,2,3$, and $u_1=u_{3k-2}$.
Suppose that  there is an edge  containing no vertex in $e_1\cup e_2\cup e_3$,
or there are two edges, one  containing a vertex in $e_i\setminus \{u_{(i-1)(k-1)+1},u_{(i-1)(k-1)+k}\}$ and the other containing a vertex in  $e_j\setminus \{u_{(j-1)(k-1)+1},u_{(j-1)(k-1)+k}\}$, where $1\leq i<j\leq 3$.
Then  for any edge $e$ in $G$, we have
$\sum_{w\in e} d_{G}(w)-k\leq m-2$.
So by Theorem \ref{LUpper}, $\rho_{\mathcal{ABC}}(G)\leq \sqrt[k]{m-2}<\rho_{\mathcal{ABC}}(U_{m,3}^{k})$.
Suppose  that  each edge different from $e_1$, $e_2$ and $e_3$
is a pendant edge at some vertex in exactly one of $e_1,e_2,e_3$, say $e_1$.
Let $a_i$ be the number of pendant edges at $u_i$ for $1\leq i\leq k$. Then $\sum_{i=1}^ka_i=m-3$.
Assume that $a_1\geq a_k\geq 0$ and $a_2\geq \dots\geq a_{k-1}\geq 0$.
Then $G\cong U_{m,k,3}(a_1,\ldots,a_k)$.
So by Lemma~\ref{unicyclic-2}, $\rho_{\mathcal{ABC}}(G)<\rho_{\mathcal{ABC}}(U_{m,3}^{k})$.
\end{proof}

\section{Concluding remarks}

We propose the ABC tensor of a uniform hypergraph as a generalization of the ABC matrix of a graph. We give tight lower and upper bounds for the ABC spectral radius of a uniform hypergraph and characterize the hypergraphs that attain these bounds.  We determine  the maximum ABC spectral radii of uniform hypertrees,  uniform non-hyperstar hypertrees and  uniform non-power  hypertrees of given size, as well as the maximum ABC spectral radii of unicyclic uniform hypergraphs and linear unicyclic uniform hypergraphs of given size, respectively. We also characterize those uniform hypergraphs for which the maxima for the ABC spectral radii are actually attained in all cases.
We two examples to show that the case for the ABC spectral radius of $k$-uniform hypergraphs for $k\ge 3$ is quite different from the ordinary case with $k=2$.

A hyperpath is a hypertree with at most two pendant edges.
Denote by $P_{m,k}$ the $k$-uniform hyperpath with $m$ edges.
On one hand, we note from \cite{ZLG} that $P_{m,k}$ for $m\ge 2$ and $k\ge 2$ is the unique connected $k$-uniform hypertrees  that minimizes the spectral radius and  \cite{Chen3} that $P_{m,2}$ is the unique connected graph of $m$ edges that minimizes the ABC spectral radius.

%
%

A pendant vertex of a hypergraph is a vertex of degree one. Let $G$ be a $k$-uniform hypergraph with $u\in V(G)$ and $u_i\not\in V(G)$ for $i=2,\dots,k$. The hypergraph with vertex set $V(G)\cup\{u, \dots, u_k\}$ and edge set $E(G)\cup \{u, u_2, \dots, u_k\}$ is said to be obtained from $G$ by adding a new pendant edge $\{u, u_2, \dots, u_k\}$ at $u$.

\noindent
{\bf Example 7.1.}
Let $H_1$ be the $3$-uniform hypertree with $6$ edges obtained from $S_{2,3}$ by adding a new pendant edge at each
pendant vertex.
Let $V_1$ be the set of vertices of degree one of $H_1$. Let $V_2$ be the set of vertices of degree two that lie in a pendant edge of $H_1$.
Let $\mathbf{x}$ be the $k$-unit positive eigenvector corresponding to $\rho(H_1)$.
By Lemma~\ref{ABC-permutation}, the entry of $\mathbf{x}$ corresponding to each vertex in $V_i$ for $i=1,2$ is equal, which we denote by $x_i$.
Denote by $x_3$ the entry of $\mathbf{x}$ corresponding to the only vertex of degree two outside $V_2$.
Then
\[
\rho_{\mathcal{ABC}}(H_1) x_1^2=\left(\frac{1}{2}\right)^\frac{1}{3}x_1x_2,
\]
\[
\rho_{\mathcal{ABC}}(H_1) x_2^2=\left(\frac{1}{2}\right)^\frac{1}{3}x_1^2+\left(\frac{3}{8}\right)^\frac{1}{3}x_2x_3
\]
and
\[
\rho_{\mathcal{ABC}}(H_1) x_3^2=\left(\frac{3}{8}\right)^\frac{1}{3}x_2^2\cdot 2.
\]
So $\rho(H_1)$ is the largest root of the equation $f(t)=0$, where
\[
f(t)=t^3-\left(\frac{3}{4}\right)^\frac{1}{2}t^{\frac{3}{2}}-\frac{1}{2}.
\]
By Theorem~\ref{LUpper},  $\rho_{\mathcal{ABC}}(H_1)\geq 1$.
As \[f'(t)=3t^2-\frac{3}{2}\cdot\left(\frac{3}{4}\right)^\frac{1}{2}\cdot t^{\frac{1}{2}}=3t^{\frac{1}{2}}
\left(t^{\frac{3}{2}}-\frac{1}{2}\cdot\left(\frac{3}{4}\right)^\frac{1}{2}\right)>0\]
for $t\geq 1$,
$f(\rho)$ is  strictly increasing  for $t\in [1,+\infty)$.
As
\[
f(1)=\frac{1}{2}-\left(\frac{3}{4}\right)^\frac{1}{2}\approx -0.366025<0,
\]
and
\[
f\left(\sqrt[3]{2\cos^{2}\frac{\pi}{8}}\right)=2\cos^2\frac{\pi}{8}
-\left(\frac{3}{2}\right)^\frac{1}{2}\cdot \cos\frac{\pi}{8}-\frac{1}{2}\approx0.07559>0,
\]
there is a root of $f(t)=0$ in $\left(1,\sqrt[3]{2\cos^{2}\frac{\pi}{8}}\right)$.
It thus follows that $\rho_{\mathcal{ABC}}(H_1)<\sqrt[3]{2\cos^{2}\frac{\pi}{8}}=
\rho_{\mathcal{ABC}}(P_{6,3})$.

\noindent
{\bf Example 7.2.}
Let $H_2$ be the $4$-uniform hypertree with $12$ edges obtained from $S_{3,4}$ by adding a new pendant edge at each
pendant vertex.
Let $V_i$ be the set of vertices of degree $i$ of $H$, where $i=1,2,3$.
Let $\mathbf{x}$ be the $k$-unit positive eigenvector corresponding to $\rho(H_2)$.
By Lemma~\ref{ABC-permutation}, the entry of $x$ corresponding to each vertex in $V_i$ for $i=1,2,3$ is equal, which we denote by $x_i$. Then
\[
\rho_{\mathcal{ABC}}(H_2) x_1^3=\left(\frac{1}{2}\right)^\frac{1}{4}x_1^2x_2,
\]
\[
\rho_{\mathcal{ABC}}(H_2) x_2^3=\left(\frac{1}{2}\right)^\frac{1}{4}x_1^3+\left(\frac{1}{3}\right)^\frac{1}{4}x_2^2x_3
\]
and
\[
\rho_{\mathcal{ABC}}(H_2) x_3^3=\left(\frac{1}{3}\right)^\frac{1}{4}x_2^3\cdot 3.
\]
So $\rho_{\mathcal{ABC}}(H_2)$ is the largest root of the equation $f(t)=0$, where
\[
f(t)=t^4-\left(\frac{5}{8}\right)^\frac{1}{3}t^{\frac{8}{3}}-\frac{1}{2}.
\]
By Theorem~\ref{LUpper},  $\rho_{\mathcal{ABC}}(H_2)\geq 1$.
As \[f'(t)=4t^3-\frac{8}{3}\cdot\left(\frac{5}{8}\right)^\frac{1}{3}\cdot t^{\frac{5}{3}}=4t^{\frac{5}{3}}
\left(t^{\frac{4}{3}}-\frac{2}{3}\cdot\left(\frac{5}{8}\right)^\frac{1}{3}\right)>0\]
for $t\geq 1$,
$f(\rho)$ is  strictly increasing  for $t\in [1,+\infty)$.
As
\[
f(1)=\frac{1}{2}-\left(\frac{5}{8}\right)^\frac{1}{3}\approx -0.35499<0
\]
and
\[
f\left(\sqrt[4]{2\cos^{2}\frac{\pi}{14}}\right)=2\cos^2\frac{\pi}{14}
-\left(\frac{5}{2}\right)^\frac{1}{3}\cdot \cos^\frac{4}{3}\frac{\pi}{14}-\frac{1}{2}\approx0.08894>0,
\]
there is a root of $f(t)=0$ in $\left(1,\sqrt[4]{2\cos^{2}\frac{\pi}{14}}\right)$.
It thus follows that $\rho_{\mathcal{ABC}}(H_2)<\sqrt[4]{2\cos^{2}\frac{\pi}{14}}=
\rho_{\mathcal{ABC}}(P_{12,3})$.

\vspace{5mm}

\noindent {\bf Acknowledgement.} 
This work was supported by National Natural Science Foundation of China (Nos.~12071158 and~11801410).

\end{document}